\documentclass{amsart}
\usepackage{graphicx,amsmath,amssymb,amsfonts,amsthm}
\usepackage[all,cmtip]{xy}
\usepackage[colorlinks=false,hyperindex]{hyperref}
\usepackage{color}
\usepackage{fullpage}
\usepackage{tikz}
\usepackage{tikz-cd}

\newtheorem{theorem}{Theorem}[section]
\newtheorem{corollary}[theorem]{Corollary}
\newtheorem{lemma}[theorem]{Lemma}
\newtheorem{proposition}[theorem]{Proposition}
\newtheorem{conjecture}[theorem]{Conjecture}

\theoremstyle{definition}
\newtheorem{definition}[theorem]{Definition}
\newtheorem{example}[theorem]{Example}
\theoremstyle{remark}
\newtheorem{remark}[theorem]{Remark}
\numberwithin{equation}{section}

\newcommand{\ZZ}{\mathbb{Z}}

\newcommand{\RR}{\mathbb{R}}

\newcommand{\Krull}{\operatorname{Krull}}
\newcommand{\trdeg}{\operatorname{trdeg}}
\newcommand{\ord}{\operatorname{ord}}

\renewcommand{\AA}{\mathbb{A}}

\newcommand{\Spec}{\operatorname{Spec}}

\newcommand{\Ocal}{\mathcal{O}}

\newcommand{\supp}{\operatorname{supp}}
\newcommand{\Irr}{\operatorname{Irr}}

\newcommand{\IN}{\operatorname{in}}

\newcommand{\sat}{\operatorname{sat}}

\newcommand{\red}{\operatorname{red}}

\newcommand{\height}{\operatorname{ht}}

\newcommand{\tdet}{\operatorname{tdet}}
\newcommand{\lex}{\operatorname{lex}}

\newcommand{\BS}{\operatorname{BS}}
\newcommand{\out}{\operatorname{out}}
\newcommand{\fix}{\operatorname{fix}}
\newcommand{\move}{\operatorname{move}}
\newcommand{\rem}{\operatorname{rem}}

\newcommand{\defi}[1]{\textsf{#1}} 				
\newcommand{\taylor}[1]{{\color{blue} \sf $\clubsuit\clubsuit\clubsuit$ Taylor: [#1]}}

\title{The Dimension Conjecture Implies The Jacobi Bound Conjecture}
\author{Taylor Dupuy, David Zureick-Brown}
\date{\today}

\begin{document}

\maketitle
\begin{abstract}
	We prove that the Dimension Conjecture implies the Jacobi Bound Conjecture.
\end{abstract}
\tableofcontents

\section{Introduction}

This paper shows that the two central problems in the dimension theory of algebraic differential equations, the Dimension Conjecture and the Jacobi Bound Conjecture, are equivalent.
This paper supplies the direction that the Dimension Conjecture implies the Jacobi Bound Conjecture.
 
Both of these conjectures concern the behavior of solutions of nonlinear systems of ordinary algebraic differential equations in several dependent variables $x_1(t),x_2(t),\ldots,x_n(t)$, with coefficients in a field of meromorphic functions in a single complex variable $t$. These conjectures are based on intuitions about bounding the number and type of parameters needed for the ``general solution'' of an irreducible component of such a nonlinear system.
One type of parameter a solution can have is a dependence on an arbitrary function of a complex variable and another type of dependence a solution can have is on a constant of integration.

While we are speaking analytically for a moment to the general audience, we want to make clear that these conjectures answer these questions algebraically, not analytically, and the notions of \defi{differential dimension} $\dim^{\partial}(\Sigma)$ and \defi{dimension} $\dim(\Sigma)$ associated to a general nonlinear system $\Sigma$ are the ``correct'' definitions for what are classically called ``the number of arbitrary functions of a complex variable on which a solution depends'' and ``the number of constants of integration on which a solution depends''.
The invariant $\dim(\Sigma)$ is the algebraic replacement for ``the number of constants of integration'' while the invariant $\dim^{\partial}(\Sigma)$ is the algebraic replacement for ``the number of arbitrary functions of a complex variable''.

The Dimension Conjecture appears as Ritt's 10th Problem in his list of open problems in differential algebra \cite[Appendix]{Ritt1950}. 
Intuitively, it states that if a system of ordinary differential equations has fewer equations than dependent variables, then the a general solution of any component must depend on at least one free meromorphic function of the complex variable $t$. 
In the rigorous algebraic formulation this translates into a statement about differential dimensions.

The Jacobi Bound Conjecture appears as Ritt's 11th Problem in \cite{Ritt1950} and as Kolchin's Problem 3B in Kolchin's \emph{Some Problems in Differential Algebra} which accompanied his 1968 ICM address \cite{Kolchin1968}.\footnote{There are three italicized problems appearing in section 3 of \cite{Kolchin1968}.
	We are referring to these as 3A, 3B, and 3C.
} 
The conjecture is based on a statement found in a manuscript of Jacobi from 1865 \cite{Jacobi1865,Jacobi2009} and deals with the situation where the number of dependent variables and the number of differential equations are equal: if 
$$u_1=0, \quad u_2=0, \quad \ldots, \quad u_n=0,$$ 
is a system of nonlinear differential equations in $n$ dependent functions of a single complex variable $t$, call them $x_1(t),x_2(t),\ldots,x_n(t)$, then number of constants of integration required for a general solution $(x_1(t),x_2(t),\ldots,x_n(t))$ of an irreducible component of finite dimension is bounded by an explicit constant called the \defi{Jacobi bound} $J(u_1,\ldots,u_n)$ given by
\begin{equation}\label{E:jacobi-bound}
	J(u_1,\ldots,u_n) = \max_{\rho \in S_n} \sum_{i=1}^n \ord_{x_i}^{\partial}(u_{\rho(i)}).
\end{equation}
Here $S_n$ is the symmetric group of permutations of $\lbrace 1,2,\ldots,n\rbrace$ and $\ord_{x_i}^{\partial}(u_j)$ is the largest derivative of $x_i$ appearing in equation $u_j$.

Later, Ritt in 1935 \cite{Ritt1935} realized that Jacobi's statement had no rigorous definition of ``number of constants of integration''\footnote{Jacobi and Ritt themselves used the word ``order'' for this undefined notion of dimension. Also Jacobi had no notion of components and never used the words ``Jacobi bound''. 
A close account of what Jacobi did and didn't do in modern language can be found in Ollivier's wonderful paper \cite{Ollivier2022}.
} and provided a definition based on transcendence degrees. 
This is the invariant $\dim(\Sigma)$ discussed previously.
In his paper \cite{Ritt1935}, Ritt declared this an open problem and proved several cases about the conjecture.

As is common practice in differential algebra, we now pass to general differential fields and work algebraically to give our formal statements. Let $(K,\partial)$ be a differential field of characteristic zero. 
Denote by $K\lbrace x_1,\ldots,x_n\rbrace$ the ring of differential polynomials over $K$. 

\begin{conjecture}[Jacobi Bound Conjecture {\cite{Kolchin1968}}]\label{C:jbc-intro}
	Let $K$ be a differential field of characteristic zero. 
	Let $u_1,u_2,\ldots,u_n\in K\lbrace x_1,\ldots,x_n\rbrace$.
	Let $\Sigma=\Spec K\lbrace x_1,\ldots,x_n \rbrace/[u_1,\ldots,u_n].$
	Let $\Sigma_1$ be an irreducible component of $\Sigma$.
	Suppose that $\dim_K^{\partial}(\Sigma_1)=0$.
  Then $\dim_K(\Sigma) \leq J(u_1,\ldots,u_n)$.
\end{conjecture}
The Jacobi Bound $J(u_1,\ldots,u_n)$ given in equation \eqref{E:jacobi-bound} and which appears in Conjecture~\ref{C:jbc-intro} comes in two forms: a \defi{weak bound} and \defi{strong bound}. 
Accordingly, Jacobi Bound Conjecture is called \defi{weak} or \defi{strong} according to whether we are using the weak or strong conventions for $\ord_{x_i}^{\partial}$ (these give the weak and strong bounds respectively); the \defi{weak convention} sets $\ord^{\partial}_y(f)=0$ if neither $y$ nor any of its derivatives appear in the differential polynomial $f$ while the \defi{strong convention} sets $\ord_y^{\partial}(f)=-\infty$ in such a situation (see Section~\ref{S:notation}).

Both the weak and strong Jacobi Bound Conjectures are open and the strong Jacobi Bound Conjecture implies the weak Jacobi Bound Conjecture simply because the strong Jacobi bound is less than or equal to the weak Jacobi bound.

\begin{conjecture}[Dimension Conjecture {\cite{Cohn1983}} {\cite[pg 178]{Ritt1950}}\footnote{Cohn doesn't mention Ritt's formulation and attributes it to Lando so it may be an independent formulation of this conjecture.}]\label{C:original}
	Let $K$ be a differential field of characteristic zero. 
	Let $u_1,\ldots,u_m \in K\lbrace x_1,\ldots,x_n\rbrace$. 
	Let 
	$$ \Sigma = \Spec K\lbrace x_1,\ldots,x_n\rbrace/[u_1,\ldots,u_m].$$
	If $m<n$, then for every irreducible component $\Sigma_1$ of $\Sigma$ we have $\dim_K^{\partial}(\Sigma_1)\geq n-m$.
\end{conjecture}

The strong version of the Jacobi Bound Conjecture is known to imply the Dimension Conjecture \cite{Cohn1983}.\footnote{
Suppose strong JBC and consider $[u_1,\ldots,u_m] \subset K\lbrace x_1,\ldots,x_n\rbrace$ with $m<n$. 
Let $P$ be a minimal prime above $[u_1,\ldots,u_m]$. 
Extend the system by declaring $u_{m+1}=u_{m+2}=\cdots=u_n=0$. 
Then $J(u_1,\ldots,u_n)=-\infty$ and if $\kappa(P)$ was finite dimensional by JBC we have $\trdeg_K(\kappa(P))<-\infty$ which is impossible. 
This proves $\trdeg_K^{\partial}(\kappa(P))>0$. 
This form of the dimension conjecture is equivalent to the form we stated.
\label{footnote:JBC-implies-DC} } We prove the converse.

\begin{theorem}
  Suppose that $K$ is a differentially closed field.
  Suppose that the Dimension Conjecture is true. 
  Then the Jacobi Bound Conjecture is true.
\end{theorem}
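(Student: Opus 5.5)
The plan is to reduce the Jacobi Bound Conjecture to the case of linear systems, where the bound is classical, and to use the Dimension Conjecture to control the passage from nonlinear to linear. Fix the data of Conjecture~\ref{C:jbc-intro}: a minimal prime $P \supset [u_1,\ldots,u_n]$ with $\dim^{\partial}_K(\Sigma_1)=0$. Let $\eta$ be a generic point of $\Sigma_1$, which exists because $K$ is differentially closed. Put $d=\trdeg_K \kappa(P)$; this is the quantity to be bounded by $J(u_1,\ldots,u_n)$.

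\textbf{Step 1 (linear case).} First I would recall the linear case. Consider a system $L_i=\sum_{j} \ell_{ij}(\partial) x_j$ of linear differential operators over a differential field. Suppose its solution space is finite dimensional, equivalently the operator matrix is nondegenerate. Then that dimension equals the degree (in $\partial$) of the Dieudonn\'e determinant. This degree is bounded by $\max_{\rho}\sum_i \ord(\ell_{\rho(i) i})$ by a Kőnig-type expansion argument. This is the Jacobi bound for the linearized system, and it is at most $J(u_1,\ldots,u_n)$ under either convention, since linearization does not raise orders.

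\textbf{Step 2 (comparison with the linearization).} Next I would pass to the linearization $L_i=\sum_{j,k} \frac{\partial u_i}{\partial x_j^{(k)}}(\eta)\, x_j^{(k)}$ at $\eta$. Its solution space contains the tangent space to $\Sigma_1$ at $\eta$, which has dimension at least $d$. If $L$ is nondegenerate, Step 1 gives $d\le J$. Hence everything reduces to the degenerate case: $\eta$ is a ``singular'' generic point, meaning the Jacobian operator matrix of $(u_1,\ldots,u_n)$ drops rank along $\Sigma_1$. This happens for non-quasi-regular components, such as embedded-like components and components of the singular locus.

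\textbf{Step 3 (main obstacle: removing degeneracy via DC).} I expect this step to be the main obstacle. I would try a deformation. Introduce new differential indeterminates $y_1,\ldots,y_n$ and consider the system $u_i(x)-y_i=0$ in $K\{x,y\}$, or alternatively a perturbation $u_i+\epsilon v_i$ with $v_i$ generic of orders dominated by the Jacobi matrix. Then show that $\Sigma_1$ is a specialization of a component $\widetilde\Sigma$ of the family. On $\widetilde\Sigma$ the generic fibre is quasi-regular, so Step 2 applies there and yields fibre dimension at most $J$.

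The Dimension Conjecture enters to guarantee that every component of the total space has the expected differential dimension, namely exactly $n$ over $K$, and not more. It also guarantees that intersecting with the $n$ equations $y_i=y_i(\eta)$ (fewer equations than variables in the appropriate intermediate stages) cannot cut out a component of excess differential dimension. Consequently $\Sigma_1$ lies in the closure of a family of zero-differential-dimension fibres.

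\textbf{Step 4 (specialization).} Finally I would use upper semicontinuity of $\trdeg$ in such families, together with the fact that over a differentially closed $K$ one can choose the specialization point generically, to transfer the bound $\le J$ from the generic fibre to $\Sigma_1$. The delicate point is ensuring that $\Sigma_1$ is not an isolated excess component of the special fibre. It is exactly there that the lower bound on differential dimension supplied by the Dimension Conjecture must be invoked.
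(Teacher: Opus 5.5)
Your Steps 1--2 hold as far as they go. At the generic point of $\Sigma_1$, the $D$-module presented by the linearized system surjects onto $\Omega_{\kappa(P)/K}$, which has $\kappa(P)$-dimension $d$. So when the Jacobian operator matrix has full rank you do get $d\le\deg\operatorname{Ddet}(L)\le J(u_1,\ldots,u_n)$. But this covers only the quasi-regular case. The whole difficulty of the conjecture lies in the non-quasi-regular components you defer to Steps 3--4, and there the argument has a genuine gap.

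\emph{Specialization runs the wrong way.} Fibre dimension is \emph{upper} semicontinuous: components of a special fibre can be \emph{larger} than the generic fibre (think of the exceptional fibre of a blow-up). So a bound $\le J$ on the generic fibre of $u(x)=y$, or of $u+\epsilon v$, says nothing about a component of the fibre over $0$. For $u(x)=y$ the total space is the graph of $x\mapsto u(x)$, hence irreducible and isomorphic to $\AA^n_{\infty}$. Thus ``$\Sigma_1$ lies in the closure of the generic fibre'' holds automatically and carries no information about $\dim\Sigma_1$. Singular components are exactly where such a jump must be excluded, and excluding it is the content of the conjecture.

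\emph{The Dimension Conjecture is misapplied.} Read directly, it gives only \emph{lower} bounds on differential dimension, so it cannot forbid ``excess'' components as you claim. And for $u(x)=y$, the fact that the total space has differential dimension $n$ needs no conjecture at all. You flag the decisive point yourself (that $\Sigma_1$ is not an excess or vertical piece of the special fibre) but supply no mechanism for it. Also, differential closedness does not give you a generic point; generic points live in $\kappa(P)$, not in $K$.

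The missing idea is to place $\Sigma_1$ \emph{inside} a finite-dimensional component of a \emph{different} system of $n$ equations, with smaller order matrix and no larger Jacobi number. Then plain containment bounds $\dim\Sigma_1$ by the dimension of that component, and an induction applies. The paper does this by a double induction, on $n$ and on order matrices in Ritt's ordering:
\begin{enumerate}
\item It brings the order matrix into Ritt's first or second form (the latter via Hall's theorem, Proposition~\ref{P:second-form}) and performs a Ritt $\partial$-division that provably does not raise $J$ (Propositions~\ref{P:ritt-first} and~\ref{P:ritt-second}).
\item If the separant $s_1\notin P$, then $P$ stays a minimal prime (Lemma~\ref{L:minimal-prime-after-division}).
\item If $s_1\in P$, it uses the Ritt pencil $[t_1+ys_1,u_2,\ldots,u_n]$, for which $\Sigma_1\subset\BS(\Gamma)\subset\Sigma$. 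Fixed components equal $\Sigma_1$ (Theorem~\ref{T:fixed}).
\item For a moving component, the Dimension Conjecture is turned into an \emph{upper} bound through the chain $P_{n-1}\subset\cdots\subset P_0=P$. Differential dimension starts at $n$, ends at $0$, and drops by at most one per equation, so it drops by exactly one at each step (Proposition~\ref{P:behavior-of-intermediates}). This forces the moving component to have differential dimension $1$ (Theorem~\ref{T:moving-dimension}).
\item That produces relations $M_i(y,x_i)$. The differential Nullstellensatz over the differentially closed $K$ then yields $\mu\in K$ and a finite-dimensional component $V(\widetilde P)\supseteq\Sigma_1$ of $[t_1+\mu s_1,u_2,\ldots,u_n]$, to which the inductive hypothesis applies (Theorem~\ref{T:moving}).
\end{enumerate}
Nothing in your outline plays the role of this containment-plus-induction step.
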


\subsection{Strategy of proof}

An inductive approach to the Jacobi Bound Conjecture is the following. 
Let $\Sigma \subset \AA^n_{\infty}$ be a system given by 
$$ \Sigma \colon \quad u_1=0, \quad u_2 =0, \quad \cdots, \quad u_n=0 $$
where $u_1,\ldots,u_n \in K\lbrace x_1,\ldots,x_n\rbrace$, and let $P \supset [u_1,\ldots,u_n]$ be a minimal prime, corresponding to an irreducible component $\Sigma_1 \subset \Sigma$.
One would like to replace (or modify) this with a simpler system 
$$ \Sigma' \colon \quad v_1=0, \quad u_2 =0, \quad \cdots, \quad v_n=0 $$
such that $J(v_1,\ldots,v_n)\leq J(u_1,\ldots,u_n)$, and then induct. 

To carry out the induction, one would like $P$ to still be a minimal prime of the new system.
A natural (but not the only) way to do this is via Ritt division (Definition \ref{D:ritt-division}).
For example, if $u_2$ is ``lower'' than $u_1$ (e.g., if the highest derivative of $x_1$ occurring in $u_1$ is larger than the highest derivative of $x_1$ occurring in $u_2$),
Ritt division gives 
  \begin{equation}
  \label{eq:mini-separant}
su_1=Q(u_2)+v_1,	  
\end{equation}
where $s, v_1 \in K\lbrace x_1,\ldots,x_n\rbrace$, $Q$ is a differential operator, and $v_1$ is lower than $u_1$ (in the sense of Definition \ref{D:ranking}).

Then, it is natural to replace the system $[u_1,u_2,\ldots,u_n]$ with $[v_1,u_2,\ldots,u_n]$.
Since $u_1,Q(u_2) \in P$, it is also true that $v_1 \in P$, and thus $P \supset [v_1,u_2,\ldots,u_n]$.
\medskip

\noindent \underline{\emph{Main issues}}: to induct from here we would need the following to be true:
\begin{enumerate}
\item $J(v_1,u_2,\ldots,u_n) \leq J(u_1,u_2,\ldots,u_n)$, and 
\item $P \supset [v_1,u_2,\ldots,u_n]$ is still a minimal prime.
\end{enumerate}
Neither of these are true in general, and the bulk of this paper deals with these two issues.
\medskip  

\noindent \underline{\emph{Adapting Ritt's Proof of the Linear Case To The Nonlinear Situation}}:
Since $v_1$ is ``lower'' than $u_1$, one might expect that the Jacobi number does not increase. This is quite false; see e.g.~Example \ref{E:J-increasing-second-form}. 

Revisiting Ritt's proof of the linear case of JBC \cite[pg 310]{Ritt1935}, we see that Ritt is running a similar ``reduction algorithm'', but with big restrictions on the allowed divisions.
In particular, Ritt identified two situations in the linear case where one can perform a Ritt division without increasing the Jacobi number. 
In the present paper we explain how to adapt these two procedures to the nonlinear case unconditionally and fix a large gap in Ritt's proof. 
See Propositions~\ref{P:ritt-first} and~\ref{P:ritt-second}.
We call the two situations \defi{Ritt's first form} and \defi{Ritt's second form} (Definition~\ref{D:ritt-forms}).
\medskip

\noindent \underline{\emph{Shifting components}}:
The second issue, more generally, is that when modifying the equations defining a system, the irreducible components of the new system can change.
In particular, while we only make manipulations such that $P$ is still a prime of the new system, $P$ is generally no longer a \emph{minimal} prime of the new system.
\smallskip

This is a much more serious obstacle than it might seem at first.
\smallskip

An easy case is handled by Lemma \ref{lemma:nonvanishing-separants}: if the separant $s$ (i.e., the ``$s$'' from Equation \eqref{eq:mini-separant}) does not vanish generically on $\Sigma_1$ (i.e., $s \not \in P$), then all is well: our operation is ``locally invertible'', and $P$ is still a minimal prime of the new system. Linear systems of differential equations satisfy Lemma \ref{lemma:nonvanishing-separants} (since their separants are constants), which allows us to recover Ritt's proof of the linear case of JBC \cite[pg 310]{Ritt1935} (see Section \ref{S:linear-case}).
\medskip

\noindent \underline{\emph{Degenerate Situations}}:
Much more difficult is the case when $s \in P$ (i.e., the separant vanishes generically on the component in question). We call this a \defi{degenerate situation} (Definition~\ref{D:degenerate}).  While $P$ may no longer be minimal, it still contains \emph{some} minimal prime $P \supset P' \supset [v_1,\ldots,v_n]$ of the new system, with corresponding irreducibles $\Sigma_1 \subset \Sigma'$, whence $\dim \Sigma_1 \leq \dim \Sigma'$.
At first glance it seems sufficient to induct using $P'$; i.e., JBC should still imply that $\dim P' \leq J(v_1,\ldots,v_n)$, which would give
$$\dim \Sigma_1 \leq \dim \Sigma'\leq J(v_1,\ldots,v_n) \leq J(u_1,\ldots,u_n).$$
The persistent issue in many arguments about JBC is that there is no guarantee that $P'$ is finite dimensional (which is part of the hypothesis of JBC). 

There are only a few cases (that we know of) where one can prove finiteness of $\dim \Sigma'$ directly.
The ``easiest'' case is $n = 1$ (see Lemma \ref{L:one-variable-case}).

Another fairly straightforward case is when the order matrix is ``tropical upper triangular'', i.e., of the form			
				$$ A = \begin{pmatrix}
				a_{1,1} & a_{1,2} & \cdots & a_{1,n} \\
				-\infty & a_{2,2} & \cdots & a_{2,n} \\
				\vdots & \vdots & \ddots & \vdots \\
				-\infty & -\infty  & \cdots & a_{n,n}
			\end{pmatrix}. 
			$$
This case follows from a straightforward induction, similar to the base case of the inner induction in our proof of Theorem \ref{T:dc-implies-jbc}.
\medskip

\noindent \underline{\emph{Cylinder trick}}:
A much more exotic case (identified by Ritt in his proof of the two variable case of JBC \cite{Ritt1935}) is when $\Sigma_1$ is contained in an ``intersection of cylinders''.
By this, we mean that there exist single (differential) variable polynomials $M_1(x_1),\ldots,M_n(x_n) \in P$.
In this case, $\dim \Sigma_1$ is at most the dimension of the irreducible components of $[M_1(x_1),\ldots,M_n(x_n)]$, which are visibly finite dimensional (explained at the end of the proof of Theorem \ref{T:moving}).

It is not straightforward to manufacture such polynomials $M_i$.
Ritt cleverly does so in his proof of the two variable case of JBC in \cite{Ritt1935}.
First, instead of replacing $u_1$ with its remainder, we replace it with its separant and consider the system $[s,u_2,\ldots,u_n]$.
This has merits: since $s$ is ``visibly'' lower than $u_1$, 
\[
J(s,u_2,\ldots,u_n)\leq J(u_1,u_2,\ldots,u_n),
\]
and since $s \in P$ (by the hypothesis of our ``degenerate situation''), $P \supset [s,u_2,\ldots,u_n]$. 

Still, the issue remains that $P$ may no longer be a minimal prime. 
\medskip

\noindent \underline{\emph{Degenerations and Ritt's Pencil Trick}}:
To deal with this issue, Ritt introduces a degeneration. Instead of considering just $s$, he identifies a natural third polynomial $t$ such that $t \in P$ (see Equation \eqref{eq:ritt-pencil-s-t}). Then for any $\mu \in K$, $t + \mu s \in P$. Varying $\mu$ gives a family $[t + \mu s,u_2,\ldots,u_n]$ of differential ideals, all of which are contained in $P$. Ritt shows that for most choices of $\mu$, $P$ contains a minimal prime $P_{\mu}$ that is finite dimensional. The proof that $P_{\mu}$ is finite dimensional is via the above cylinder trick; i.e., Ritt is able to produce sufficient $M_i(x_i)'s$ in $P_{\mu}$.

We call this technique the \emph{Ritt Pencil Trick}.
\medskip

\noindent \underline{\emph{Generalizing Ritt's Pencil Trick}}:
One of the main contributions of this paper is that if we assume the Dimension Conjecture, then Ritt's pencil trick can be extended to arbitrarily many variables (with a lot of work) to similarly bypass such ``degenerate situations''.

This pencil is a differential algebraic pencil of differential subschemes of $\AA^n_{\infty}$, which we call the \defi{Ritt Pencil} $\Gamma$ (Definition~\ref{D:ritt-pencil}).
This is essentially the family of all of the different reasonable ways we can reduce the equations for $\Sigma_1$ to something lower, rather than just replacing an equation with its separant. 
It is a differential algebraic map
$$\Gamma \to \AA^1_{\infty} = \Spec K\lbrace y \rbrace.$$ 
Note that both sides are non-Noetherian and have infinite Krull dimension.

For the Ritt pencil (Figure~\ref{F:ritt-pencil}) we can show that the original component $\Sigma_1$ sits inside the base locus, and that the base locus is contained in our original differential algebraic variety $\Sigma$:
 $$ \Sigma_1 \subset \BS(\Gamma) \subset \Sigma.$$
Then we perform an analysis of the component of $\Gamma$ containing $\Sigma_1$. 
This component can be moving or fixed and we need to treat each of these cases separately. 
This is carried out in Sections~\ref{S:linear-series} and \ref{S:ritt-pencil}.
The key application of the dimension conjecture is to show that an appropriate component of a fiber $\Gamma_{\mu}$ has finite absolute dimension, again via the cylinder trick.

\begin{figure}[htbp!]
	\begin{center}
		\includegraphics[scale=0.7]{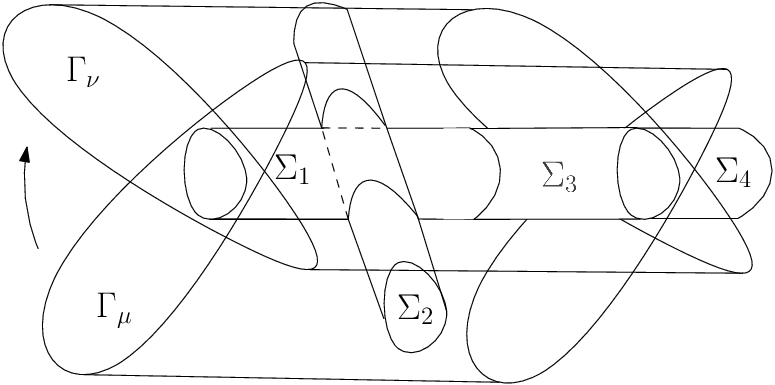}
	\end{center}
	\caption{A Ritt pencil. We have written $\Sigma$ as a union of irreducible component $\Sigma_1,\Sigma_2,\Sigma_3,\Sigma_4$. 
	The fibers $\Gamma_{\mu}$ and $\Gamma_{\nu}$ of the pencil $\Gamma$ have an intersection which contains a base locus. The base locus is contained in $\Sigma$ and contains $\Sigma_1$. }\label{F:ritt-pencil}
\end{figure}

To highlight the novelty of this, we remind the reader that, in differential algebraic geometry, we don't have projective spaces, we have no theory of line bundles to lean on, no cohomological vanishing theorems like Serre vanishing, we do not have flatness for these families, our rings are not finite type over a field, and they aren't even Noetherian. 

Of particular interest is the case where $\Sigma_1$ is contained in a component of $\Gamma$ that is moving.
Ritt's argument here is very specific to the two variable case, and his approach does not generalize to more variables. In Section~\ref{section:Dimension-Conjecture-Implies-Jacobi-Bound-Conjecture} we explain how to generalize this argument using the dimension conjecture together with some differential algebraic elimination theory. 
It is worth mentioning that at the time of \cite{Ritt1935}, the theory of characteristic sets was not developed; in particular, the elimination theory that we use was not available to him.
\medskip

Finally, we would like to highlight a number of things we have developed along the way.
\medskip

\noindent \underline{\emph{The Dimension Conjecture}}:
The statements of the Dimension Conjecture found in Ritt \cite[pg 178]{Ritt1950} and Cohn \cite[pg 1]{Cohn1983} are not the same. 
They also differ from the statement of the conjecture that we will want to use for our application, which closely resembles Krull's Principal Ideal Theorem (PIT) in Commutative Algebra. 
Since we could not find a reference that proves that these variants are equivalent, in Section \ref{S:dimension-conjecture} we supply a proof that the three versions of the dimension conjecture are equivalent to each other. 
In the same section we apply the PIT version to derive a statement we will need later in our analysis of moving components of the Ritt Pencil.
\medskip 

\noindent \underline{\emph{Ritt Cycle Trick}}: We have isolated a useful group-theoretic tool which Ritt uses which we call the ``Ritt cycle trick'' (Lemma \ref{lem:ritt-permutation}) which applies in the general nonlinear situation. 
	This is used to show that Jacobi bounds don't increase when performing certain Ritt divisions.
\medskip

\noindent \underline{\emph{A Gap in Ritt's Proof Of The Linear Case}}: Finally, while adapting Ritt's linear reductions to the nonlinear situation the authors identified and repaired a rather large gap in Ritt's original proof of the linear case from \cite{Ritt1935} (see also \cite{Ritt1950}). We also fix several minor gaps throughout his paper in our proofs of Propositions~\ref{P:ritt-first} and~\ref{P:ritt-second}).
	
	The fix in this paper appears over several pages as Proposition \ref{P:second-form}.
	The main gap is that he assumed that the order matrices of a certain shape can always be brought into a standard form which we call \defi{Ritt's Second Form}. 
	Fixing this gap was a lot of work, which surprisingly used a clever application of Hall's Matching Theorem from graph theory.
	Ritt writes at the end of the first paragraph of \cite[P.~309]{Ritt1935} 
	\begin{quote}
		``...that (c) can be realized is now evident.''
	\end{quote}
	where $(c)$ in loc.~cit.~is the statement that the matrix can be brought into Ritt's second form. No further explanation is given.

	The appearance of Hall's Matching Theorem is particularly interesting in that, while Ritt's work predates Hall's matching theorem, a variant called Egerv\'{a}ry's Theorem, which is equivalent to Hall's Matching Theorem \cite[Section 2.1]{Diestel2025}, appears in Ollivier's work on the Jacobi Bound Conjecture \cite[Section 2]{Ollivier2022} and in Cohn's work \cite[section 5]{Cohn1983}.  
	Ollivier uses Egerv\'{a}ry's Theorem in his algorithms for computing Jacobi bounds, and according to Ollivier in \cite{Ollivier2022},  Egerv\'{a}ry's Theorem  was also investigated by Jacobi in his original manuscript.
	It is unclear what our application to Proposition~\ref{P:second-form} has to do with these other applications;
  it is curious that this piece of combinatorics seems to have resurfaced in the context of the Jacobi Bound Conjecture in several different places.

  \subsection{The Rosenfeld--Gr\"{o}bner algorithm}

  The reduction process we exploit in this paper is similar to the Rosenfeld--Gr\"{o}bner algorithm \cite[Algorithm 1]{Golubitsky2007}.
  For a given system $[u_1,\ldots,u_n]$, this algorithm produces, for every minimal prime $P \supset [u_1,\ldots,u_n]$, a characteristic set\footnote{Technically  \cite[Algorithm 1]{Golubitsky2007} produces some slightly weaker autoreduced sets called ``regular differential systems'', but one can enhance the algorithm to produce characteristic sets; for the purposes of this introduction it doesn't matter. See \cite{Ovchinnikov2004}, \cite[See Algorithm 7.1]{Hubert2000}.}
(defined in Subsection \ref{ss:characteristic-sets}) $G=(g_1,\ldots,g_s)$ for $P$.
  
In our setting, it is interesting to consider a characteristic set for our given component $\Sigma_1$; in \cite[Corollary 1.8]{Dupuy2026a} we prove that if $G=(g_1,\ldots,g_n)$ is a characteristic set for $\Sigma_1$, then $\dim(\Sigma_1) = J(g_1,\ldots,g_n)$.
One could imagine a proof of JBC where one analyzes each step of the algorithm and hopes e.g.~that the Jacobi number never increases.

Unfortunately, a cursory initial investigation reveals that this approach seems doomed to fail.
There are several issues.
First, during the Rosenfeld--Gr\"{o}bner algorithm one can pass from a situation with $n$ equations to a situation with more than $n$ equations; the Jacobi Bound Conjecture doesn't apply to systems with more than $n$ equations in $n$ variables. 
Second, the algorithm involves Ritt divisions that are not in Ritts first or second form,
and even in the linear case, there exist examples where the Jacobi bound increases as Example~\ref{E:J-increasing} shows.

\begin{example}\label{E:J-increasing}
Consider the system of differential equations 
$$\begin{cases}
	u_1=x^{(100)}+y'+z',\\
	u_2=x^{(50)}+y+z, \\
	u_3=x'+y'+1.
\end{cases}$$
One can perform the Rosenfeld--Gr\"{o}bner algorithm and track the Jacobi bound of the resulting system after each division is performed.\footnote{See \url{https://github.com/tdupu/magma-diff-alg} for an open source Magma package.}  
The initial Jacobi bound is $J_0=101$ and we find subsequently at each stage that $J_1=150, J_2=101, J_3=101$, and thereafter it stabilizes at 101.
\end{example}

While the branching issue appears to be a fundamental problem, the increasing Jacobi bound issue appears to improve by passing to strong Jacobi bounds.
Here the order of a differential polynomial is declared to be $-\infty$ in a differential variable if the variable is absent from the differential polynomial (notation in \S\ref{S:notation}). 
In passing to the strong bound we are asking for a tighter inequality which should make the problem harder. Counterintuitively, the strong bounds make the induction more restrictive and hence easier to induct.
\begin{example}
	Consider the system $u=x'+y^{(18)}$, $v=(y')^2+y$. 
	Then we have order matrices with the weak and strong conventions given by 
	$$ \begin{pmatrix}
		1 & 18 \\ 0 & 1
	\end{pmatrix}, \quad \begin{pmatrix} 1 & 18 \\ -\infty & 1\end{pmatrix}. $$
	In the first case the Jacobi bound is 18 while in the second case the presence of $-\infty$ in the first column forces the Jacobi bound to be 2. 
\end{example}

Unfortunately, even passing to the strong bound does not fix the situation with reductions increasing the Jacobi Bound. In Example~\ref{E:J-increasing-second-form} we provide more examples where division can increase the Jacobi number, even in the strong case (we performed many Magma experiments along these lines).
This pretty much dooms the approach of tracing through the divisions in the Rosenfeld--Gr\"{o}bner algorithm.

\subsection{Acknowledgements}
Dupuy is supported by National Science Foundation grant DMS-2401570.
Zureick-Brown is supported by National Science Foundation grant DMS-2430098.

Dupuy is grateful for the hospitality of the GAATI laboratory at the l'Université de la Polynésie française, where he completed this manuscript.

\subsection*{Software}
The authors used Magma \cite{Magma}, including a differential algebra package \cite{dupuyZB:magma-package}, at various points in this project. 

GPT5  \cite{OpenAI2026} was used to perform literature searches and provide numerical examples, proofs and verifications of individual claims in the paper, and initial code for the figures used. 
However, the final arguments in this paper are human-generated.

Significantly, the idea to use Hall's Matching Theorem to resolve the gap in Ritt's proof of the linear case of the Jacobi Bound Conjecture (Proposition \ref{P:second-form}) was assisted by GPT5.
We gave an unsatisfying proof of a small subcase to GPT5, and it responded with a very clean proof of the same special case using Hall's Matching Theorem.
GPT5 was not able to give a full proof of Proposition \ref{P:second-form}, but after a lot of additional work we were able to see how to apply Hall's Matching Theorem to give a full proof.

\section{Background}

In this paper, when $(\Gamma,<)$ is an ordered set we write $<_{\lex}$ for the induced \defi{lexicographic order} on $\Gamma^n$. 

\subsection{Notation and Conventions For Schemes}

For a scheme $X$ we will let $\Irr(X)$ denote the irreducible components of $X$. 
By $\Irr^0(X)$ we mean the minimal irreducible components. 
If $X = \Spec(A)$ and $\sqrt{\langle 0 \rangle_A} = \bigcap_{i \in I} P_i$ is an irredundant intersection of minimal prime ideals, then we can write $Q_i = \sat_{P_i}( \langle 0 \rangle_A)$ for the associated $P_i$-primary component, in which case $\Irr^0(X) = \lbrace V(Q_i) \colon i \in I \rbrace$. 
In this paper we only deal with minimal irreducible components which correspond to minimal associated primes of ideals and not embedded primes.
Equivalently, these are components of the underlying reduced scheme, and algebraically, these are minimal primes of the underlying radical ideal.

If $X_1$ is an irreducible component of $X$, then we will let $\kappa(X_1)$ denote the function field of $(X_1)_{\red} $ the underlying reduced irreducible scheme (algebraically, if $X=\Spec(B)$, then $X_{\red} = \Spec(B/\sqrt{\langle 0 \rangle_{B}})$). 
Again, if $R$ is a ring and $P$ is a prime ideal we will let $\kappa(P)$ denote the fraction field of $R/P$. 
If $\Spec(R)$ is irreducible we may also write $\kappa(R)$ for $\kappa(\Spec(R))$.

We can now define the dimension either in terms of transcendence degrees or Krull dimension. 
These two notions coincide for schemes which are finite type over a field. 
Our rings, in general, are countably generated and therefore are generally not Noetherian. 
Since the rings we care most about have localizations which are finite type \cite[Proposition 3.2]{Dupuy2026a} we define our dimension via transcendence degrees.
\begin{definition}
Let $X$ be a scheme over a field $K$. 
The \defi{dimension} of $X$ over $K$ is  
$$\dim_K(X) = \max \lbrace \trdeg_K(\kappa(X_1))\colon X_1 \in \Irr^0(X) \rbrace.$$ 
\end{definition}
We will often omit the $K$ and just write $\dim(X)$. 

\begin{remark} 
  Warning: if $R$ is a domain and $P$ is a prime ideal, then $\dim(\Spec(R_P)) = \dim(\Spec(R))$, but $\Krull(\Spec(R_P)) \neq \Krull(\Spec(R))$ in general. 
  In particular, our definition of dimension does not generally coincide with Krull dimension.
\end{remark}

\subsection{Notation and Conventions for Differential Schemes}
We follow \cite[Ch3]{Buium1994} for our terminology in differential algebraic geometry. 
Let $(K,\partial)$ be a differential field. 
\begin{definition}
	By a \defi{$D$-scheme} over a differential field $K$ we mean a scheme $X$ such that its structure sheaf $\Ocal_X$ is a sheaf of differential $K$-algebras.
\end{definition}
We will use the words \defi{differential scheme} and \defi{$D$-scheme} interchangeably. 
A morphism of $D$-schemes over $K$ is a morphism of schemes over $K$ over such that the induced map of sheaves is a morphism of sheaves of differential $K$-algebras. 
Usual adjectives for objects and morphisms in algebraic geometry (e.g.~reduced, projective, affine, surjective etc.) apply to the underlying scheme or morphism of schemes.

If $\Sigma$ is an affine $D$-scheme of $\partial$-finite type defined by an ideal $I \subset K\lbrace x_1,\ldots,x_n\rbrace$, then by an irreducible component $\Sigma_1\in \Irr^0(\Sigma)$ we mean $\Sigma_1 = V(Q_1)$, where $Q_1=\sat_{S_{P_1}}(I)$ is the saturation of $I$ by $S_{P_1}$, where $S_{P_1}$ is the multiplicative set associated to a differential prime $P_1 \supset I$ minimal above $I$.  This is automatically a $D$-scheme since the localization map $K\lbrace x_1,\ldots,x_n\rbrace/I \to S_{P_1}^{-1}( K\lbrace x_1,\ldots,x_n\rbrace/I )$ is a morphism of differential $K$-algebras and its kernel is $Q_1$. 
For a differential ring $R$ we let $\Spec^{\partial}(R)$ denote the collection of differential prime ideals. 

If $\Sigma$ is an irreducible $D$-scheme, then $\kappa(\Sigma)$ is a $\partial$-field. 
In particular, if $P$ is a prime ideal in a $\partial$-ring $R$, then $\kappa(P)$ is a $\partial$-field. 

\begin{definition}
	Let $R$ be a differential field.
	We say that a differential $R$-algebra $A$ is \defi{differentially of finite type} if there exists an integer $m$ and a surjective morphism of differential $R$-algebras $R\lbrace x_1,\ldots, x_m\rbrace \to A$. 
\end{definition}
We will work over fields and simply write ``$A$ is $\partial$-finite type'' when $A$ is $\partial$-finite type over a differential field $K$.

\begin{definition}
	Let $R$ be a differential $K$-domain. 
	The \defi{differential transcendence degree} of $R$ over $K$, denoted by $\trdeg^{\partial}_K(R)$, is the largest $n$ such that there exists an injection of differential $K$-algebras $K\lbrace x_1,\ldots,x_n\rbrace \to R$. 
\end{definition}

\begin{definition}
	For a $D$-scheme $\Sigma$ over a differential field $K$ which is locally of $\partial$-finite type we define the \defi{differential dimension} by
	$$\dim^{\partial}_K(\Sigma) = \max \lbrace \trdeg_K^{\partial}(\kappa(\Sigma_1))\colon \Sigma_1 \in \Irr^0(\Sigma) \rbrace.$$ 
\end{definition}
\noindent The dimension $\dim_K(\Sigma)=\dim(\Sigma)$ will just be the dimension of the underlying scheme (defined via transcendence degrees of components).

\begin{definition}
	By a \defi{differential point} of a $D$-scheme $\Sigma$ we mean a morphism of $D$-schemes $\Spec(R) \to \Sigma$ where $R$ is a differential ring and $\Spec(R)$ is given its $D$-scheme structure. 
\end{definition}
\noindent Differential points correspond to solutions of differential equations in differential rings.
\\

\noindent Given $n\geq 0$ we will view the symbol $\AA^n_{K,{\infty}}$ as a $D$-scheme over $K$ so that 
$$ \AA^n_{K,{\infty}} = \Spec(K\lbrace x_1,\ldots,x_n \rbrace).$$
This is the infinite jet bundle (also called the arc space) of $\AA^n_K=\Spec K[x_1,\ldots,x_n]$.
For a scheme $X$, the infinite jet bundle is denoted $\mathcal{L}(X)$ \cite{Cluckers2011}, $X_{\infty}$  \cite{Fernex2015}, or $J^{\infty}(X)$ or $X^{\infty}$ \cite{Buium1994}.
If $R$ is differential $K$-algebra and if $\AA^n_{K,{\infty}}(R)$ denotes set of morphisms of $D$-schemes $\Spec(R)\to \AA^n_{\infty}$, then $\AA^n_{K,{\infty}}(R)=R^n$.
Because of this we think of $\AA^n_{K,{\infty}}$ as just affine space in the differential algebraic setting.

\subsection{The Jacobi Bound Conjecture}\label{S:notation}

Let $K$ be a differential field of characteristic zero. 
Let $K\lbrace x_1,\ldots,x_n\rbrace$ be the ring of differential polynomials over $K$. 

\begin{definition}
	Let $u \in K\lbrace x_1,\ldots,x_n\rbrace$. We define the \defi{order} of $u$ by
	$$ \ord_{x_j}^{\partial}(u) = \max \lbrace r \in \ZZ_{\geq 0}  \colon \partial u/\partial x_j^{(r)} \neq 0  \rbrace.$$
	In the \defi{strong convention} the maximum of the empty set is defined to be $-\infty$ and in the \defi{weak convention} the maximum of the empty set is defined to be $0$. 
\end{definition}

\begin{definition}
	Let $u_1,u_2,\ldots,u_n\in K\lbrace x_1,\ldots,x_n\rbrace$ be differential polynomials. 
	The \defi{order matrix} of $(u_1,\ldots,u_n)$ is the matrix $A = (a_{i,j})_{1\leq i \leq n, 1\leq j \leq n}$ where $a_{i,j} = \ord_{x_j}^{\partial}(u_i)$.
\end{definition}

\begin{definition}
  A \defi{transversal} of an $n\times n$ matrix $A$ is a sum of the form $\sum_{i=1}^n a_{i,\rho(i)}$, where $\rho \in S_n$.
  \end{definition}

\begin{definition}
	Let $u_1,u_2,\ldots,u_n\in K\lbrace x_1,\ldots,x_n\rbrace$ be differential polynomials. 
	We define the \defi{Jacobi bound} $J(u_1,\ldots,u_n)$ to be $\tdet(A)$, where $\tdet$ denotes the tropical determinant of the matrix $A \in M_{n}(\ZZ_{\geq 0} \cup \lbrace - \infty \rbrace)$.
	It is given by $\tdet(A) = \max_{\rho \in S_n } \sum_{i=1}^n a_{i,\rho(i)}$, i.e., a maximal transversal.
  \end{definition}

We call the Jacobi bound \defi{weak} or \defi{strong} depending on whether we are using the weak or strong convention for the order of differential polynomials with respect to a variable.  
\begin{conjecture}[Jacobi Bound Conjecture]\label{C:jbc}
	Let $K$ be a differential field of characteristic zero. 
	Let $u_1,u_2,\ldots,u_n\in K\lbrace x_1,\ldots,x_n\rbrace$.
	Let $\Sigma=\Spec K\lbrace x_1,\ldots,x_n \rbrace/[u_1,\ldots,u_n]$.
	Let $\Sigma_1$ be an irreducible component of $\Sigma$.
	If $\dim(\Sigma_1)<\infty$, then $\dim(\Sigma_1)<J(u_1,\ldots,u_n)$. 
\end{conjecture}
The Jacobi Bound Conjecture is called \defi{weak} or \defi{strong} according to whether we are using the weak or strong conventions for the Jacobi bound.
Both the weak and strong Jacobi Bound Conjectures are open and the strong Jacobi Bound Conjecture implies the weak Jacobi Bound Conjecture simply because the strong Jacobi bound is less than or equal to the weak Jacobi bound. 

\subsection{Characteristic Sets}
\label{ss:characteristic-sets}

\begin{definition}
A term ordering $\prec$ on the infinite polynomial ring $K\lbrace x_1,\ldots,x_n\rbrace$ is called a  \defi{ranking} if the following properties hold:
\begin{enumerate}
	\item For every $i$ with $1\leq i \leq n$ and every $r$ with $r\geq 0$ we have $x_i^{(r+1)}\succ x_i^{(r)}$. 
	\item For every $i,j$ with $1 \leq i, j \leq n$ and $r,s\geq 0$ if $x_i^{(r)} \succ x_j^{(s)}$, then $x_i^{(r+1)} \succ x_j^{(s+1)}$.
\end{enumerate} 
\end{definition}
We say a ranking $\prec$ is an \defi{elimination ordering} if there exists a partition of the $X = \lbrace x_1,\ldots,x_n\rbrace = Y \amalg Z$ such that for all $y \in Y$, for all $z \in Z$, and for all $r,s\geq 0$, we have $y^{(r)} \prec z^{(s)}$. 
This is the same as for usual polynomial ring except that our blocks are infinite.
This ordering has the property that if $f\in K\lbrace X \rbrace$ has its leading monomial in $K\lbrace Y \rbrace$ then $f \in K\lbrace Y \rbrace$.

\begin{definition}
	Let $u \in K\lbrace x_1,\ldots,x_n\rbrace$.
In this paper, if $\ord_{x_j}^{\partial}(u)=r_j\geq 0$, then we will call $x_j^{(r_j)}$ the \defi{leader of $u$ in the variable $x_j$} and $\partial u/\partial x_j^{(r_j)}$ the \defi{separant of $u$ in the variable $x_j$}. 
When no $x_i$ is specified the \defi{leader} of an element $u$ is the largest $x_i^{(r)}$ in the term ordering such that $\partial u/ \partial x_i^{(r)} \neq 0$. 
We denote it by $\ell_u$.
If $\ell_u=x_i^{(r)}$, then the \defi{leading variable} of $u$ is $x_i$. 
If the words leader or separant are used without reference to a variable we always mean the leader and separant with respect to the leading variable. 
\end{definition}

\begin{definition}
\label{D:ranking}  
Let $f,g \in K\lbrace x_1,\ldots,x_n\rbrace$. 
We say that $f$ is \defi{larger} that $g$ in the differential indeterminate $x_i$, and write $f \succ_i g$, if and only if $\ord^{\partial}_{x_i}(f)>\ord^{\partial}_{x_i}(g)$, or $\ord^{\partial}_{x_i}(f)=\ord^{\partial}_{x_i}(g)$  (i.e., $\ell$ is the simultaneous leader of $f$ and $g$ in the variable $x_i$) and, writing $f=\sum_{i=1}^{d_f} a_i \ell^i$ and $g=\sum_{i=1}^{d_g} b_i \ell^i$, we have $d_f>d_g$. 

If no variable is mentioned, and say \defi{$f$ is larger than $g$} we mean that $\operatorname{in}_{\prec}(f)\succ \operatorname{in}_{\prec}(g)$ where $\operatorname{in}_{\prec}$ denotes the leading monomial in the term order. 
\end{definition}

\begin{definition}
	Let $R$ be a $\partial$-ring. 
	The \defi{Weyl algebra} of $R$ is the unique associative algebra $R[\partial]$ which is a left $R$-algebra, as a left $R$-module is a direct sum $R[\partial] = \bigoplus_{i \geq 0} R\partial^i$, and such that for every $r\in R$, we have the commutation relation $\partial r = \partial(r) + r\partial$.
\end{definition}

\begin{definition}
	Let $R$ be a differential ring. 
	We say that an $R$-module $M$ is a \defi{$D$-module} if it is a left $R[\partial]$-module. 
\end{definition}

The ring $R$ itself is a $D$-module and ideals $I \subset R$ which are $D$-submodules are differential ideals. Given $L \in R[\partial]$ and $r\in R$ we will write $L(r)$ for the $D$-module multiplication.
In the $D$-module literature authors often write $\mathcal{D}=R[\partial]$ so that a $D$-module quite literally becomes a left $\mathcal{D}$-module.

\begin{definition}
\label{D:ritt-division}  

Given two differential polynomials $f$ and $g$ in $K\lbrace x_1,
\ldots,x_n\rbrace$ and a variable $x_i$ such that $g$ is lower than $f$ in the variable $x_j$, we can perform a \defi{Ritt division}.  This yields an expression of the form 
\begin{equation}\label{E:ritt-division}
sf=Q(g)+r	
\end{equation}
where $Q\in K\lbrace x_1,\ldots,x_n\rbrace[\partial]$, $s$ is in the multiplicative set generated by the initial and separant of $g$ with respect to the variable $x_i$, and $r$ is lower than $g$ in the variable $x_i$  \cite[Ch2, \S2, pg25]{Buium1994}.
\end{definition}

There is also \defi{Ritt $\partial$-division} (read ``Ritt partial division'') where one does not ``fully reduce'' but only finds $r$ such that $\ord^{\partial}_{x_i}(r) \leq \ord^{\partial}_{x_i}(g)$. 
In this case, $s$ in equation \eqref{E:ritt-division} can be taken to be in the multiplicative set generated by the separant of $g$ in the variable $x_i$ (so initials are not used). 
See \cite[Algorithm 2.2]{Hubert2000}.

As before, if no variables are mentioned and $f\succ g$, then the division is assumed to be in leading variable of $g$ with respect to $\prec$.

\begin{definition}
If, after a division of $f$ by $g$ (resp.~$\partial$-division, resp.~division with respect to $x_i$, resp.~$\partial$-division with respect to $x_i$), one finds that $r=0$ in \eqref{E:ritt-division}, then we say that $f$ is \defi{Ritt reduced} (resp.~\defi{Ritt $\partial$-reduced}, resp.~\defi{Ritt reduced in $x_i$}, resp.~\defi{Ritt $\partial$-reduced in $x_i$}) with respect to $g$. 
\end{definition}

\begin{definition}
An \defi{autoreduced sequence} (also called an \defi{autoreduced set}) is a sequence $g_1\prec g_2 \prec \cdots \prec g_s$ with $g_i \in K\lbrace x_1,\ldots,x_n\rbrace$ such that for all $i$ and $j$ with $1\leq i,j \leq s$ and $i\neq j$ we have that $g_i$ is Ritt reduced with respect to $g_j$.
We will write such a sequence as $G=(g_1,\ldots,g_s)$.
\end{definition}

For autoreduced sequences there exist similar notions of Ritt division with the various adjectives which we call \defi{Ritt division by $G$}.
See \cite[pg 25, Ch2, \S2]{Buium1994}. 
In these situations the division algorithm yields an expression
\begin{equation}\label{E:ritt-division-autored}
	sf=Q_1(g_1)+\cdots + Q_r(g_r)+r	
\end{equation}
where $Q_1,\ldots,Q_r \in K\lbrace x_1,\ldots,x_n\rbrace[\partial]$ and $s \in S_G$, where $S_G$ is the multiplicative set generated by the initials and separants of $G$ (in the $\partial$-division case, $S_G$ can be taken to just be generated by the separants of elements of $G$), and $r$ is Ritt reduced with respect to each $g_i$ in $G$. 
We use the notation $r=\rem_G(f)$ for the Ritt remainder after Ritt division by $G$. 

\begin{definition}
Let $\prec$ be a ranking on $K\lbrace x_1,
\ldots,x_n\rbrace$.
Let $G=(g_1,\ldots,g_s)$ and $H=(h_1,\ldots,h_{s+t})$ be two autoreduced sets where $t \in \ZZ_{\geq 0}$.  
The \defi{induced ordering} $\prec$ on autoreduced sets is given by 
 $$G \prec H \iff (g_1,\ldots,g_s) \prec_{\lex} (h_1,\ldots,h_s)$$
and 
 $$H \prec G \iff (h_1,\ldots,h_s) \prec_{\lex} (g_1,\ldots,g_s) \mbox{ or } ((g_1,\ldots,g_s) \nprec_{\lex} (h_1,\ldots,h_s) \mbox{ and } t>0).$$
\end{definition}

\begin{definition}\label{D:characteristic-set}
	Fix a ranking $\prec$ on $K\lbrace x_1,\ldots,x_n\rbrace$. 
	Let $P \subset K\lbrace x_1,\ldots,x_n\rbrace$ be a prime differential ideal. 
	A \defi{characteristic sequence} (also called a \defi{characteristic set}) for $P$ in the term order $\prec$ is an autoreduced sequence $G=(g_1,\ldots,g_s)$ with $g_i \in P$ which is minimal in the induced ordering on autoreduced sets. 
\end{definition}

\begin{definition}
Let $R$ be a ring and $I$ an ideal in $R$. 
If $S \subset R$ is a multiplicatively closed set and $\ell_S\colon R \to R_S = S^{-1} R$ is the localization-at-$S$ map, then the \defi{saturation of $I$ with respect to $S$} is the ideal $$\sat_S(I) = \ell_S^{-1}( I R_S) = \lbrace r \in R \colon \exists s \in S, sr \in I \rbrace.$$ 
\end{definition}

We make use of the following fundamental properties of characteristic sets and prime ideals. 
\begin{theorem}
\label{T:fundamentals-of-char-sets}  
	Let $P \subset K\lbrace x_1,\ldots,x_n\rbrace$ be a prime ideal. 
	\begin{enumerate}
		\item For any ranking $\prec$ on $K\lbrace x_1,\ldots,x_n\rbrace$ there exists a characteristic sequence $G=(g_1,\ldots,g_s)$ for $P$.
		\item \label{item:saturation-char-set}
  If $G=(g_1,\ldots,g_s)$ is a characteristic sequence for $P$, then $P= \sat_{S}([g_1,\ldots,g_s])$.
		\item \label{item:membership-char-set} Let $f \in K\lbrace x_1,\ldots,x_n\rbrace$. 
		We have that $f \in P$ if and only if $\rem_G(f)=0$. 
	\end{enumerate}
\end{theorem}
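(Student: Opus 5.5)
The plan is the classical Ritt--Kolchin argument. I read ``prime ideal'' as a prime \emph{differential} ideal, since characteristic sets are only meaningful in that case, and I treat the three parts in the order (1), (3), (2).

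\textbf{Part (1): existence.} I will show that the induced ordering on autoreduced sets is well-founded.
\begin{enumerate}
\item An autoreduced set contains at most one element with leading variable $x_i$ for each $i$. Indeed, two elements with the same leading variable would have comparable leaders, and the higher one would not be Ritt reduced with respect to the lower one. Hence every autoreduced set has length at most $n$.
\item Suppose $G^{(1)}\succ G^{(2)}\succ\cdots$ is an infinite strictly decreasing chain. Passing to a subsequence, the first elements stabilize. This uses that ranks of single differential polynomials (leader, then degree in the leader) are well-ordered, because for a fixed leader only finitely many variables $x_j^{(r)}$ lie below it.
\item Inducting on the position in the sequence, together with the length bound $n$, gives a contradiction.
\item Since $P$ contains autoreduced subsets (the empty one, or singletons of nonzero elements), a minimal autoreduced set $G$ with elements in $P$ exists.
\end{enumerate}

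\textbf{Key lemma for (3).} If $h\in P$ is nonzero and Ritt reduced with respect to every $g_i\in G$, then $G$ is not minimal. To see this, let $g_1,\dots,g_k$ be the elements of $G$ of rank lower than $h$, and set $G'=(g_1,\dots,g_k,h)$.
\begin{itemize}
\item $G'$ is autoreduced. The $g_i$ are mutually reduced, $h$ is reduced with respect to them, and each $g_i$ is reduced with respect to $h$ because it has lower rank, hence lower leader or lower degree.
\item $G'\prec G$ in the induced ordering. Either $h$ is lower than $g_{k+1}$, or $G$ has length $k$ and $G'$ is longer with the same prefix.
\end{itemize}

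\textbf{Part (3): membership.} First suppose $f\in P$. Ritt division by $G$ gives $sf=\sum_i Q_i(g_i)+r$ as in \eqref{E:ritt-division-autored}. Since $P$ is a differential ideal, $r=sf-\sum_i Q_i(g_i)\in P$, and $r$ is reduced with respect to $G$. By the key lemma, $r=0$.

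For the converse I first need that $S_G\cap P=\emptyset$. Each initial $I_{g_i}$ and separant $S_{g_i}$ is nonzero, because in characteristic zero the separant of a polynomial genuinely involving its leader is nonzero. Each is also lower in rank than $g_i$: the initial has a lower leader, and the separant has the same leader with lower degree. It follows that each is Ritt reduced with respect to all of $G$. This needs a small check against the $g_j$ with $j\ne i$, using that $g_i$ itself is reduced with respect to them. The key lemma then shows none of them lies in $P$, and primality gives $S_G\cap P=\emptyset$. Now if $\rem_G(f)=0$, then $sf\in[g_1,\dots,g_s]\subset P$ with $s\in S_G$, so $s\notin P$ forces $f\in P$.

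\textbf{Part (2): saturation.} For $f\in P$, (3) gives $sf\in[G]$, so $f\in\sat_S([G])$. Conversely, if $sf\in[G]\subset P$ with $s\in S$, then $s\notin P$ and primality give $f\in P$.

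\textbf{Main obstacle.} I expect two delicate points. The first is the well-foundedness in (1), specifically the bookkeeping of the lexicographic comparison when lengths differ. The second is verifying that initials and separants are reduced with respect to the \emph{other} elements of $G$. For the latter I would check directly that they have the same or lower order in every leader variable of $G$ compared with $g_i$, so reducedness is inherited from $g_i$.
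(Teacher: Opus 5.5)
Your argument is correct up to one repair noted below, but it is not the paper's route. The paper gives no argument, only citations: Kolchin I.10, Proposition 3 for (1); Hubert's algorithmic results for (1) and (2); and, for (3), a lemma of Kolchin that the paper identifies with Rosenfeld's lemma. You instead give the self-contained classical proof. First, the ordering on autoreduced sets is well-founded. Then a minimal autoreduced subset of $P$ leaves no nonzero element of $P$ reduced with respect to it, which yields (3) directly, and (2) follows once primality gives $S_G\cap P=\emptyset$. What your route buys is the recognition that (2) and (3) use nothing beyond minimality of $G$ and primality of $P$. Rosenfeld's lemma is not needed: it concerns coherent autoreduced sets $A$ and is the tool for the converse question of when $\sat_{S_A}([A])$ is prime with characteristic set $A$. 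No algorithm is needed either. The citation route buys brevity and, via Hubert, an effective procedure for computing $G$. Your reading of ``prime'' as ``prime differential'' is necessary, not just convenient. The ideal $(x')\subset K\lbrace x\rbrace$ is prime but not differential, and $G=(x')$ is a minimal autoreduced subset of it; yet $\rem_G(x'')=0$ while $x''\notin(x')$.

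The repair concerns step 2 of (1). There you justify well-ordering of ranks by saying that only finitely many derivatives lie below a fixed leader. That holds for orderly rankings but fails for elimination rankings: in a block ranking with all $y^{(r)}\prec x$, infinitely many derivatives lie below $x$. The paper applies (1) to exactly such rankings in Proposition~\ref{P:characteristic-elimination} and Theorem~\ref{T:moving-dimension}. The conclusion still holds for every ranking. An infinite strictly decreasing sequence of derivatives would contain infinitely many derivatives of a single $x_j$. By the first ranking axiom these are compared by order, which cannot decrease forever. So the derivatives are well-ordered, and ranks, compared lexicographically by leader and then degree, are well-ordered too.

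A smaller point: when checking that $I_{g_i}$ and $S_{g_i}$ are reduced with respect to $g_j$, $j\neq i$, you must control the degree in the leader $u_j$ of $g_j$, not only the order in $x_j$. Both are inherited from $g_i$, since neither taking a coefficient in $u_i$ nor applying $\partial/\partial u_i$ raises the degree in $u_j\neq u_i$.
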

\begin{proof}[References]
The first statement can be found in Kolchin's book \cite[I.10, Proposition 3]{Kolchin1973}.
An algorithm for the first and second statements is found in Hubert's manuscript \cite[Theorem 4.5, Theorem 5.2, and Algorithm 7.1]{Hubert2000}.
The third item can also be found in Kolchin's book \cite[III.8, Lemma 5]{Kolchin1973}.
It is commonly referred to as Rosenfeld's Lemma (see \cite[below Theorem 1]{Kondratieva2006}).
\end{proof}
A differential algebra package for working with differential polynomial rings and characteristic sets authored by the authors of this manuscript and used for experimentation can be found on github \cite{dupuyZB:magma-package}. 

\begin{proposition}\label{P:characteristic-sets-and-dimension}
	If $P$ is a prime differential ideal in $K\lbrace x_1,\ldots,x_n\rbrace$ with a characteristic set $(g_1,\ldots,g_s)$ with respect to some ranking, then $\trdeg^{\partial}(\kappa(P))=n-s$. 
	In particular, for all ranking and all characteristic sets of $P$ with respect to that ranking, the characteristic set will have $s$ elements.
\end{proposition}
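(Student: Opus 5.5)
The plan is to compare $\kappa(P)$ with the differential field generated by the variables that are \emph{not} leaders of $G=(g_1,\ldots,g_s)$. First I will observe that the leading variables of the $g_i$ are pairwise distinct. If $g_i$ and $g_j$ with $i\neq j$ had the same leading variable $x_k$, then the one of higher order or degree in $x_k$ would not be Ritt reduced with respect to the other, contradicting autoreducedness. After renaming, let $x_{n-s+1},\ldots,x_n$ be the leading variables, with $\ell_{g_i}=x_{n-s+i}^{(r_i)}$, and let $U=\lbrace x_1,\ldots,x_{n-s}\rbrace$.

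\emph{Lower bound.} I will show that the images of $U$ in $\kappa(P)$ are differentially algebraically independent over $K$, which gives $\trdeg^{\partial}_K(\kappa(P))\geq n-s$. Let $0\neq f\in K\lbrace U\rbrace$. No derivative of a leading variable occurs in $f$, so $f$ is Ritt reduced with respect to every $g_i$. Hence $\rem_G(f)=f\neq 0$, and Theorem~\ref{T:fundamentals-of-char-sets}(\ref{item:membership-char-set}) gives $f\notin P$. Therefore $K\lbrace U\rbrace\to K\lbrace x_1,\ldots,x_n\rbrace/P$ is injective.

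\emph{Upper bound.} I will count ordinary transcendence degrees of finite-order truncations. Let $F_N\subset\kappa(P)$ be the field generated over $K$ by all $x_k^{(m)}$ with $m\leq N$. I claim that $F_N$ is algebraic over the field generated by
\begin{itemize}
\item the $x_k^{(m)}$ with $k\leq n-s$ and $m\leq N$, and
\item the finitely many $x_{n-s+i}^{(m)}$ with $m<r_i$.
\end{itemize}
The argument goes through the derivatives of leaders in increasing ranking order. For $k\geq 0$, the polynomial $g_i^{(k)}$ lies in $P$ and has leader $x_{n-s+i}^{(r_i+k)}$. For $k\geq1$ it is linear in that leader with coefficient the separant $s_i$ of $g_i$. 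For $k=0$ it is a nontrivial polynomial in the leader whose leading coefficient is the initial $\operatorname{init}(g_i)$. All other variables occurring in $g_i^{(k)}$ are of lower rank. So, provided $s_i\notin P$ and $\operatorname{init}(g_i)\notin P$, each derivative of a leader is algebraic over lower-ranked generators, and induction on the ranking proves the claim. The claim gives $\trdeg_K F_N\leq (n-s)(N+1)+\sum_i r_i$. Since $\kappa(P)$ is the union of the $F_N$, any $m$ differentially independent elements would force growth of order $m(N+1)$ in $\trdeg_K F_N$ (after raising $N$ to account for their finite orders). Hence $\trdeg^{\partial}_K(\kappa(P))\leq n-s$.

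The step I expect to need the most care is $s_i,\operatorname{init}(g_i)\notin P$. I will again use Theorem~\ref{T:fundamentals-of-char-sets}(\ref{item:membership-char-set}). Both polynomials are nonzero and lower than $g_i$ in $x_{n-s+i}$: the initial does not involve the leader, and the separant has smaller degree in it. They are also reduced with respect to each $g_j$ with $j\neq i$, because $g_i$ is. Indeed, reducedness of $g_i$ with respect to $g_j$ bounds the order and degree of $g_i$ in $\ell_{g_j}$, and both the coefficients of $g_i$ and its partial derivative $\partial g_i/\partial \ell_{g_i}$ inherit these bounds. Thus their remainders by $G$ are themselves, which are nonzero, so they are not in $P$. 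Finally, the last sentence of the proposition follows at once: $\trdeg^{\partial}_K(\kappa(P))$ depends only on $P$, so every characteristic set of $P$, for any ranking, has $n-\trdeg^{\partial}_K(\kappa(P))$ elements.
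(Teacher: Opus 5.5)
Your lower bound, the distinctness of the leading variables, and your check that $s_i$ and $\operatorname{init}(g_i)$ are reduced with respect to $G$ (hence not in $P$) are all correct. The deduction of the final sentence is also fine. The gap is the truncation claim in the upper bound: that $F_N$ is algebraic over the field generated by the $x_k^{(m)}$ with $k\le n-s$, $m\le N$, together with the $x_{n-s+i}^{(m)}$ with $m<r_i$.

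Your induction only uses that the variables of $g_i^{(k)}$ other than its leader have \emph{lower rank} than $x_{n-s+i}^{(r_i+k)}$. For an orderly ranking this forces their order to be at most $r_i+k$. The proposition, however, is stated for arbitrary rankings, including elimination rankings, and there lower rank says nothing about order. Concretely, take $n=2$ and rank every derivative of $x_1$ below every derivative of $x_2$. Let $P=[x_2-x_1^{(5)}]$; it is prime with characteristic set $(x_2-x_1^{(5)})$, so $s=1$ and $r_1=0$. Since $\bar x_1$ is differentially transcendental, $F_N=K(\bar x_1,\ldots,\bar x_1^{(N)},\bar x_1^{(5)},\ldots,\bar x_1^{(N+5)})$. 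So $\trdeg_K F_0=2$ and $\trdeg_K F_N=N+6$ for $N\ge 4$. This contradicts your bound $(n-s)(N+1)+\sum_i r_i=N+1$. As written, your proof is complete only for orderly rankings.

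The conclusion survives, and the cleanest repair is to stop truncating the non-leading variables. Put $L=K\langle \bar x_1,\ldots,\bar x_{n-s}\rangle\subset\kappa(P)$. A ranking is a well-ordering, so your induction applies, using $s_i,\operatorname{init}(g_i)\notin P$. It shows that every derivative of a leading variable is algebraic over $L(\bar x_{n-s+i}^{(m)} : 1\le i\le s,\ m<r_i)$, because all derivatives of $x_1,\ldots,x_{n-s}$, of any order, already lie in $L$. Hence $\trdeg_L\kappa(P)\le\sum_i r_i<\infty$, so $\trdeg^{\partial}_L\kappa(P)=0$. Additivity of differential transcendence degree in towers, together with your lower bound, then gives $\trdeg^{\partial}_K\kappa(P)=n-s$.

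This is exactly the shape of the paper's proof. The paper passes to $F=K\langle y_1,\ldots,y_t\rangle$ and gets finiteness of $\trdeg_F$ by quoting an external result bounding it by $J(g_1,\ldots,g_s)$. Your separant/initial argument would make that step self-contained.

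Alternatively, you could keep counting over $K$, but you would first need an order-growth lemma. It should say that each $\bar x_{n-s+i}^{(m)}$ is algebraic over the initial leader derivatives and non-leading derivatives of order at most $m+C$, for a constant $C$ depending only on $G$. That would still give transcendence-degree growth of slope $n-s$.
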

\begin{proof}
	This is \cite[Section 4.1, Section 4.2, particularly Theorem 4.5]{Cluzeau2003}.

	We give a direct proof.
	Let $P$ be a prime differential ideal with characteristic set $G=(g_1,\ldots,g_s)$ in some ranking $\prec$. 
	The leading variables of the $g_i$ are all distinct, otherwise the sequence would not be autoreduced. 
	 Write $\lbrace x_1,\ldots,x_n\rbrace = \lbrace z_1,\ldots,z_s\rbrace \amalg \lbrace y_1,\ldots,y_t\rbrace$ where $\lbrace z_1,\ldots,z_s\rbrace$ are the leading variables of $(g_1,\ldots,g_s)$ and $y_1,\ldots,y_t$ are the other variables. 
	 In this situation we have that $s+t=n$.
	 For each $i$ with $1\leq i \leq t$ and every $j\geq 0$ we have that $y_i^{(j)} \notin P$ and $y_i^{(j)}$ is Ritt-reduced with respect to $G$. 
	This proves that $K\lbrace y_1,\ldots,y_t \rbrace \to K\lbrace x_1,\ldots,x_n\rbrace/P$ is injective; 
	in particular $\trdeg_K^{\partial}( K\lbrace x_1,\ldots,x_n\rbrace/P)\geq t$. 
	
	Now, over the function field $F=K(\lbrace y_1,\ldots,y_t\rbrace)$, the set $G=(g_1,\ldots,g_s)$ is a minimal autoreduced set $[g_1,\ldots,g_s] \subset F\lbrace z_1,\ldots,z_s\rbrace$ and hence a characteristic set with prime ideal $\widetilde{P}=\sat_G([g_1,\ldots,g_s])$ where the ranking on $F\lbrace z_1,\ldots,z_s\rbrace$ is induced by the ranking on $K\lbrace x_1,\ldots,x_n\rbrace$. 
	Since $g_1,\ldots,g_s$ considered over $F$ is now $s$ equations in $s$ variables, 
	by \cite[Corollary 1.8]{Dupuy2026a}, $\trdeg_F(F\lbrace z_1,\ldots,z_s\rbrace/\widetilde{P})\leq J(g_1,\ldots,g_s)$ and hence $\trdeg_F^{\partial}(F\lbrace z_1,\ldots,z_s\rbrace/\widetilde{P})=0$.
	Since $\kappa(P) = \kappa(\widetilde{P})$, we have 
  \begin{equation}
  \label{eq:trdeg-eq}  
  \trdeg_K^{\partial}(\kappa(P))=\trdeg_F^{\partial}(\kappa(P)) + \trdeg_K^{\partial}(F) = t.
  \end{equation}
	
	Since Equation \eqref{eq:trdeg-eq} is independent of the ranking, it also follows that $s$ is independent of the choice of ranking $\prec$ and characteristic set.  
  \end{proof}

	
\begin{proposition}[Prime Differential Ideal Membership]\label{P:differential-ideal-membership}
	Let $P$ be a prime differential ideal in $K\lbrace x_1,\ldots,x_n\rbrace$ with characteristic set $(A_1,\ldots,A_s)$ in some ordering. 
	Let $f\in K\lbrace x_1,\ldots,x_n\rbrace$. 
	We have $f \in P$ if and only if the remainder of $f$ with respect to Ritt division by $(A_1,\ldots,A_n)$ is zero.
\end{proposition}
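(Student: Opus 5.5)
The plan is to reduce everything to the Ritt division identity \eqref{E:ritt-division-autored} together with the minimality of a characteristic set. This is essentially item~\eqref{item:membership-char-set} of Theorem~\ref{T:fundamentals-of-char-sets}, so one could just cite it. I would nonetheless record the short argument, reading the $(A_1,\ldots,A_n)$ in the statement as $(A_1,\ldots,A_s)$. Write $G=(A_1,\ldots,A_s)$ and perform Ritt division of $f$ by $G$ to obtain
$$ s f = Q_1(A_1)+\cdots+Q_s(A_s) + r, \qquad s\in S_G,\quad r=\rem_G(f). $$
Since $P$ is a differential ideal containing every $A_i$, each $Q_i(A_i)$ lies in $P$. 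Hence $sf \equiv r \pmod P$.

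The first key step is to show $S_G\cap P=\emptyset$. Each initial and each separant of an $A_i$ is nonzero and strictly lower than $A_i$ in the ranking. Each is also Ritt reduced with respect to $A_i$, because it has lower degree in the leader of $A_i$ (or does not involve it). Moreover, because $G$ is autoreduced, each is Ritt reduced with respect to the other $A_j$. So the first step reduces to the same fact as the second: a nonzero element of $P$ that is reduced with respect to $G$ cannot exist. Granting this, $S_G\cap P=\emptyset$, since $P$ is prime and $S_G$ is generated by these elements. It then follows that $f\in P$ if and only if $r\in P$.

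The second key step, which I expect to be the only real content, is the following claim: if $r\neq 0$ is Ritt reduced with respect to $G$, then $r\notin P$. I would argue by contradiction. Let $A_1\prec\cdots\prec A_k$ be the elements of $G$ ranked strictly below $r$. I would build the new sequence $G'=(A_1,\ldots,A_k,r)$, inserting $r$ at that position and discarding $A_{k+1},\ldots,A_s$. The sequence $G'$ is autoreduced: $r$ is reduced with respect to each $A_i$, and each $A_i$ with $i\le k$ is lower than $r$, so it is reduced with respect to $r$. Some care is needed over the case where $r$ shares a leading variable with some $A_j$. There the reducedness of $r$ forces a lower order, or the same leader with lower degree, which is exactly what makes $r\prec A_j$, so $j>k$ and $A_j$ is discarded. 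Then $G'\subset P$ and $G'\prec G$ in the induced ordering on autoreduced sets: either $r\prec A_{k+1}$ at position $k+1$, or $k=s$ and $G'$ is strictly longer. This contradicts the minimality in Definition~\ref{D:characteristic-set}.

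Combining the two steps gives the result: $f\in P$ if and only if $r\in P$, and by the claim this holds if and only if $r=0$. The converse direction ($r=0$ implies $sf\in P$ implies $f\in P$) uses only the first step.
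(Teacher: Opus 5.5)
Your proof is correct. The paper does not actually argue this proposition: it only cites the discussion after Definition~2 in \cite{Ovchinnikov2004}, and the same statement appears as item~\eqref{item:membership-char-set} of Theorem~\ref{T:fundamentals-of-char-sets} with a citation to \cite{Kolchin1973}.

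What you give is the standard self-contained argument behind those references. Minimality of the characteristic set rules out any nonzero element of $P$ that is reduced with respect to $G$. That one fact does double duty: it forces the remainder to vanish, and, combined with primality, it keeps the initials and separants, hence $S_G$, out of $P$. Compared with the citation, your route also shows where each hypothesis enters. The ``no nonzero reduced element'' step uses only minimality, so it works for any proper differential ideal with characteristic set $G$. Primality is used only to get $S_G\cap P=\emptyset$.

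Two small points to tighten.
\begin{enumerate}
\item The nonzero reduced element in question could be a constant in $K$: the remainder, or an initial or separant of some $A_i$. Under the usual convention that autoreduced sets contain no elements of $K$, your set $G'=(r)$ would then not be admissible. Dispatch this case directly: a nonzero constant is a unit, and $P$ is proper.
\item Your argument relies on the standard meaning of ``reduced'': no proper derivative of the leader appears, and the degree in the leader is strictly smaller. It also compares elements by rank, i.e.\ leader first, then degree. It does not use the paper's literal wording of ``Ritt reduced'' (remainder zero) or ``larger'' (leading monomial). Your reading is the right one, since the paper's literal definitions would make ``autoreduced'' meaningless.
\end{enumerate}
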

\begin{proof}
	\cite[discussion below Definition 2]{Ovchinnikov2004}.
\end{proof}

The following is well-known, but we do not know of a reference which states this cleanly. 
\begin{proposition}[Elimination Theorem for Characteristic Sets]\label{P:characteristic-elimination}
	Consider a block ordering on $S = K\lbrace x_1,\ldots,x_n,y_1,\ldots,y_m\rbrace$ such that $x_i$ and all of its derivatives are higher than any derivative of any $y_j$ (i.e., $y_j^{(b)} \prec x_i^{(a)}$ for all $i,j,a,b$). 
	Let $P \subset K\lbrace x_1,\ldots,x_n,y_1,\ldots,y_m\rbrace$ be a prime ideal. 
	If $(B_1,\ldots,B_s,A_1,\ldots,A_t)$ is a characteristic set for $P$ such that $B_i \in K\lbrace y_1,\ldots,y_m \rbrace$ and $A_i \in K\lbrace x_1,\ldots,x_n, y_1,\ldots,y_m \rbrace \setminus K\lbrace x_1,\ldots,x_n\rbrace$, then $(B_1,\ldots,B_s)$ is a characteristic set for $P \cap K \lbrace y_1,\ldots,y_m\rbrace$. 
\end{proposition}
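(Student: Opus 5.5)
The statement to prove is the elimination property of characteristic sets for a block ranking. The plan is to verify directly that $(B_1,\ldots,B_s)$ satisfies the definition of a characteristic set for the prime differential ideal $P_0 := P \cap K\lbrace y_1,\ldots,y_m\rbrace$. The definition has three parts: the $B_i$ lie in $P_0$, they form an autoreduced sequence, and the sequence is minimal among autoreduced sequences in $P_0$.

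\textbf{Membership.} This is immediate: $B_i\in P$ and $B_i \in K\lbrace y\rbrace$.

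\textbf{Autoreducedness.} A subsequence of an autoreduced sequence is autoreduced, since the reducedness conditions are pairwise. The restricted ranking on $K\lbrace y\rbrace$ is still a ranking, and for elements of $K\lbrace y \rbrace$ leaders, initials and separants are the same whether computed in $S$ or in $K\lbrace y\rbrace$. So $(B_1,\ldots,B_s)$ is autoreduced in $K\lbrace y\rbrace$.

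\textbf{Minimality.} I will argue by contradiction and expect this to be the main step. Suppose $C=(C_1,\ldots,C_r)$ is an autoreduced sequence in $P_0$ with $C \prec B$. I will build an autoreduced sequence in $P$ strictly lower than $(B,A)$, contradicting minimality of $(B_1,\ldots,B_s,A_1,\ldots,A_t)$.

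The natural candidate is $C$ followed by those $A_j$ whose leader is not reduced-against by $C$. To avoid checking reducedness of each $A_j$ against $C$, I would instead use the ordering on autoreduced sets directly.
\begin{itemize}
\item If $C\prec B$ because at the first index $k$ where they differ we have $C_k \prec B_k$, then $(C_1,\ldots,C_k)=(B_1,\ldots,B_{k-1},C_k)$. This sequence is autoreduced and lies in $P$, and it is lower than $(B_1,\ldots,B_s,A_1,\ldots,A_t)$ at position $k$. That is a contradiction.
\item Otherwise $B$ is a proper initial segment of $C$, so $r>s$ and $C_{s+1}\in P_0$. Then $(B_1,\ldots,B_s,C_{s+1})$ is autoreduced and lies in $P$. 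Since $C_{s+1}\in K\lbrace y\rbrace$, the block ordering gives $C_{s+1} \prec A_1$ when $t>0$: the leader of $A_1$ is some $x_i^{(a)}$, because $A_1\notin K\lbrace y\rbrace$ and the leading-monomial property of elimination orderings applies. This sequence is therefore lower than $(B,A)$, again a contradiction.
\item If $t=0$, a longer sequence is lower in the induced ordering, so it is again lower than $(B,A)$, and we get the same contradiction.
\end{itemize}

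The points needing care are the following. The $A_i$ contain some $x$-variable, so their leaders are $x$-derivatives; hence the $B$'s must precede all $A$'s in the sequence, which justifies the stated form. One must also check that the convention for comparing sequences of different lengths matches the induced ordering defined above, and this is the fiddly part. Alternatively, minimality can be verified via the standard criterion that an autoreduced $G\subset P$ is characteristic iff every $f \in P$ reduces to $0$ modulo $G$. For $f\in P_0$, Ritt division by $(B,A)$ never uses the $A_i$, because $f$ involves no $x$-derivatives. So $\rem_{B}(f)=\rem_{(B,A)}(f)=0$ by Theorem~\ref{T:fundamentals-of-char-sets}(\ref{item:membership-char-set}), giving the result.
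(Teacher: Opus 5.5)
Your proposal is correct and follows the paper's route: the paper also argues minimality by turning a lower autoreduced subset of $P\cap K\lbrace y_1,\ldots,y_m\rbrace$ into one lower than $(B_1,\ldots,B_s,A_1,\ldots,A_t)$, and then observes that Ritt division of $f\in K\lbrace y_1,\ldots,y_m\rbrace$ by the full set never uses the $A_i$ (both arguments need the hypothesis read as $A_i\notin K\lbrace y_1,\ldots,y_m\rbrace$, which is how you read it). One small slip: the lexicographic comparison only makes $C_i$ and $B_i$ tied in the ordering, not equal, so instead of writing $(C_1,\ldots,C_k)=(B_1,\ldots,B_{k-1},C_k)$ you should compare $(C_1,\ldots,C_k)$, resp.\ $(C_1,\ldots,C_{s+1})$, with $(B_1,\ldots,B_s,A_1,\ldots,A_t)$ directly, which gives the same contradiction.
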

\begin{proof}
	Let $Q = P \cap K\lbrace y_1,\ldots,y_m \rbrace$. 
	Let $f \in K\lbrace y_1,\ldots,y_m\rbrace$. 	
	First, we claim that $(B_1,\ldots,B_s)$ is a minimal autoreduced subset of $Q$. 
	If not, then we could modify the autoreduced set $(B_1,\ldots,B_s,A_1,\ldots,A_t)$ to a larger one as well. 
	We will now show membership. 
	Suppose $f \in Q$. 
	Then $f \in P$, and hence its Ritt-remainder after by $(B_1,\ldots,B_s,A_1,\ldots,A_t)$ will be zero by the membership test (Proposition~\ref{P:differential-ideal-membership}). 
	But $f$ is Ritt reduced with respect to $(A_1,\ldots,A_t)$ since the leaders are higher than the leader of $f$. 
	This implies the division algorithm will only involve the $(B_1,\ldots,B_s)$ and hence $f$ is zero with respect to division by $(B_1,\ldots,B_s)$. 
\end{proof}

The following lemma is also well known, but we could not find a clean reference.

\begin{lemma}
\label{L:one-variable-case}  
Let $T \subset K\lbrace x \rbrace \setminus \{0\}$ be a finite subset, let $[T]$ be the differential ideal generated by $T$, and let $P \supset [T]$ be a minimal prime ideal. Then $\dim K\lbrace x\rbrace/P$ is finite.
\end{lemma}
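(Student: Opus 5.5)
The plan is to use a characteristic set for $P$ and count the one-variable constraints it imposes. Since $T$ is a finite set of nonzero differential polynomials, pick $f\in T$. Because $P\supset[T]\ni f$ and $f\neq 0$, the prime $P$ is nonzero. With only one differential indeterminate, any characteristic set of $P$ (Theorem~\ref{T:fundamentals-of-char-sets}) consists of exactly one element $g\in K\lbrace x\rbrace$. Indeed, it is nonempty because $P\neq 0$: the minimal nonzero element of $P$ forms an autoreduced set lower than the empty set. It has at most one element because autoreduced sets have distinct leading variables. By Proposition~\ref{P:characteristic-sets-and-dimension}, $\trdeg^{\partial}_K\kappa(P)=1-1=0$, but this alone does not give finite ordinary transcendence degree. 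So I argue directly with $g$.

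Let $r=\ord^{\partial}_x(g)$, and let $S_g$ be the multiplicative set generated by the initial $i_g$ and the separant $s_g$ of $g$. By minimality of $g$ among nonzero elements of $P$, neither $i_g$ nor $s_g$ lies in $P$, since both are nonzero and lower than $g$. By Theorem~\ref{T:fundamentals-of-char-sets}(\ref{item:saturation-char-set}), $P=\sat_{S_g}([g])$.

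Now I show that $\kappa(P)$ is generated over $K$ by the images of $x,x',\ldots,x^{(r)}$ and is algebraic over $K(x,\ldots,x^{(r-1)})$. Differentiating $g$ gives $g^{(k)}=s_g\,x^{(r+k)}+(\text{terms of order}<r+k)$ for $k\geq 1$. Since $s_g\notin P$, each $x^{(r+k)}$ lies in $K(x,\ldots,x^{(r+k-1)})$ inside $\kappa(P)$. By induction, $\kappa(P)=K(x,x',\ldots,x^{(r)})$. Moreover, $g$ itself is a nonzero polynomial relation in $x^{(r)}$ over $K[x,\ldots,x^{(r-1)}]$ with leading coefficient $i_g\notin P$, so $x^{(r)}$ is algebraic. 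Hence $\trdeg_K\kappa(P)\leq r<\infty$. Since $\dim K\lbrace x\rbrace/P$ is by definition $\trdeg_K\kappa(P)$ ($P$ being prime), this finishes the argument.

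The main point needing care is that $P$ is nonzero. This is where minimality of $P$ over $[T]$, together with $T\neq\emptyset$ and $0\notin T$, is actually used; otherwise the argument only needs $P\supsetneq 0$. If $T$ were allowed to be empty, the statement would fail ($P=0$ has infinite dimension), so I will note that $T$ is implicitly nonempty. The other step to check is that $i_g,s_g\notin P$. This is the standard fact that an element of $P$ strictly lower than the characteristic set's element must reduce to zero, hence be zero.
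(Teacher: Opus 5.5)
Your argument is correct, and it takes a more direct route than the paper's.

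\textbf{The paper's route.} The paper inducts on the minimal order of an element $u\in T$. It reduces to $u$ irreducible and splits the minimal primes over $[u]$ into two kinds:
\begin{enumerate}
\item the general component $\sat_{s_u}([u])$, handled by observing that after inverting $s_u$ the quotient is generated by derivatives of order at most $\ord^{\partial}_x(u)$;
\item minimal primes over $[u,s_u]$, handled by induction.
\end{enumerate}

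\textbf{Your route.} You work with $P$ itself. You take $g\in P$ of minimal rank and note that $s_g,i_g\notin P$ by minimality. You then read off that $\kappa(P)=K(\bar x,\ldots,\bar x^{(r)})$ with $\bar x^{(r)}$ algebraic over the lower derivatives. This needs no factorization, no splitting into general and singular components, and no induction.

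\textbf{What your route buys.}
\begin{enumerate}
\item An explicit bound: $\trdeg_K\kappa(P)\le \ord^{\partial}_x(g)\le \min_{f\in T}\ord^{\partial}_x(f)$. For $T=\{u\}$ this is exactly the one-variable Ritt bound that the paper cites separately.
\item It avoids a slip in the paper's induction. The paper asserts that $\ord^{\partial}_x(s_u)<\ord^{\partial}_x(u)$, which can fail: $u=(x')^2-4x$ has $s_u=2x'$ of the same order. So the paper's induction must really run over ranks (order, then degree in the leader).
\end{enumerate}

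\textbf{Two small remarks.}
\begin{enumerate}
\item You never use $P=\sat_{S_g}([g])$ or any characteristic-set theory beyond the existence of a nonzero element of $P$ of minimal rank. That existence follows because ranks in one variable are well ordered, so those citations can be dropped.
\item Minimality of $P$ over $[T]$ is not what makes $P\neq 0$; containment of $[T]$ with $T\neq\emptyset$ already does that. The only place minimality (together with characteristic zero) enters is in guaranteeing that $P$ is a differential ideal. You need this to conclude that $g^{(k)}\in P$.
\end{enumerate}
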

\begin{proof}
	
	The proof is by induction on the minimal order of an element of $T$.  
	In the case that the order of the minimal element $u \in T $ is $-\infty$, then $u \in K$ and $[T]=K\lbrace x \rbrace$, which geometrically corresponds to the empty set. 
	
	Suppose now that every element of $T$ has nonnegative order and that the smallest element has order $r$. 
	We will suppose the lemma holds for all $T$ with an element order less than $r$. 
	Let $u \in T$ be an element of minimal order. 
	It suffices to show that for minimal $P\supset [u]$, $\dim K\lbrace x \rbrace/P < \infty$ (since all the other elements of $T$ just impose extra relations). 
	
	It suffices to factor $u$ into irreducible factors and then prove the lemma for each factor, so we suppose without loss of generality that $u$ is irreducible.  
	We know $[u] = P_0 \cap [u,s_u]$ where $P_0 = \sat_{s_u}([u])$ and $s_u$ is the separant of $u$ (c.f.~\cite[section 30, pg 49]{Kaplansky1976} and Theorem \ref{T:fundamentals-of-char-sets} (\ref{item:saturation-char-set})), and $P$ is either $P_0$ or a minimal prime over $[u,s_u]$ (by \cite[pg 49]{Kaplansky1976}).

  In the first case, since $s_u \not \in P_0$, $\dim K\lbrace x \rbrace/P_0 = \dim \left(K\lbrace x \rbrace/P_0\right) \left[\frac{1}{s_u}\right]$.
  By Theorem \ref{T:fundamentals-of-char-sets} (\ref{item:membership-char-set}) and Equation \eqref{E:ritt-division}, $\left(K\lbrace x \rbrace/P_0\right) \left[\frac{1}{s_u}\right]$ is generated by elements of order at most the order of $u$, and in particular is finite dimensional.
  In the second case, since the order of $s_u$ is less than the order of $u$, the lemma holds by induction.
\end{proof}

\section{The Dimension Conjecture}\label{S:dimension-conjecture}

The proof of the Jacobi Bound Conjecture relative to the Dimension Conjecture requires starts with a differential ideal $[u_1,\ldots,u_n] \subset K\lbrace x_1,\ldots,x_n\rbrace$ and then requires an estimate of the differential dimension of irreducible components over $[u_2,\ldots,u_n]$, after we lose a single equation. 
To make this estimate, we need to apply the Dimension Conjecture in the form of Conjecture~\ref{C:goosed}.
The actual application appears in Theorem~\ref{T:moving-dimension}.

There are several flavors of the Dimension Conjecture. 
We don't know of a reference that cleanly works out that all the different versions are equivalent so we do so here. 
\begin{itemize}
	\item The weakest-looking version appears in Cohn's paper \cite[pg 1]{Cohn1983}. 
	He attributes it to Lando. The conjecture asserts that if a system has fewer equations than variables, then the differential dimension must be positive. 
	This is Conjecture~\ref{C:cohn}.
	\item A slightly stronger-looking version appears in Ritt's book \cite[pg 178]{Ritt1950}. 
	This conjecture imposes a lower bound on the differential dimension based on the difference in the number of variables and equations. This is Conjecture~\ref{C:ritt}. 
	\item Finally, there is a version that is analogous to the statement of Krull's principal ideal theorem which is useful in inductive arguments. This is Conjecture~\ref{C:goosed}.
\end{itemize}
We now state the conjectures formally. 
\begin{conjecture}[Cohn's Dimension Conjecture {\cite[pg 1]{Cohn1983}}]\label{C:cohn}
	Consider $u_1,\ldots,u_i \in K\lbrace x_1,\ldots,x_n\rbrace$.  
	Suppose $[u_1,\ldots,u_i]$ is not the unit ideal. 
	If $P$ is a minimal prime above $ [u_1,\ldots,u_i]$ and $i<n$, then $\trdeg_K^{\partial}(\kappa(P))>0$. 
\end{conjecture}

\begin{conjecture}[Ritt's Dimension Conjecture{ \cite[pg 178]{Ritt1950}}]\label{C:ritt}
	Consider $u_1,\ldots,u_i \in K\lbrace x_1,\ldots,x_n\rbrace$.  
	Suppose $[u_1,\ldots,u_i]$ is not the unit ideal. 
	If $P$ is a minimal prime above $[u_1,\ldots,u_i]$ and $i<n$, then $\trdeg_K^{\partial}(\kappa(P))\geq n-i$. 
\end{conjecture}

\begin{conjecture}[PIT Dimension Conjecture]\label{C:goosed}
	Let $A$ be a differentially finitely generated differential $K$-domain. 
	Let $f \in A$ be a non-unit element. Let $P$ be a minimal prime ideal over $[f]$. 
	\begin{equation}\label{E:general-slicing}
		\trdeg^{\partial}_K(A/P) \geq \trdeg^{\partial}_K(A)-1. 
	\end{equation}
\end{conjecture}

\begin{proposition}
Conjectures \ref{C:cohn}, \ref{C:ritt}, and \ref{C:goosed} are equivalent.
\end{proposition}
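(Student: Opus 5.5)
The plan is to prove the cycle of implications Ritt $\Rightarrow$ Cohn $\Rightarrow$ Ritt, together with PIT $\Rightarrow$ Ritt and Ritt $\Rightarrow$ PIT. The implication Conjecture~\ref{C:ritt} $\Rightarrow$ Conjecture~\ref{C:cohn} is immediate, since $n-i\geq 1$.

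\emph{Cohn $\Rightarrow$ Ritt.} Let $P$ be minimal over $[u_1,\ldots,u_i]$ with $i<n$, and put $d=\trdeg^{\partial}_K(\kappa(P))$. Fix a ranking and a characteristic set of $P$. By Proposition~\ref{P:characteristic-sets-and-dimension}, after renaming there are variables $y_1,\ldots,y_d$ with $K\lbrace y\rbrace\cap P=0$; call the remaining variables $z_1,\ldots,z_{n-d}$. Let $S=K\lbrace y\rbrace\setminus\{0\}$ and $F=\kappa(K\lbrace y\rbrace)$, which is a differential field of characteristic zero. Localizing at $S$ gives a bijection between the primes of $K\lbrace x\rbrace$ disjoint from $S$ and the primes of $F\lbrace z\rbrace$. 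Any prime between $[u]$ and $P$ is disjoint from $S$, so $PF\lbrace z\rbrace$ is still a minimal prime over $[u_1,\ldots,u_i]\subset F\lbrace z\rbrace$, and it is proper. Its residue field is $\kappa(P)$, and $\trdeg^{\partial}_F(\kappa(P))=d-d=0$. If $d<n-i$, then $i<n-d$, so Conjecture~\ref{C:cohn} over $F$ in the $n-d$ variables $z$ forces positive differential transcendence degree, a contradiction. Hence $d\geq n-i$.

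\emph{PIT $\Rightarrow$ Ritt.} We induct on $i$; the case $i=0$ is trivial since then $P=0$. Let $P$ be minimal over $[u_1,\ldots,u_i]$. Then $P$ contains some minimal prime $Q$ over $[u_1,\ldots,u_{i-1}]$, and by induction $\trdeg^{\partial}(\kappa(Q))\geq n-i+1$. Set $A=K\lbrace x\rbrace/Q$, a $\partial$-finitely generated domain. Any differential prime of $A$ lying between $[\bar u_i]$ and $P/Q$ pulls back to a prime containing $[u_1,\ldots,u_i]$, so it equals $P$; thus $P/Q$ is minimal over $[\bar u_i]$. If $\bar u_i=0$, then $P=Q$ and we are done. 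Otherwise $\bar u_i$ is a non-unit because $P$ is proper, and Conjecture~\ref{C:goosed} gives $\trdeg^{\partial}(A/(P/Q))\geq n-i$.

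\emph{Ritt $\Rightarrow$ PIT.} Write $A=K\lbrace x_1,\ldots,x_n\rbrace/Q$ with $\trdeg^{\partial}(A)=d$, let $f$ be a lift of the given element, and let $P$ be minimal over $Q+[f]$. Choose a characteristic set $G=(g_1,\ldots,g_s)$ of $Q$, so $s=n-d$ and $Q=\sat_{S_G}([G])$ by Theorem~\ref{T:fundamentals-of-char-sets}. Every prime between $[G]$ and $Q$ is disjoint from $S_G$ and hence contains $\sat_{S_G}([G])=Q$, so $Q$ is a minimal prime over $[G]$. The aim is to show that $P$ is a minimal prime over $[g_1,\ldots,g_s,f]$, which has $s+1$ generators. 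Conjecture~\ref{C:ritt} then gives $\trdeg^{\partial}(\kappa(P))\geq n-s-1=d-1$; if instead $s+1\geq n$, the bound $d-1\leq 0$ is automatic.

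This works cleanly when some $H\in S_G$ lies outside $P$. Inverting $H$ makes $Q$ coincide with $[G]$ locally, so minimality of $P$ over $Q+[f]$ transfers to minimality over $[G,f]$, and localizing at $H$ does not change $\kappa(P)$.

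The main obstacle is the case where every initial and separant of $G$ lies in $P$. In that case a prime $P'\subsetneq P$ could contain $[G,f]$ without containing $Q$. My first attempt would be to change the ranking, or make a generic $K$-linear change of the variables $x$, so that the new characteristic set of $Q$ has separants and initials outside $P$. This could use that $P\supsetneq Q$ is proper and that separants of a characteristic set are not in $Q$, though it is not clear they avoid $P$. If that fails, the fallback is to replace $G$ by Ritt--Raudenbush generators of $Q$ intersected with suitable localizations, and to count generators only up to the differential transcendence degree. This would again pass through the field $F$ used in the Cohn $\Rightarrow$ Ritt step, which reduces everything to the case $d=n-s$ with $s$ equations.
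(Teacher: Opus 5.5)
Your arguments for Ritt $\Rightarrow$ Cohn and PIT $\Rightarrow$ Cohn/Ritt (the induction on $i$) are correct and match the paper's. Your Cohn $\Rightarrow$ Ritt is also correct, and it is sturdier than the paper's. The paper inverts $n-m-1$ variables $Z$ chosen only from the supports of the $u_j$, and never checks that $P\cap K\lbrace Z\rbrace=0$. For example, with $u_1=x_1x_2$, protecting $x_1$ and taking $Z=\lbrace x_2\rbrace$, the component $P=[x_2]$ simply disappears. Your choice of $y$ as a differential transcendence basis modulo $P$, together with the argument by contradiction, is exactly what keeps $P$ minimal after localizing.

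\textbf{The gap: Ritt $\Rightarrow$ PIT is not finished.} First, your case split is misstated. Since $1\in S_G$, ``some $H\in S_G$ outside $P$'' always holds. The clean case is $H_G=\prod_{g\in G}I_gS_g\notin P$, and the hard case is that \emph{some} initial or separant lies in $P$, not all of them.

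That hard case genuinely occurs and breaks the argument. In $K\lbrace y_1,y_2,y_3\rbrace$ let $g=y_1'^2-4y_1y_2$, and let $Q$ be its general component (the saturation of $[g]$ by the separant $2y_1'$). Then $(g)$ is a characteristic set, $\trdeg^{\partial}\kappa(Q)=2$, and $Q$ is minimal over $[g]$. From $y_1'g'-(2y_1''-4y_2)g=4y_1(2y_2y_1''-y_1'y_2'-4y_2^2)$ we get $2y_2y_1''-y_1'y_2'-4y_2^2\in Q$, hence $y_2\in\sqrt{Q+[y_1]}$. Also $Q\subset[y_1,y_2]$: the kernel of $y_1\mapsto w^2$, $y_2\mapsto w'^2$ contains $g$ but not $y_1'$, so it contains $Q$. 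Hence $P=[y_1,y_2]$ is the minimal prime over $Q+[y_1]$. But $g\in[y_1]$, so $[g,y_1]=[y_1]\subsetneq P$, and Conjecture~\ref{C:ritt} applied to $[g,y_1]$ says nothing about $P$. PIT itself holds here ($1=2-1$); it is the argument that fails.

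Your first remedy cannot help in this example. $P$ contains all the partials $2y_1'$, $-4y_2$, $-4y_1$ of $g$, so the separant lies in $P$ for every choice of leader, and this persists under linear changes of variables. The fallback is not an argument: passing to $K\langle y\rangle$ only reduces to $\trdeg^{\partial}\kappa(P)=0$ and leaves the same minimality problem. What is missing is a way to choose $n-d$ elements $g_i\in Q$ such that $Q$ is minimal over $[g_1,\ldots,g_{n-d}]$ \emph{and} $P$ is minimal over $[g_1,\ldots,g_{n-d},f]$. Nothing in your proposal provides this.

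\textbf{The paper does not supply this idea either.} Its proof of Ritt $\Rightarrow$ PIT simply asserts that the lift of $\bar P$ is minimal over $[g_1,\ldots,g_h,f]$. The example above (with $h=1$, $g_1=g$) shows this assertion is false for a legitimate choice of generators.

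This step carries real content. Apply Conjecture~\ref{C:goosed} successively to (a component of) $V\times W\subset\AA^{2n}_\infty$ cut by the linear elements $x_i-y_i$. This would give $\dim^{\partial} C\geq\dim^{\partial}V+\dim^{\partial}W-n$ for every component $C$ of $V\cap W$, for \emph{arbitrary} irreducible $V,W\subset\AA^n_\infty$. That is a statement about all differential varieties, not just systems in affine space. So, as it stands, neither your proposal nor the paper's text proves Conjecture~\ref{C:ritt} $\Rightarrow$ Conjecture~\ref{C:goosed}.
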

\begin{proof}

$\eqref{C:ritt} \implies \eqref{C:cohn}$: Ritt's version gives lower bounds and Cohn's version does not. 
\medskip

$\eqref{C:cohn} \implies \eqref{C:ritt}$:
		Suppose Cohn's version.
		Suppose you have $m$ equations $u_1,\ldots,u_m$ in $K\lbrace x_1,\ldots,x_n\rbrace$ variables with $m<n$.
		Let $X=\lbrace x_1,\ldots,x_n\rbrace$.
		We will base change to $K(\lbrace Z\rbrace)$ where we partitioned $X=Y\amalg Z$ in such a way $u_i \notin K\lbrace Z \rbrace$. To do this we can think of ``protecting" some variable for each of our equations which we will not invert. 
		Since we have $m$ equations, there are $m$ forbidden variables (possibly with repeats). 
		Pick one variable that appears in the support for each of them.
		This leaves $n-m$ variables we may invert. 
		We will only invert $n-m-1$ variables. Call these variables the $Z$ variables. 
		Now we have $u_1,\ldots,u_m \in F\lbrace Y\rbrace$ where now where $ F=K(\lbrace Z\rbrace)$ and $\vert Y\vert =m+1$.
		By the Cohn's version of the dimension conjecture we have 
		$$ \trdeg_{F}^{\partial}( F\lbrace Y\rbrace/[u_1,\ldots,u_m]) \geq 1.$$
		Now, since differential transcendence degree is additive in towers, we have
		$$\trdeg^\partial_K(F\lbrace Y\rbrace/[u_1,\ldots,u_m])= \trdeg^{\partial}_{F}(F\lbrace Y\rbrace/[u_1,\ldots,u_m]) + \trdeg^{\partial}_K(F) \geq 1 + (m-n)-1 = m-n$$
  which proves the claim.
  The statement about differential transcendence degrees in towers of differential fields is \cite[pg 107, pg 112]{Kolchin1973}.
  When $A$ is $\partial$-finite type but not a field we use
  $$\trdeg_F^{\partial} (A) = \max\lbrace \trdeg_F^{\partial}\kappa(P)  : P \mbox{ minimal $\partial$-prime of $A$} \rbrace,$$ 
  where the result then follows since each $\kappa(P)$ is an extension of $F$, whence we can apply the theorem as stated in the reference.
\medskip
  
  $\eqref{C:goosed} \implies \eqref{C:ritt}$
		Suppose the PIT version.
		Let $P \supset [u_1,\ldots,u_i]$ be a prime ideal which is minimal with respect to this containment. 
		We will assume the PIT Dimension Conjecture and prove Cohn's Dimension Conjecture which is that $\trdeg_K^{\partial}(\kappa(P))>0$ if $i<n$.
		
		The proof is by induction on $i$.
		Let $A=K\lbrace x_1,\ldots,x_n\rbrace$.
		In the case $i=1$ we have $n\geq 2$.
		Since $u_1$ is a non-unit, the PIT Dimension Conjecture implies that $\trdeg^{\partial}_K(A/P) =\trdeg^{\partial}_K(A)-1=n-1 \geq n-1$. 
		
		Let $P_0$ be a minimal prime over $[u_1,\ldots,u_{i-1}]$. 
		Let $A=K\lbrace x_1,\ldots,x_n\rbrace/P_0$.
		By the inductive hypothesis we have that $\trdeg^{\partial}_K(A) \geq n-(i-1)$. 
		Consider $f=\bar{u}_i$ the image of $\bar{u}_i$ in $A$.
		Let $P$ be a minimal prime over $[f]$ in $A$.
		Since we assumed that $[u_1,\ldots,u_i]$ is not the unit ideal, $f$ is not a unit. 
		By the PIT Dimension Conjecture, we have that $\trdeg^{\partial}_K(A/P) \geq \trdeg^{\partial}_K(A) -1 \geq n-(i-1)-1=n-i$, which proves Ritt's version. 
  \medskip
  
  $\eqref{C:ritt} \implies \eqref{C:goosed}$:
		Suppose Ritt's version.
		Suppose  $A=K\lbrace x_1,\ldots,x_n\rbrace/P_0$ where $P_0$ is minimal over $[g_1,\ldots,g_h]$ and $\height^{\partial}(P_0)=h$ so that $\trdeg^{\partial}_K(A)=n-h$. 
		
		Now we will prove the PIT Dimension Conjecture.
		Let $\bar{f} \in A$ be a non-zero non-unit. 
		Let $\bar{P}$ be a minimal prime over $[\bar{f}]$. 
		Then $\bar{f}$ lifts to some $f \in K\lbrace x_1,\ldots,x_n\rbrace$ and $\bar{P}$ lifts to some prime $P$ where $\bar{P}=P/P_0$ which is minimal over $[g_1,\ldots,g_h,f]$. 
		Then applying Ritt's version gives $\dim^{\partial}(A/\bar{P}) = \dim^{\partial}(K\lbrace x_1,\ldots,x_n\rbrace/P) \geq  n-(h+1) = \dim^{\partial}(A)-1$.
\end{proof}

From now on we will refer to any of Conjecture~\ref{C:cohn}, Conjecture~\ref{C:ritt}, or Conjecture~\ref{C:goosed} as ``the Dimension Conjecture'' and conflate all of these forms.
For applications we will use Conjecture~\ref{C:goosed}.

\begin{proposition}\label{P:dimension-bounds}
	Assume the Dimension Conjecture (Conjecture~\ref{C:goosed}). 
	Let $P$ be a minimal prime over $[u_1,u_2,\ldots,u_n] \subset K\lbrace x_1,\ldots,x_n\rbrace$. Assume that $P$ has differential dimension zero. 
	Let $P' \supset [u_2,\ldots,u_n]$ be a prime ideal which is contained in $P$ and is minimal over $[u_2,\ldots,u_n]$ with respect to this property.
	We claim that this family has differential dimension at most one, i.e.,
	$$\trdeg^{\partial}(\kappa(P'))\leq 1,$$
	and that it has differential dimension exactly $1$ when $u_1$ is not in $P'$ and not a unit modulo $P'$.
\end{proposition}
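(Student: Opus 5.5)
The plan is to apply the PIT Dimension Conjecture (Conjecture~\ref{C:goosed}) once, to the domain $A = K\lbrace x_1,\ldots,x_n\rbrace/P'$ and the image $\bar u_1$ of $u_1$ in $A$. The key preliminary observation is that $P$ is not only a prime containing $P'$ and $u_1$; it is in fact minimal over $P'+[u_1]$. Indeed, $P' \supset [u_2,\ldots,u_n]$, so $P'+[u_1] \supset [u_1,\ldots,u_n]$. Hence any prime $Q$ with $P'+[u_1]\subset Q \subset P$ contains $[u_1,\ldots,u_n]$, and minimality of $P$ over $[u_1,\ldots,u_n]$ forces $Q=P$. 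Consequently, $\bar P = P/P'$ is a minimal prime over $[\bar u_1]$ in $A$. Here $A$ is a differentially finitely generated differential $K$-domain, since $P'$ is a prime differential ideal (minimal primes over differential ideals are differential in characteristic zero).

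We now split into cases according to $u_1$. If $u_1 \in P'$, then $P'$ itself contains $[u_1,\ldots,u_n]$ and lies in $P$, so $P'=P$ by minimality, and $\trdeg^{\partial}(\kappa(P'))=0\le 1$. If $\bar u_1$ were a unit modulo $P'$, then $P'+[u_1]$ would be the unit ideal; this is impossible since it is contained in $P$. So the remaining case is $u_1\notin P'$, in which $\bar u_1$ is a nonzero non-unit. Note that this case matches exactly the hypothesis of the second claim.

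In that case, Conjecture~\ref{C:goosed} applied to $A$, $f=\bar u_1$, and the minimal prime $\bar P$ gives
$$0=\trdeg^{\partial}_K(\kappa(P)) = \trdeg^{\partial}_K(A/\bar P) \ge \trdeg^{\partial}_K(A)-1 = \trdeg^{\partial}_K(\kappa(P'))-1,$$
so $\trdeg^{\partial}_K(\kappa(P'))\le 1$. For the reverse inequality, we use that $P'$ is minimal over $n-1$ equations in $n$ variables, so Ritt's form of the Dimension Conjecture (Conjecture~\ref{C:ritt}, equivalent to the PIT version by the preceding proposition) gives $\trdeg^{\partial}_K(\kappa(P'))\ge n-(n-1)=1$. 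There is a caveat here: the proposition defines $P'$ as minimal only among primes over $[u_2,\ldots,u_n]$ contained in $P$. Any prime minimal over $[u_2,\ldots,u_n]$ in the absolute sense lying below $P'$ also lies in $P$, so $P'$ is genuinely a minimal prime and Conjecture~\ref{C:ritt} applies. Alternatively, the lower bound follows from $P'\subsetneq P$ together with the fact that a strict inclusion of differential primes of differential dimension zero is impossible when $P$ is minimal; but the Ritt-version argument is cleaner.

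The main obstacle is the minimality check: verifying that $\bar P$ is minimal over $[\bar u_1]$, and that $P'$ is a minimal prime of $[u_2,\ldots,u_n]$ in the absolute sense. This is what lets both conjectural inequalities be invoked. A secondary point is confirming that transcendence degrees are unchanged in passing between $A/\bar P$ and $K\lbrace x\rbrace/P$; this holds because $A/\bar P \cong K\lbrace x_1,\ldots,x_n\rbrace/P$.
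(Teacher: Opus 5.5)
Your argument is correct, and its core step is the paper's: apply Conjecture~\ref{C:goosed} to $A=K\lbrace x_1,\ldots,x_n\rbrace/P'$ and $f=\bar u_1$, with $P/P'$ as the minimal prime over $[f]$, to get $0\geq \trdeg^{\partial}(\kappa(P'))-1$.

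The paper's proof stops there. It supposes $\bar u_1\neq 0$, asserts without checking that $P/P'$ is minimal over $[\bar u_1]$, and never proves the ``exactly $1$'' clause. So the following are genuine additions rather than a different route:
\begin{itemize}
\item your verification that $P/P'$ is minimal over $[\bar u_1]$;
\item your treatment of the case $u_1\in P'$, which forces $P'=P$;
\item your lower bound via Ritt's form, using that $P'$ is an honest minimal prime of $[u_2,\ldots,u_n]$.
\end{itemize}
Note also that your lower bound, combined with the case $u_1\in P'$ (where $P'=P$ has differential dimension $0$), shows that case cannot occur under the Dimension Conjecture. The non-degeneracy hypothesis in the second claim is therefore automatic.

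Drop your ``alternative'' justification of the lower bound. Strict inclusions of differential primes that both have differential dimension zero certainly exist: for example $[x']\subsetneq[x]$ in $K\lbrace x\rbrace$, with $[x]$ minimal over itself. Ruling this out for a $P'$ that is minimal over $n-1$ equations is precisely the content of the Dimension Conjecture, not an independent fact.
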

\begin{proof}
	As in the setup of the Dimension Conjecture we let $R=K\lbrace x_1,\ldots,x_n\rbrace/P'$. Let $f=\overline{u}_1$ be the image of $u_1$ in $P'$, which we suppose is non-zero, and let $Q=P$ be the minimal prime above it. 
	
	From the Dimension Conjecture (Conjecture~\ref{C:goosed}). $$\trdeg^{\partial}(\kappa(P))\geq\trdeg^{\partial}(\kappa(P'))-1.$$
	But $\trdeg^{\partial}_K(\kappa(P))=0$ which implies $\trdeg^{\partial}(\kappa(P'))\leq 1$. 
\end{proof}

\begin{proposition}\label{P:behavior-of-intermediates}
	Assume the Dimension Conjecture (Conjecture~\ref{C:goosed}). 
	Let $P$ be a minimal prime over $[u_1,\ldots,u_n] \subset K\lbrace x_1,\ldots,x_n\rbrace$. 
	The following are equivalent:
	\begin{enumerate}
		\item $\trdeg^{\partial}(\kappa(P))=0$.
		\item For each $i$ there exists a prime ideal $P_{i-1} \subset P$ such that
  \begin{enumerate}
  \item $[u_i,\ldots,u_n] \subset P_{i-1}$,
  \item $\trdeg^{\partial}_K(\kappa(P_i)) = i$, and 
  \item $u_i$ in $K\lbrace x_1,\ldots,x_n\rbrace/P_i$ is non-zero and not a unit, 
  \end{enumerate}
  and such that $P_{i-1}$ is minimal with respect to these properties.
	\end{enumerate}
\end{proposition}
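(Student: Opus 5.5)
The plan is to read condition (2) as describing a chain of primes
$$P = P_0 \supseteq P_1 \supseteq \cdots \supseteq P_n,$$
where $P_{i-1}$ is minimal over $[u_i,\ldots,u_n]$ among primes contained in $P$ (with $P_n$ minimal over the zero ideal, i.e.\ $P_n=0$), and $\trdeg^{\partial}_K(\kappa(P_i))=i$. I would prove both implications by descending along this chain. The tools are Ritt's form (Conjecture~\ref{C:ritt}) for lower bounds and the PIT form (Conjecture~\ref{C:goosed}) for upper bounds, in the same way as in Proposition~\ref{P:dimension-bounds}.

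For $(1)\Rightarrow(2)$ I would construct the $P_i$ inductively, starting from $P_0=P$, which has differential dimension $0$ by hypothesis. Suppose $P_{i-1}\subseteq P$ has been built, is minimal over $[u_i,\ldots,u_n]$, and has $\trdeg^{\partial}_K(\kappa(P_{i-1}))=i-1$. Since $[u_{i+1},\ldots,u_n]\subseteq P_{i-1}$ and the minimal primes over a differential ideal lie under every prime containing it, there is a prime $P_i\subseteq P_{i-1}$ minimal over $[u_{i+1},\ldots,u_n]$.

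\emph{Lower bound.} Ritt's form, applied to $n-i$ equations in $n$ variables, gives $\trdeg^{\partial}_K(\kappa(P_i))\ge i$.

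\emph{Upper bound.} Set $A=K\lbrace x_1,\ldots,x_n\rbrace/P_i$. I claim $P_{i-1}/P_i$ is a minimal prime over $[\bar u_i]$ in $A$. Indeed,
$$[u_i,\ldots,u_n]\subseteq P_i+[u_i]\subseteq P_{i-1},$$
so any prime between $P_i+[u_i]$ and $P_{i-1}$ contains $[u_i,\ldots,u_n]$ and therefore equals $P_{i-1}$ by minimality. The element $\bar u_i$ is a non-unit because it lies in $P_{i-1}/P_i$. Conjecture~\ref{C:goosed} then gives
$$i-1=\trdeg^{\partial}_K(\kappa(P_{i-1}))\ge \trdeg^{\partial}_K(\kappa(P_i))-1,$$
hence $\trdeg^{\partial}_K(\kappa(P_i))\le i$, and so equality holds.

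\emph{Nonvanishing of $u_i$.} If $u_i\in P_i$, then $P_i\supseteq[u_i,\ldots,u_n]$ and $P_i\subseteq P_{i-1}$, so minimality of $P_{i-1}$ forces $P_i=P_{i-1}$. This contradicts the dimensions $i\ne i-1$. Hence $u_i$ is nonzero and not a unit in $K\lbrace x_1,\ldots,x_n\rbrace/P_i$, which completes the inductive step.

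For $(2)\Rightarrow(1)$ I would use the $i=1$ stage of the chain. The prime $P_0\subseteq P$ contains $[u_1,\ldots,u_n]$, and $P$ is minimal over this ideal, so $P_0=P$. Condition (b) at level $0$ then gives $\trdeg^{\partial}(\kappa(P))=0$. If instead the statement's indexing is read so that only $\trdeg^{\partial}_K(\kappa(P_1))=1$ is given, I would apply Conjecture~\ref{C:goosed} together with condition (c) for $u_1$. This bounds $\trdeg^{\partial}(\kappa(P))$ below by $0$, not above, so in that reading I would instead use additivity along the chain, with each slice by $u_i$ dropping the dimension by exactly one.

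The main obstacle is the indexing in the statement, which is internally inconsistent ($P_{i-1}$ versus $P_i$ in (b) and (c)). Most of the work is fixing a consistent reading and checking that the minimality used in the upper-bound step, namely that $P_{i-1}/P_i$ is minimal over $[\bar u_i]$, really holds. The dimension estimates themselves are direct applications of the two forms of the Dimension Conjecture.
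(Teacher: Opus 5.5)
Your argument for (1)$\Rightarrow$(2) is correct and is essentially the paper's. The paper builds the same chain $0=P_n\subseteq\cdots\subseteq P_0=P$. It then gets $\trdeg^{\partial}_K(\kappa(P_i))=i$ by noting that the degree falls from $n$ to $0$ in $n$ steps, each of size at most one by Conjecture~\ref{C:goosed}. Your stepwise sandwich, with Ritt's form giving the lower bound and the PIT form the upper bound, has the same content. You also do two things the paper does not: you state correctly that $P_{i-1}/P_i$ is minimal over $[\bar u_i]$ (the paper writes this with garbled indices), and you actually verify (c).

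The gap is your fallback for (2)$\Rightarrow$(1) in the reading where (b) only asserts $\trdeg^{\partial}_K(\kappa(P_i))=i$ for $1\le i\le n$. The claim that each slice by $u_i$ drops the dimension by exactly one does not follow. Condition (c) together with Conjecture~\ref{C:goosed} gives only $\trdeg^{\partial}_K(\kappa(P_{i-1}))\ge\trdeg^{\partial}_K(\kappa(P_i))-1$, and cutting by a nonzero non-unit can leave the differential transcendence degree unchanged.

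In that reading the implication is in fact false. In $K\lbrace x,y\rbrace$ take $u_1=y$ and $u_2=y'$. Then $[u_1,u_2]=[y]$ is prime, so $P=[y]$. The chain $P_2=0$, $P_1=[y']$, $P_0=[y]$ satisfies (a)--(c) for $i=1,2$: since $K\lbrace x,y\rbrace/[y']\cong K\lbrace x\rbrace[y]$ with $y$ a constant, we get $\trdeg^{\partial}_K(\kappa(P_1))=1$, and $y$ is a nonzero non-unit there. Yet $\trdeg^{\partial}_K(\kappa(P))=1$.

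The paper's one-line converse (``we have precisely the non-degeneracy hypotheses that tell us that at each stage the differential transcendence degree drops by one'') rests on exactly this unjustified step. So drop the fallback and commit to the reading in which $\trdeg^{\partial}_K(\kappa(P_0))=0$ is part of (2). Your observation that $P_0=P$ then settles the converse, which becomes a tautology. Only (1)$\Rightarrow$(2) is used later, in Theorem~\ref{T:moving-dimension}, and that part of your proposal is sound.
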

\begin{proof}
	We give some setup:  consider now a chain of primes $0=P_n \subset P_{n-1} \subset \cdots \subset P_0 $ where $P_0=P$, $P_{1}$ is minimal over $[u_2,\ldots,u_n]$ which is contained in $P_0$, and for $i>0$ we proceed inductively saying that $P_{i}$ is a minimal prime over $[u_{i},\ldots,u_n]$ which is contained in $P_{i-1}$ etc.
	Here is a diagram:
	$$
	\begin{tikzcd}
	P_{n-1} \arrow[r, hook] & P_{n-2} \arrow[r, hook] & \cdots \arrow[r, hook] & P_1 \arrow[r, hook] & P_0=P  \\[4pt]
	{[u_n]} \arrow[u, hook] \arrow[r, hook]
	& {[u_{n-1},u_n]} \arrow[u, hook] \arrow[r, hook]
	& \cdots \arrow[r, hook]
	& {[u_2,\ldots,u_n]} \arrow[u, hook] \arrow[r, hook]
	& {[u_1,\ldots,u_n]} \arrow[u, hook].
	\end{tikzcd}
	$$
	
	We will show that if $\trdeg^{\partial}(\kappa(P))=0$, then all the intermediate transcendence degrees must behave predictably, as in the statement of this proposition.
	The differential transcendence degree from $\kappa(P_n)=K(\lbrace x_1,\ldots,x_n\rbrace)$ to $\kappa(P_0)$ starts at $n$ and drops to zero. At each stage it can only drop by at most one by the dimension conjecture. 
	Indeed, since $P_i$ is minimal over $P_{i-1}+[u_1,\ldots,u_i]$, we know that $\bar{P}_i \subset \bar{R}_{i-1} = R/P_{i-1}$ is minimal over $\bar{u}_i \in R/P_{i-1}$ where $\bar{u}_i$ is the image of $u_i$ in $R/P_{i-1}$.
	This implies that $\bar{P}_i$ has height at most one, and by the dimension conjecture $\trdeg^{\partial}_K(R/P_{i}) \geq \trdeg_K^{\partial}(R/P_{i-1})-1$.
	It follows that the differential transcendence degree drops at every possible step by exactly one and that $\trdeg^{\partial}(\kappa(P_i))=i$. 
	
	Conversely, we have precisely the non-degeneracy hypotheses that tell us that at each stage the differential transcendence degree drops by one. 		
\end{proof}

\section{Ritt's Cycle Trick}
The following elementary Lemma is useful for fixing our notation (and for sanity checks).
Many of the following sections involve delicate manipulations of matrices and transversals.
The Lemma essentially says that row and column operations are left and right matrix multiplication by elementary matrices, and spells out how rearrangements of matrices modify transversals. 
We trust that many readers could work this out on their own, but we have included it for convenience. 
\begin{lemma}\label{L:permutations-on-transversals}
	Suppose that $A=(a_{i,j})$ is an order matrix for a system $u_1=0,\ldots,u_n=0$ with variables $x_1,\ldots,x_n$. Suppose that 
	$\tdet(A) = \sum_{i=1}^n a_{i,\rho(i)}.$
	Then, if we permute the equations by $\sigma \in S_n$ and we permute the variables by $\tau \in S_n$ we get a new system $v_1,\ldots,v_n$ in variables $y_1,\ldots,y_n$ with order matrix $B=(b_{i,j})$ and maximum transversal $ \tdet(B) = \sum_{i=1}^n b_{i,\rho'(i)} $
	where 
	\begin{equation}\label{eqn:right-transformation}
		\rho' = \tau^{-1}\rho \sigma. 
	\end{equation}
\end{lemma}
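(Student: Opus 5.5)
The plan is to read the new system directly off the definitions, entry by entry, and then reindex the maximizing permutation. Write the permuted system as $v_i = u_{\sigma(i)}$ in the variables $y_j = x_{\tau(j)}$. The convention here is: the $i$-th new equation is the $\sigma(i)$-th old one, and the $j$-th new variable is the $\tau(j)$-th old one. With this convention the order of a differential polynomial in a given variable does not depend on how we label things, so
$$ b_{i,j} = \ord^{\partial}_{y_j}(v_i) = \ord^{\partial}_{x_{\tau(j)}}(u_{\sigma(i)}) = a_{\sigma(i),\tau(j)} .$$
In matrix terms, $B = P_\sigma A P_\tau^{T}$ for the corresponding permutation matrices. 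This identity holds under either the weak or the strong convention, since both are intrinsic to the pair (polynomial, variable).

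Next I would show that transversals of $A$ and of $B$ are in bijection and have the same values. For any $\pi \in S_n$, substitute $k=\sigma(i)$ to get
$$ \sum_{i} b_{i,\pi(i)} = \sum_i a_{\sigma(i),\tau\pi(i)} = \sum_k a_{k,\tau\pi\sigma^{-1}(k)} .$$
The map $\pi \mapsto \tau\pi\sigma^{-1}$ is a bijection of $S_n$. Hence the set of transversal values of $B$ equals that of $A$, and in particular $\tdet(B)=\tdet(A)$. The same bookkeeping works with $-\infty$ entries, because the substitution only reorders the summands.

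To finish, solve $\tau\rho'\sigma^{-1} = \rho$, which gives $\rho' = \tau^{-1}\rho\sigma$. This is a maximizing transversal for $B$, as claimed in \eqref{eqn:right-transformation}.

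The only real obstacle is the convention: whether "permute the equations by $\sigma$" means $v_i = u_{\sigma(i)}$ or $v_{\sigma(i)} = u_i$. The opposite choice would turn the formula into $\tau\rho\sigma^{-1}$. So I would state the convention explicitly at the start and choose it so that the displayed formula comes out as written; a one-line check with $n=2$ and a transposition confirms this.
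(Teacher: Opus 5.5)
Your proof is correct and is essentially the paper's. The paper uses the same conventions, $b_{i,j}=a_{\sigma(i),j}$ and $b_{i,j}=a_{i,\tau(j)}$, and reindexes the transversal sum; it treats rows and columns in two separate steps ($\rho\mapsto\rho\sigma$, then $\rho\mapsto\tau^{-1}\rho$) and composes. Your explicit bijection $\pi\mapsto\tau\pi\sigma^{-1}$ also justifies $\tdet(B)=\tdet(A)$ and the maximality of $\rho'$, which the paper only asserts.

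One small caveat: your proposed $n=2$ sanity check cannot distinguish the conventions, because every element of $S_2$ is its own inverse, so a $3$-cycle in $S_3$ would be needed for that check to mean anything.
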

In the lemma, $\tau$ corresponds to row operations and $\sigma$ corresponds to column operations.
  
\begin{proof}
	Suppose $\rho \in S_n$ defines the transversal for an order matrix $A$ so that 
	$$ \tdet(A) = \sum_{i=1}^n a_{i, \rho(i)}=\sum_{j=1}^n a_{\rho^{-1}(j),j}. $$
	Let $B$ be the matrix obtained by permuting the equations so that $b_{i,j} = a_{\sigma(i),j}$.
  Then $a_{i,j} = b_{\sigma^{-1}(i),j}$ and
  $$\tdet(B) = \tdet(A)=\sum_{j=1}^n a_{\rho^{-1}(j),j}= \sum_{i=1}^n b_{\sigma^{-1}(\rho^{-1}(j)),j}$$
  so that $\sigma^{-1}\rho^{-1} = (\rho\sigma)^{-1}$ and $\rho \mapsto \rho \sigma$.
	
	Similarly, if we permute the variables so that $b_{i,j} = a_{i,\tau(j)}$ and $b_{i,\tau^{-1}(j)} = a_{i,j}$, then
  $$\tdet(B) = \tdet(A) = \sum_{i=1}^n a_{i,\rho(i)}= \sum_{i=1}^n b_{i,\tau^{-1}(\rho(i))}$$
  and we have that $\rho \mapsto \tau^{-1}\rho$. 
\end{proof}

\begin{remark}
	The above lemma defines a right action of permutations on the diagonal of the order matrix. If we use permutations of the rows to define the transversal of the tropical determinants we still get a contravariant action. To see this, suppose $\tdet(A) = \sum_{j=1}^n a_{\lambda(j),j}.$
	In this case $\lambda = \rho^{-1}$ and then $\lambda' = (\rho')^{-1} = (\tau^{-1} \rho \sigma)^{-1} = \sigma^{-1} \rho^{-1} \tau = \sigma^{-1} \lambda \tau$ so that 
	\begin{equation}\label{eqn:left-transformation}
		\lambda' = \sigma^{-1} \lambda \tau.
	\end{equation}
\end{remark}

The following definition will greatly facilitate our manipulations of transversals.

\begin{definition}
  Let $A=(a_{i,j})$ be an $n\times n$ matrix with entries in $\ZZ_{\geq 0} \cup \lbrace -\infty \rbrace$.
  Let $\tau = (i_1i_2\ldots i_s) \in S_n$ be a cycle.
  Then we define
	\begin{equation}\label{E:cyclic-sum}
	a_{\tau} = a_{i_1,i_2}+a_{i_2,i_3} +\cdots + a_{i_s,i_1}.
	\end{equation}
  \end{definition}

  \begin{remark}
Every tropical determinant can be written as a sum of such cyclic factors. 
Indeed, if $\tdet(A) = \sum_{i=1}^n a_{i,\rho(i)}$ for $\rho \in S_n$ and $\rho=\tau_1\tau_2\ldots\tau_c$ is its product of disjoint cycles, then 
\begin{equation}\label{E:cyclic-sum-decomposition}
\tdet(A) = \sum_{j=1}^c a_{\tau_j},
\end{equation}
where $a_{\tau_j}$ is the cyclic-sum as defined in \eqref{E:cyclic-sum}.

\begin{definition}
We will call \eqref{E:cyclic-sum-decomposition} a \defi{cyclic sum decomposition} of the transversal.
\end{definition}

Our convention is that the symmetric group acts on the left, so that we read permutations from right to left (i.e., $(12)(13) = (132)$).
\end{remark}
  
\begin{example}\label{E:disjoint-cycles}
	Suppose that we have $A=(a_{i,j})$ which is a $5\times 5$ matrix with entries in $\ZZ_{\geq 0} \cup \lbrace -\infty \rbrace$. 
	Suppose that $\tdet(A) = \sum_{i=1}^n a_{i,\rho(i)}$ where $\rho \in S_5$ written as a product of disjoint cycles is  $\rho =(13)(245)$.
	Then we have 
	 $$ \tdet(A) = a_{1,3} + a_{2,4} + a_{3,1} + a_{4,5} + a_{5,2}.$$
	We can regather this tropical determinant into factors corresponding to its cycles $(13)$ and $(245)$. 
	$$ \tdet(A) = (a_{1,3} + a_{3,1}) + (a_{2,4}+a_{4,5}+a_{5,2}) = a_{(13)} + a_{(245)}. $$

\end{example}

In our reductions we make extensive use of \defi{Ritt's cycle trick}, which is an efficient way to formulate the typical rearrangements one makes when working with transversals. 
\begin{lemma}[Ritt's Cycle Trick]\label{lem:ritt-permutation}
	Let $A=(a_{i,j})$ be an $n\times n$ matrix with entries in $\ZZ_{\geq 0} \cup \lbrace -\infty \rbrace$. 
	Suppose that $\tdet(A) = \sum_{i=1}^n a_{i,i}.$
	If $\tau \in S_n$ is a cycle, then 
  \[
  a_{\tau} \leq \sum_{i \in \supp(\tau)}a_{i,i}.
  \]
\end{lemma}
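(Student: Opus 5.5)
The plan is to compare the transversal coming from the identity with the transversal coming from $\tau$ itself, viewed as a permutation in $S_n$, and then cancel the common terms. Write $\tau=(i_1 i_2\ldots i_s)$ and let $S=\supp(\tau)=\{i_1,\ldots,i_s\}$. For $i\notin S$ the permutation $\tau$ fixes $i$, and on $S$ it satisfies $\tau(i_k)=i_{k+1}$ (indices mod $s$). So the transversal of $A$ attached to $\tau$ is
\[
\sum_{i=1}^n a_{i,\tau(i)} = a_{\tau} + \sum_{i\notin S} a_{i,i},
\]
using the definition \eqref{E:cyclic-sum} of the cyclic sum. By definition of $\tdet$ as a maximal transversal, and the hypothesis $\tdet(A)=\sum_{i=1}^n a_{i,i}$, we get
\[
a_{\tau} + \sum_{i\notin S} a_{i,i} \;\leq\; \sum_{i\in S} a_{i,i} + \sum_{i\notin S} a_{i,i}.
\]

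The remaining step is to cancel $\sum_{i\notin S}a_{i,i}$ from both sides. This is the only point needing care, because entries may be $-\infty$. If every $a_{i,i}$ with $i\notin S$ is finite, the cancellation is legitimate in $\ZZ\cup\{-\infty\}$, since adding a finite integer is an order-preserving bijection. That gives $a_\tau\le\sum_{i\in S}a_{i,i}$.

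If some $a_{i,i}=-\infty$, then $\tdet(A)=-\infty$, so every transversal of $A$ equals $-\infty$. The cancellation then fails, and I would handle this degenerate case separately. The inequality can genuinely fail here. For example, take $n=3$, $\tau=(12)$ with $a_{12}=a_{21}=0$, $a_{11}=a_{22}=-\infty$, and every entry in row $3$ and column $3$ equal to $-\infty$; then $a_\tau=0>-\infty$.

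So I would state that the lemma is to be read under the standing assumption $\tdet(A)>-\infty$, i.e.\ the diagonal entries are finite. This is exactly the situation in which the paper applies it: a finite-dimensional component with a finite Jacobi bound. Under that assumption the cancellation argument above is complete. Alternatively, the conclusion can be weakened to hold whenever the diagonal entries outside $\supp(\tau)$ are finite. I expect this $-\infty$ bookkeeping to be the only real obstacle; the rest is the one-line comparison of transversals.
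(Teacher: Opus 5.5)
Your argument is the paper's: write the transversal of $\tau$ as $a_{\tau}+\sum_{i\notin\supp(\tau)}a_{i,i}$, bound it by $\tdet(A)=\sum_i a_{i,i}$, and cancel the diagonal terms off the support. Your caveat is also correct and is something the paper glosses over: the cancellation needs $a_{i,i}\neq-\infty$ for $i\notin\supp(\tau)$, and your $3\times 3$ example is a genuine counterexample to the lemma as literally stated. One qualification on your side remark: the paper does not impose $\tdet(A)>-\infty$ everywhere it invokes the lemma, since Ritt's first form only requires $a_{1,1}\neq-\infty$.
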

\begin{proof}
	Since the transversal $A_{\tau}$ is less than or equal to $\tdet(A)$, we have that 
	$$a_{\tau}+\sum_{i \notin \supp(\tau)} a_{i,i} = A_{\tau} \leq \tdet(A) = \sum_{i \in \supp(\tau)}a_{i,i} +  \sum_{i \notin\supp(\tau)} a_{i,i}.$$
	Cancelling the terms not in the support of $\tau$ gives the result.  
\end{proof}

\section{Ritt's Reductions: Divisions That Preserve Jacobi numbers}
\label{section:ritts-reductions}

We now introduce Ritt's first and second reductions (Proposition~\ref{P:ritt-first} and Proposition~\ref{P:ritt-second} respectively). 
These are the key steps in Ritt's proof of the linear case of the Jacobi Bound Conjecture  \cite[pg 310]{Ritt1935} and were discovered before the theory of characteristic sets were developed. Reflecting on Ritt's proofs, we realized that his claims about Jacobi numbers still hold for nonlinear systems. Since this observation is crucial to the proof of our main theorem (see Sections \ref {S:ritt-pencil} and  \ref{section:Dimension-Conjecture-Implies-Jacobi-Bound-Conjecture}), we exposit his proof in the next few sections, and crucially fix several minor gaps and one major gap (Section \ref{S:filling-ritts-gap}).
\\

We now give the hypotheses of Ritt's first reduction (Proposition~\ref{P:ritt-first}) and Ritt's second reduction (Proposition~\ref{P:ritt-second}) names. 
We will call them Ritt's first form and Ritt's second form respectively. 
\begin{definition}\label{D:ritt-forms}
	Let $R=K\lbrace x_1,\ldots,x_n\rbrace$.
	Let $I = [u_1,u_2, \ldots,u_n] \subset R$, with order matrix 
	$A=(a_{i,j})$, where $a_{i,j} = \ord^{\partial}_{x_j}(u_i)$.
  We say that the system is in \defi{Ritt's first form} if
	\begin{equation}\label{E:first-form}
		\tdet(A) = \sum_{i=1}^n a_{i,i} \quad  \text{ and } \quad a_{2,1}\geq a_{1,1} \neq -\infty, 
	\end{equation}  
and in \defi{Ritt's second form} if
	\begin{eqnarray}
  \tdet(A) = a_{1,n} + \left(\sum_{i=2}^{n-1}  a_{i,i}\right) + a_{n,1}, \label{E:second-form-1}\\
  \tdet(\widetilde{A}_{n,n})=\sum_{i=1}^{n-1}a_{i,i} \neq -\infty, \text{ and } \label{E:second-form-2}\\
	a_{n,1}=\max \lbrace  a_{1,1}, a_{2,1}, \ldots, a_{n,1}\rbrace\label{E:second-form-3}.  
  \end{eqnarray}
We note that $\ref{E:second-form-2}$ and $\ref{E:second-form-3}$ imply that $a_{1,1}, a_{n,1} \neq -\infty$.  
  \end{definition}

If a system is not in Ritt's first or second form, we report that extensive experimentation in Magma shows that Ritt any division is likely to increase the strong Jacobi number;
see Example~\ref{E:J-increasing-second-form}. In particular, as far as we can tell, Ritt identified \emph{precisely} the two situations in which Ritt division does not increase the Jacobi number of a system.

\begin{example}\label{E:J-increasing-second-form}
	Consider the system of differential equations 
	$$\begin{cases}
		u_1=x + x'+y^{(2)}+z^{(3)},\\
		u_2=x'+y'+z', \\
		u_3=x^{(2)}+y'+z'.
	\end{cases}$$
	This has order matrix
$$\begin{pmatrix}
1 & 2 & \boxed{3}\\
1 & \boxed{1} & 1\\
\boxed{2} & 1 & 1\\
\end{pmatrix} $$
and Jacobi number 6; the boxed terms form a maximal transversal. This system satisfies
\eqref{E:second-form-2} and the first part of \eqref{E:second-form-1} but not the second part of \eqref{E:second-form-2}.
	Ritt dividing the third equation by the first gives the system 
	$$\begin{cases}
		u_1=x + x'+y^{(2)}+z^{(3)},\\
		u_2=x'+y'+z', \\
		v_3=x'+y' - y^{(3)} +z' - z^{(4)}.
  \end{cases}$$
  which has order matrix
  $$\begin{pmatrix}
1 & \boxed{2} & 3\\
\boxed{1} & 1 & 1\\
1 & 3 & \boxed{4}\\
\end{pmatrix} $$
and  Jacobi number 7.
  
\end{example}

\begin{proposition}[Reduction from Ritt's first form]\label{P:ritt-first}
	Let $R=K\lbrace x_1,\ldots,x_n\rbrace$.
	Let $I = [u_1,u_2, \ldots,u_n] \subset R$, with order matrix 
	$A=(a_{i,j})$, where $a_{i,j} = \ord^{\partial}_{x_j}(u_i)$.
	Suppose that this system is in Ritt's first form (Definition \ref{D:ritt-forms}). 
	Let $h = s u_2 - Q u_1$ be the Ritt remainder after $\partial$-division of $u_2$ by $u_1$ in the variable $x_1$ (so only separants are used and $s$ is in the multiplicative set generated by $s_{1,1}$ and $Q\in R[\partial]$).
  
	Then $J(u_1,h,u_3,\ldots,u_n) \leq J(u_1,u_2,u_3,\ldots,u_n)$. 
\end{proposition}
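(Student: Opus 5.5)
The plan is to bound the entries of the new order matrix $B$ of $(u_1,h,u_3,\ldots,u_n)$ entry by entry, and then compare every transversal of $B$ with transversals of $A$.

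\emph{Orders of $h$.} Set $k=a_{2,1}-a_{1,1}\geq 0$, which makes sense by Ritt's first form. The $\partial$-division of $u_2$ by $u_1$ in $x_1$ only uses the derivatives $u_1,u_1',\ldots,u_1^{(k)}$, so $Q$ has order at most $k$ with coefficients in $R$. The multiplier $s$ is a power of the separant $\partial u_1/\partial x_1^{(a_{1,1})}$, whose order in each $x_j$ is at most $a_{1,j}$ (in the strong convention as well). The coefficients of $Q$ are built from $u_2$ and the separant, and their orders are controlled by the same quantities. Tracking the algorithm should therefore give
\[
\ord^{\partial}_{x_1}(h)\leq a_{1,1},\qquad \ord^{\partial}_{x_j}(h)\leq \max(a_{2,j},\,a_{1,j}+k)\quad (j\geq 2),
\]
with $-\infty$ when both entries on the right are $-\infty$. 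I will check this by a short induction on the steps of the partial division (\cite[Algorithm 2.2]{Hubert2000}), since each step subtracts a multiple of some $u_1^{(r)}$ with $r\leq k$. All other rows of $B$ agree with those of $A$.

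\emph{Comparing transversals.} Take $\rho\in S_n$ and set $j=\rho(2)$. If $j=1$, or if $b_{2,j}\leq a_{2,j}$, then the $\rho$-transversal of $B$ is at most the $\rho$-transversal of $A$, which is at most $\tdet(A)$; for $j=1$ this uses $b_{2,1}\leq a_{1,1}\leq a_{2,1}$. Otherwise $j\neq 1$ and $b_{2,j}\leq a_{1,j}+a_{2,1}-a_{1,1}$. It then suffices to show
\[
T+a_{2,1}\leq \tdet(A)+a_{1,1},\qquad T:=a_{1,\rho(1)}+a_{1,j}+\sum_{i\geq 3}a_{i,\rho(i)}.
\]
Here $T+a_{2,1}$ is a sum over a row/column multigraph in which row $1$ and column $1$ each have degree $2$ and every other row and column has degree $1$.

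\emph{Key step.} I will use tropical (Egerváry/LP) duality. Since $\tdet(A)=\sum a_{i,i}$ is attained on the diagonal, there exist potentials $p_i,q_j\in\RR$ with $a_{i,j}\leq p_i+q_j$ for all $i,j$, and with equality on the diagonal. Summing these inequalities over the multigraph gives $T+a_{2,1}\leq \sum_i(p_i+q_i)+p_1+q_1=\tdet(A)+a_{1,1}$, as required.

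\emph{Main obstacle.} The difficulty is the $-\infty$ entries. If some diagonal entry is $-\infty$, then $\tdet(A)=-\infty$; I would either treat this degenerate case separately or restrict the potentials to the finite bipartite support graph. As a fallback that avoids duality, I would decompose the multigraph into disjoint cycles plus one alternating path from row $1$ to column $1$. Alternate edges of the path, completed by the entry $a_{1,1}$ and the cycles, then give two genuine transversals of $A$. Applying Ritt's Cycle Trick (Lemma~\ref{lem:ritt-permutation}) to the cycles against the diagonal should give the same inequality, using that $a_{1,1}\neq-\infty$.
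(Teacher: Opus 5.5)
Your argument is correct whenever $\tdet(A)\neq-\infty$, and it takes a genuinely different route from the paper.

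\textbf{Comparison with the paper's proof.} Your entry bounds are the paper's division inequality, with the sharper $b_{2,1}\leq a_{1,1}$ coming straight from the definition of $\partial$-division. From there the routes diverge.

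The paper fixes a maximizing $\rho$ and splits into three cases: $\rho(1)=1$; $1$ and $2$ in different $\rho$-cycles; or $1$ and $2$ in the same $\rho$-cycle. In each case it substitutes $b_{2,\rho(2)}\leq a_{1,\rho(2)}+a_{2,1}-a_{1,1}$, regroups into cycles through $1$, and bounds each cycle by Ritt's Cycle Trick (Lemma~\ref{lem:ritt-permutation}).

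You instead note that only the degree pattern of the multigraph behind $T+a_{2,1}$ matters, and you handle every $\rho$ at once with Egerv\'ary potentials. To make that step airtight, obtain the potentials from LP duality for the assignment problem on $E=\{(i,j): a_{i,j}\neq-\infty\}$:
\begin{itemize}
\item the diagonal lies in $E$ and attains the LP optimum, by Birkhoff--von Neumann;
\item hence an optimal dual $(p,q)$ exists, and complementary slackness gives $p_i+q_i=a_{i,i}$;
\item no constraint is needed off $E$, since any $-\infty$ term makes $T+a_{2,1}=-\infty$.
\end{itemize}
Your route is uniform and conceptual, and it shows exactly where finiteness is used. The paper's route is elementary and self-contained.

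\textbf{The fallback.} As written, the bookkeeping does not work: the multigraph has $n+1$ edges, so adjoining $a_{1,1}$ cannot produce two transversals. A correct elementary version adjoins $a_{2,2}+\cdots+a_{n,n}$ instead. This makes the multigraph $2$-regular, hence a union of two transversals of $A$ (Remark~\ref{R:regularBipartiteSatisfiesHall}, iterated as in the proof of Proposition~\ref{P:second-form}). That gives $T+a_{2,1}+\sum_{i\geq2}a_{i,i}\leq 2\tdet(A)$, which is what you need.

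\textbf{The case $\tdet(A)=-\infty$.} This case cannot be ``treated separately'': there the statement is false in the strong convention. Take $u_1=x_1+x_2+x_3$, $u_2=x_1'$, $u_3=x_1$. Then $h=u_2-u_1'=-x_2'-x_3'$ (with $s=1$), and
\[
A=\begin{pmatrix}0&0&0\\ 1&-\infty&-\infty\\ 0&-\infty&-\infty\end{pmatrix},\qquad
B=\begin{pmatrix}0&0&0\\ -\infty&1&1\\ 0&-\infty&-\infty\end{pmatrix}.
\]
The system is in Ritt's first form, since $\tdet(A)=-\infty=\sum_i a_{i,i}$ and $a_{2,1}=1\geq a_{1,1}=0$. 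Yet $\tdet(B)=b_{1,2}+b_{2,3}+b_{3,1}=1$.

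The paper's proof has the same hole. The proof of Lemma~\ref{lem:ritt-permutation} cancels $\sum_{i\notin\supp(\tau)}a_{i,i}$, which is illegitimate when that sum is $-\infty$. For the maximizer $\rho=(1\,2\,3)$, the paper's third case needs $a_{1,2}+a_{2,1}\leq a_{1,1}+a_{2,2}$, but here $1>-\infty$.

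So instead of a separate argument, state the hypothesis $\tdet(A)\neq-\infty$ explicitly; it is automatic in the weak convention. With that hypothesis, your potential argument is complete.
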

\begin{proof}
	
	Suppose that $J(u_1,h,u_3,\ldots,u_n) > J(u_1,u_2,\ldots,u_n)$.
	Let $B=(b_{i,j})$ be the order matrix for $(u_1,h,u_3,\ldots,u_n).$ 
	Let  $\rho \in S_n$ be a permutation such that 
	$$J(u_1,h,u_3,\ldots,u_n) = \tdet(B) = \sum_{i=1}^n b_{i,\rho(i)}.$$
	Note that because we are only modifying the second equation we have $b_{i,j}=a_{i,j}$ for $i\neq 2$.
	Also, for $i=2$ we always have 
	\begin{equation}\label{E:division-inequality}
	b_{2,j}\leq \max \lbrace a_{2,j},a_{1,j}+a_{2,1}-a_{1,1} \rbrace,
	\end{equation}
	since we need to take $a_{2,1}-a_{1,1}$ many derivatives in the Ritt division. 
	We may suppose that there exists some $j$ such that $a_{2,j}<b_{2,j} \leq a_{1,j}+a_{2,1}-a_{1,1}$ (otherwise the proof is trivial) and that $\rho(2)=j$ with that particular $j$ (otherwise $b_{2,j}$ never appears and we don't need to worry about it).
	
	We now write the permutation $\rho \in S_n$ as a product of cycles and make arguments based on the support of the permutation. 
	We recall the support of a permutation is the set of elements in $\lbrace 1,2, \ldots, n \rbrace$ that are not fixed by $\rho$ and we denote the support of $\rho$ by $\supp(\rho)$. 
	
	Also, for a permutation $\tau \in S_n$ we will let $A_{\tau} = \sum_{i=1}^n a_{i,\tau(i)}$ denote the transversal associated to $\tau$. 
	By hypothesis we always have $A_{\tau} \leq J(u_1,\ldots,u_n)=\sum_{i=1}^n a_{i,i}$.

	In the first case we suppose $1\notin \supp(\rho)$, i.e.~$\rho(1)=1$. 
	In this case we get some cancellation:
	\begin{align*}
	\tdet(B) &= \sum_{i=1}^n b_{i,\rho(i)} \\
	&= a_{1,\rho(1)} + (a_{1,\rho(2)} + a_{2,1} - a_{1,1}) + a_{3,\rho(3)} + \cdots + a_{n,\rho(n)}\\
	&=a_{1,\rho(2)} + a_{2,1}  + a_{3,\rho(3)} + \cdots + a_{n,\rho(n)}\\
	&=a_{1,\tau(1)} + a_{2,\tau(2)}  + a_{3,\tau(3)} + \cdots + a_{n,\tau(n)} =A_{\tau} \leq \tdet(A)
	\end{align*}
	where $\tau=\rho \rho'$ where $\rho' = (1\rho(2))$ is the transposition that interchanges $\rho(2)$ and $1$. 
	
	In the second case we suppose $1 \in \supp(\rho)$ and $1$ is not in the orbit of 2 under $\rho$. 
	In this case we can apply Ritt's cycle trick. 
	Write $\rho=\tau_1\tau_2\cdots \tau_s$ where $\tau_i$ are disjoint cycles. 
	Without loss of generality we will assume $1 \in \supp(\tau_1)$. 
	Using equation \eqref{E:cyclic-sum-decomposition} we have  
	\begin{align*}
	\tdet(B)&= \sum_{i=1}^n b_{i,\rho(i)} = \sum_{j=1}^s b_{\tau_j}.
	\end{align*}
	We then combine the cycle inequality for $\tau_1$ (Lemma~\ref{lem:ritt-permutation}) and the division inequality (equation \eqref{E:division-inequality}) to get a similar cancellation of the $a_{1,1}$ terms:
	\begin{align*}
	\tdet(B) = b_{\tau_1}+\sum_{j=2}^s  b_{\tau_j} & \leq \left(\sum_{j \in \supp(\tau_1)} a_{j,j}\right) + \left(a_{1,\rho(2)} + a_{2,1} -a_{1,1} \right) + \left(\sum_{j\notin \supp(\tau_1), j\neq 2 } a_{j,\rho(j)}\right)\\
	&=\left(\sum_{j \in \supp(\tau_1), j\neq 1} a_{j,j}\right) + \left(a_{1,\rho(2)} + a_{2,1}\right) +\left( \sum_{j \notin \supp(\tau_1), j \neq 2} a_{j,\rho(j)}\right)=A_{\sigma}.
	\end{align*}
	Where $\sigma=\tau_1^{-1}\cdot 	\rho \cdot  (1\rho(2))$ so that $\sigma(1)=\rho(2)$ and $\sigma(2)=1$.

	In the third case we suppose $1 \in \supp(\rho)$ and that $1$ and $2$ are in the same orbit under $\rho$.\footnote{Ritt omitted this case in his original paper \cite{Ritt1935}.
	On page 308 equation 19, there seems to be an implicit assumption that $2$ isn't in the orbit of $1$ under the given permutation.
	}
We write $\rho = \tau_1\tau_2\cdots \tau_s$ as a cycle decomposition.
We will suppose that $1,2 \in \supp(\tau_1)$.
Then $\rho^i(1)=2$ for some $i$ and $\rho^j(2)=1$ for some $j$.  
Then
\begin{align*}
  b_{\tau_1} =&\,b_{1,\rho(1)} + b_{\rho(1),\rho^2(1)} + \cdots + b_{\rho^{i-1}(1),2} + b_{2,\rho(2)} + b_{\rho(2),\rho^2(2)} + \cdots + b_{\rho^i(2),1} \\
  \leq & \,a_{1,\rho(1)} + a_{\rho(1),\rho^2(1)} + \cdots + a_{\rho^{i-1}(1),2} + (a_{1,\rho(2)}+a_{2,1}-a_{1,1}) + a_{\rho(2),\rho^2(2)} + \cdots + a_{\rho^{j-1}(2),1} \\
  =&\, (a_{1,\rho(1)} + a_{\rho(1),\rho^2(1)} + \cdots + a_{\rho^{i-1}(1),2} + a_{2,1}) +(a_{1,\rho(2)}+ a_{\rho(2),\rho^2(2)} + \cdots + a_{\rho^{j-1}(2),1})-a_{1,1}\\
  =&\, a_{\sigma_1} + a_{\sigma_2}-a_{1,1}
\end{align*}
where $\sigma_1 = (1\rho(1)\rho^2(1)\cdots \rho^{i-1}(1)2)$ and $\sigma_2 = (\rho(2)\rho^2(2)\cdots \rho^{j-2}(2)1)$,
and where the second line follows from $b_{2,\rho(2)} \leq a_{1,\rho(2)}+a_{2,1}-a_{1,1}$.
Applying Lemma~\ref{lem:ritt-permutation} gives
\[
  a_{\sigma_1} + a_{\sigma_2}-a_{1,1}  
  \leq \, \left(\sum_{i \in \supp(\sigma_1)} a_{i,i}\right) + \left(\sum_{i \in \supp(\sigma_2)} a_{i,i}\right) - a_{1,1}.
  \]
Since $\supp \sigma_1 \cup \supp \sigma_2 = \supp \tau_1$ and $\supp \sigma_1 \cap \supp \sigma_2 = \{1\}$ (i.e., $a_{1,1}$ appears once in each sum), this last expression  is equal to $\sum_{i \in \supp(\tau_1)} a_{i,i}$. We conclude that  $b_{\tau_1} \leq  \sum_{i \in \supp(\tau_1)} a_{i,i}$, and in particular
\begin{align*}
\tdet(B)&= \sum_{j=1}^s b_{\tau_j}=b_{\tau_1}+\sum_{j=2}^s a_{\tau_j} \leq \sum_{i \in \supp(\tau_1)} a_{i,i}+\sum_{j=2}^s a_{\tau_j} = A_{\tau_1^{-1}\rho} \leq \tdet(A).
\end{align*}
\end{proof}

\begin{proposition}[Reduction From Ritt's Second Form]\label{P:ritt-second}
	Let $R=K\lbrace x_1,\ldots,x_n\rbrace$.
	Let $I = [u_1,u_2, \ldots,u_n] \subset R$, with order matrix
	$A=(a_{i,j})$, where $a_{i,j} = \ord^{\partial}_{x_j}(u_i)$.
	Suppose that the system is in Ritt's second form (Definition \ref{D:ritt-forms}). 	
	Let $h = s u_n - Q u_1$ be the Ritt remainder after $\partial$-division of $u_n$ by $u_1$ in the variable $x_1$ (so only separants are used and $s$ is in the multiplicative set generated by $s_{1,1}$ and $Q\in R[\partial]$).

	Then $J(u_1,u_2,\ldots,u_{n-1},h) \leq J(u_1,u_2,\ldots,u_{n-1},u_n)$. 
\end{proposition}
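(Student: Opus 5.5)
The plan is to copy the structure of the proof of Proposition~\ref{P:ritt-first}. The only row that changes is the last one. Let $B=(b_{i,j})$ be the order matrix of $(u_1,\ldots,u_{n-1},h)$, so $b_{i,j}=a_{i,j}$ for $i<n$. Since $a_{1,1}\neq -\infty$ and $a_{n,1}\geq a_{1,1}$ by \eqref{E:second-form-3}, the $\partial$-division differentiates $u_1$ exactly $a_{n,1}-a_{1,1}$ times. This gives the division inequality
\[
b_{n,j}\leq \max\{a_{n,j},\ a_{1,j}+a_{n,1}-a_{1,1}\}.
\]
Choose $\rho\in S_n$ with $\tdet(B)=\sum_i b_{i,\rho(i)}$ and set $j=\rho(n)$. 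If $b_{n,j}\leq a_{n,j}$, then $\tdet(B)\leq A_\rho\leq \tdet(A)$ and we are done. So we may assume
\[
\tdet(B)\leq \sum_{i<n} a_{i,\rho(i)} + a_{1,j}+a_{n,1}-a_{1,1}.
\]

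It is convenient to normalize using Lemma~\ref{L:permutations-on-transversals}. Swap columns $1$ and $n$. Then \eqref{E:second-form-1} says the new matrix $A'$ has its maximal transversal on the diagonal, so Ritt's cycle trick (Lemma~\ref{lem:ritt-permutation}) applies to $A'$. Condition \eqref{E:second-form-2} says that the minor $\widetilde A_{n,n}$ also has its maximal transversal on its diagonal, so the cycle trick applies to that minor with respect to $a_{1,1},\ldots,a_{n-1,n-1}$. I will use two tools alternately. The first is the cycle trick for $A$, with respect to the transversal $a_{1,n}+\sum_{i=2}^{n-1}a_{i,i}+a_{n,1}$. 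The second is the cycle trick for $\widetilde A_{n,n}$, with respect to its diagonal. The third ingredient is the column-maximality \eqref{E:second-form-3}, which lets me replace any $a_{i,1}$ by $a_{n,1}$.

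Then I split into cases according to the cycle structure of $\rho$, exactly as in Proposition~\ref{P:ritt-first}.
\begin{enumerate}
\item If $\rho$ fixes $n$, then $j=n$. The bound becomes $\sum_{i<n}a_{i,\rho(i)}+a_{1,n}+a_{n,1}-a_{1,1}$, where $\rho$ restricts to a permutation of $\{1,\ldots,n-1\}$. The cycle trick on $\widetilde A_{n,n}$ bounds $\sum_{i<n}a_{i,\rho(i)}$ by $\sum_{i<n}a_{i,i}$. What remains is $a_{1,n}+\sum_{i=2}^{n-1}a_{i,i}+a_{n,1}=\tdet(A)$.
\item If $n$ and $1$ lie in different cycles of $\rho$, I first apply the cycle trick to the cycle through $1$. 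That cycle avoids $n$, so it lives in $\widetilde A_{n,n}$, and the trick produces an $a_{1,1}$ that cancels the $-a_{1,1}$. The leftover edges, together with $a_{1,j}$ and $a_{n,1}$, then reassemble into a transversal of $A$: row $1$ goes to column $j$, row $n$ goes to column $1$, and the other rows keep $\rho$. This mirrors the second case of the first-form proof.
\item If $1$ and $n$ lie in the same cycle $\tau_1$, I split $\tau_1$ into two cycles $\sigma_1$ and $\sigma_2$ that share the vertex $1$. I route the edge $(n,1)$ into one of them and the edge $(1,j)$ into the other, as in the third case of Proposition~\ref{P:ritt-first}. I then bound one cycle with the cycle trick for $\widetilde A_{n,n}$ if it avoids $n$, and otherwise with the trick for $A$. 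The doubled diagonal entry $a_{1,1}$ cancels against $-a_{1,1}$.
\end{enumerate}

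The main obstacle is the bookkeeping in the last two cases. There the relevant maximal transversal of $A$ is not the diagonal, so it is not automatic that the pieces close up into a genuine transversal, or into cycles to which one of the two cycle tricks applies. When an edge $a_{i,1}$ with $i\neq n$ appears in a piece that should be a cycle of $A$ through $(n,1)$, I would first use \eqref{E:second-form-3} to bound it by $a_{n,1}$. If a configuration still does not close up, I would fall back on a bipartite-multigraph view. The multiset of edges $\{(i,\rho(i))\}_{i<n}\cup\{(1,j),(n,1)\}$ has row $1$ and column $1$ of degree two and every other vertex of degree one. I would decompose it into a perfect matching plus a single closed walk through row $1$ and column $1$, and verify that this walk is controlled by $a_{1,1}$ via the minor's cycle trick.
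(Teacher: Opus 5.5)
Your proposal is correct and is essentially the paper's proof. It uses the same division inequality and the same case split on whether $1$ and $n$ lie in a common cycle of $\rho$; the paper's extra case is $\rho(1)=1$ rather than your $\rho(n)=n$, and both are subsumed by the ``different cycles'' case. In the shared-cycle case it splits $\tau_1$ the same way, into $\sigma_1$ carrying $(n,1)$ and $\sigma_2$ carrying $(1,j)$ and avoiding $n$.

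Your worry about the pieces closing up is unfounded. Applying the cycle trick for $\widetilde A_{n,n}$ to $\sigma_2$ alone yields a single $a_{1,1}$ that cancels the $-a_{1,1}$ (there is no ``doubled'' $a_{1,1}$ here). What remains is $\sigma_1$, the diagonal entries on $\supp\sigma_2\setminus\{1\}$, and the other cycles of $\rho$, and this is already a transversal of $A$. So neither the column-maximality substitution nor the bipartite fallback is needed.
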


\begin{remark}
\label{remark:cycle-trick-how-to-use-hypothesis}  
  One difference between this and Proposition~\ref{P:ritt-first}  is that the maximal transversal of $A$ is not assumed to be along the diagonal.
  Thus Ritt's Cycle Trick (Lemma~\ref{lem:ritt-permutation}) does not apply to $a_{\tau}$ for all $\tau$.
  But the assumption that $\tdet(\widetilde{A}_{n,n}) = \sum_{i=1}^{n-1} a_{i,i}$ implies that for any $\tau$ such that $n \not \in \supp \tau$, applying Lemma~\ref{lem:ritt-permutation} to the submatrix $\widetilde{A}_{n,n}$ still gives
\[
a_{\tau} \leq \sum_{i\in \supp(\tau)} a_{i,i}.
\]
\end{remark}

\begin{proof}[Proof of Proposition~\ref{P:ritt-second}]

		Since $a_{n,1}$ is the maximum of its column, $a_{n,1}\neq-\infty$;
  indeed, otherwise the entire column would consist of $-\infty$'s, i.e.~the variable $x_1$ and it's derivatives would be absent from the equations, and the components would therefore all be infinite dimensional.

		Let $B=(b_{i,j})$ be the order matrix for the new system. 
		We have that $b_{i,j}=a_{i,j}$ for $i\neq n$ and $b_{n,j} \leq \max\lbrace a_{n,j}, a_{1,j}+a_{n,1}-a_{1,1} \rbrace$.
  If $b_{n,j} = a_{n,j}$ for all $j$, then $A = B$ and there is nothing to prove.
  Similarly, if $b_{n,j} = a_{n,j}$ for $j$ such that $b_{n,j}$ appears as part of a maximal transversal, then $\tdet(A) = \tdet(B)$ and there is nothing to prove.

  Thus we assume that for some $j$, $b_{n,j}$ appears as part of a maximal transversal, and $b_{n,j} > a_{n,j}$.
  In this case, we still have that $b_{n,j} \leq a_{1,j}+a_{n,1}-a_{1,1}$.
		Let $\rho$ be a permutation such that $\tdet(B)=\sum_{i=1}^n b_{i,\rho(i)}$ so that 
		 $$ b_{n,\rho(n)} \leq a_{1,\rho(n)}+a_{n,1}-a_{1,1}.$$
		We then have that
		\begin{align*}
		\tdet(B)=\sum_{i=1}^n b_{i,\rho(i)} & \leq \left(\sum_{i=1}^{n-1} a_{i,\rho(i)}\right) + (a_{1,\rho(n)}+a_{n,1}-a_{1,1}).
		\end{align*} 
		We break this down into cases similar to Proposition~\ref{P:ritt-first}.
		
		Suppose $\rho(1)=1$. Then $a_{1,\rho(1)}$ and $a_{1,1}$ cancel giving
		\begin{align*}
		a_{1,\rho(1)} + a_{2,\rho(2)} + \cdots + a_{n-1,\rho(n-1)} + (a_{1,\rho(n)}+a_{n,1}-a_{1,1}) &=\\
  a_{1,\rho(n)}+a_{2,\rho(2)} + \cdots + a_{n-1,\rho(n-1)} + a_{n,1} &=  A_{\tau}
		\end{align*} 
		where $\tau=\rho\sigma$ where $\sigma = (1n)$. 
		This implies $\tdet(B)\leq \tdet(A)$.

		Next, write $\rho$ as a product of disjoint cycles so that 
		$$\rho=\tau_1\tau_2\cdots\tau_s.$$
		We will suppose that $1 \in \supp(\tau_1)$ (we are now always in a case where $\rho(1)\neq 1$).
		
		First suppose  $n$ is not in the orbit of $1$ under $\rho$  (i.e.~$n\notin \supp(\tau_1))$. 
		We will suppose that $n \in \supp(\tau_s)$.
		Since $n \not \in \supp \tau_1$, by Remark~\ref{remark:cycle-trick-how-to-use-hypothesis}, $a_{\tau_1}\leq \sum_{i\in \supp(\tau_1)} a_{i,i}$. 
		This implies 
		\begin{align*}
		b_{\tau_1} + \left(\sum_{j=2}^{s-1} b_{\tau_j} \right) + b_{\tau_s} 
		=&\,a_{\tau_1} + \left(\sum_{j=2}^{s-1} a_{\tau_j}\right) + b_{\tau_s}\\
		\leq &\,a_{\tau_1} + \left(\sum_{j=2}^{s-1} a_{\tau_j}\right) + \left(b_{\tau_s}-b_{n,\rho\left(n\right)}\right)+\left(a_{1,\rho\left(n\right)}+a_{n,1}-a_{1,1}\right)\\
		\leq &\, \left(\left(\sum_{i \in \supp\left(\tau_1\right)} a_{i,i}\right)-a_{1,1}\right) + \left(\sum_{j=2}^{s-1} a_{\tau_j}\right) + \left(\left(\sum_{i \in \supp\left(\tau_s\right)} b_{i,\rho\left(i\right)}\right)-b_{n,\rho\left(n\right)}\right) + a_{1,\rho\left(n\right)} +a_{n,1} \\
  =&\,\left(\sum_{i \in \supp\left(\tau_1\right), i \neq 1} a_{i,i}\right) + \left(\sum_{j=2}^{s-1} a_{\tau_j}\right) + a_{1,\rho\left(n\right)}+
  \left(\sum_{i \in \supp\left(\tau_s\right), i \neq n} a_{i,\rho\left(i\right)}\right) + a_{n,1}\\
  =&\,\left(\sum_{i \in \supp\left(\tau_1\right), i \neq 1} a_{i,i}\right) + \left(\sum_{j=2}^{s-1} a_{\tau_j}\right) + a_{\tau_s(1n)} \\
		=&\, A_{\tau}  
		\end{align*} 
		where $\tau=\tau_2\cdots\tau_s(1n)$.
  (Note that $\tau(1) =\rho(n)$ and $\tau(n)=1$.)
		By Lemma~\ref{lem:ritt-permutation}, $A_{\tau} \leq \tdet(A)$; since $\tdet(B) \leq A_{\tau}$, we conclude that $\tdet(B) \leq \tdet(A)$. 
		
	Suppose now that both $1$ and $n$ are in the support of $\tau_1$. 
	We then have 
	 $$ \tdet(B) = b_{\tau_1} + b_{\tau_2} + \cdots + b_{\tau_s} = b_{\tau_1} + a_{\tau_2} + \cdots + a_{\tau_s},$$
	and it remains to bound $b_{\tau_1}$ using $b_{n,\rho(n)} = a_{1,\rho(n)}+a_{n,1}-a_{1,1}$. 
	We divide up $b_{\tau_1}$ knowing that there exists some $i$ where $\rho^i(1)=n$ and some $j$ such that $\rho^j(n)=1$. Then
	\begin{align*}
	b_{\tau_1} =&(b_{1,\rho(1)} + b_{\rho(1),\rho^2(1)} + \cdots + b_{\rho^{i-1}(1),n})+ b_{n,\rho(n)} + (b_{\rho(n),\rho^2(n)} + \cdots + b_{\rho^{j-1}(n),1}) \\
	=& (a_{1,\rho(1)} + a_{\rho(1),\rho^2(1)} + \cdots + a_{\rho^{i-1}(1),n}) + (a_{1,\rho(n)}+a_{n,1}-a_{1,1}) + (a_{\rho(n),\rho^2(n)} + \cdots + a_{\rho^{j-1}(n),1}) \\
	=& (a_{1,\rho(1)} + a_{\rho(1),\rho^2(1)} + \cdots + a_{\rho^{i-1}(1),n} + a_{n,1}) +(a_{1,\rho(n)}+ a_{\rho(n),\rho^2(n)} + \cdots + a_{\rho^{j-1}(n),1})-a_{1,1}.
	\end{align*}
	If we let $\sigma_1=(1\rho(1)\rho^2(1)\cdots \rho^{i-1}(1)n)$ and $\sigma_2=(1\rho(n)\rho^2(n)\cdots \rho^{j-1}(n))$, then 
	 $$ b_{\tau_1} = a_{\sigma_1} + a_{\sigma_2} - a_{1,1}.$$
	Since $n \not \in \supp \sigma_2$, by Remark~\ref{remark:cycle-trick-how-to-use-hypothesis} we get 
	$$ b_{\tau_1} \leq a_{\sigma_1} + \sum_{i \in \supp(\sigma_2), i \neq 1} a_{i,i}.$$
	This then gives 
  $$ \tdet(B) = b_{\tau_1} + a_{\tau_2} + \cdots + a_{\tau_s} \leq a_{\sigma_1} + \left(\sum_{i \in \supp(\sigma_2), i \neq 1} a_{i,i}\right) + a_{\tau_2} + \cdots + a_{\tau_s} = A_{ \sigma_1\tau_1^{-1}\rho} \leq \tdet(A),$$
  completing the proof.
	\end{proof}

\section{Existence of the First Form}

\begin{proposition}[\cite{Ritt1935}]\label{P:first-form}
  Let $A$ be an $n\times n$ with entries in $\ZZ_{\geq 0} \cup \lbrace -\infty \rbrace$.
  Suppose that for every transversal $\rho$ of $A$, $a_{1,\rho(1)}\neq -\infty$ and $a_{1,\rho(1)}$ is not the maximum of the first column. Suppose additionally that at least one other entry of the first column is not equal to $-\infty$.  
  Then $A$ can be brought to Ritt's first form (Definition \ref{D:ritt-forms}) using row swaps and column swaps which do not involve the first column.
\end{proposition}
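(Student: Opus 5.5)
The plan is to choose one maximal transversal, move the row where it meets the first column to the top, put a row with a larger first-column entry second, and then permute columns $2,\ldots,n$ to push the transversal onto the diagonal. I read the hypothesis as a statement about maximal transversals and their entry in the first column. That is, for every $\rho\in S_n$ with $\tdet(A)=\sum_i a_{i,\rho(i)}$, the entry of the transversal in column $1$, namely $a_{\rho^{-1}(1),1}$, is finite and is not the maximum of the first column. If instead it refers literally to the row-$1$ entry $a_{1,\rho(1)}$, the argument below goes through after an initial row swap bringing that row to the top; the structure is the same.

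\textbf{Step 1 (choose the pivot rows).} Fix $\rho\in S_n$ realizing $\tdet(A)=\sum_{i=1}^n a_{i,\rho(i)}$, and let $i_0=\rho^{-1}(1)$. By hypothesis $a_{i_0,1}\neq-\infty$ and $a_{i_0,1}$ is not the maximum of column $1$. So there is a row $k\neq i_0$ with $a_{k,1}>a_{i_0,1}$. The extra hypothesis that some other entry of column $1$ is finite is exactly what makes this maximum, and hence $k$, meaningful and distinct from $i_0$.

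\textbf{Step 2 (row swaps).} Apply a row permutation $\sigma$ sending row $i_0$ to position $1$ and row $k$ to position $2$, ordering the remaining rows arbitrarily. Row swaps do not change $\tdet$. By Lemma~\ref{L:permutations-on-transversals} the transversal $\rho$ becomes $\rho\sigma$, which still attains $\tdet$ and now meets column $1$ in row $1$. In the new matrix $a_{2,1}>a_{1,1}\neq-\infty$.

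\textbf{Step 3 (column swaps fixing column $1$).} The new maximal permutation $\rho'$ satisfies $\rho'(1)=1$, so it restricts to a bijection from rows $\{2,\ldots,n\}$ to columns $\{2,\ldots,n\}$. Choose $\tau$ with $\tau(1)=1$ and $\tau(i)=\rho'(i)$, and permute columns by $\tau$, i.e.\ new column $i$ is old column $\rho'(i)$. This swaps only among columns $2,\ldots,n$, so column $1$ is untouched and the inequality $a_{2,1}>a_{1,1}\neq-\infty$ survives. By Lemma~\ref{L:permutations-on-transversals} the transversal becomes $\tau^{-1}\rho'=\mathrm{id}$, so $\tdet(A)=\sum_i a_{i,i}$. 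Together with Step 2 this is precisely \eqref{E:first-form}.

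\textbf{Main obstacle.} There is no deep difficulty here; the delicate point is bookkeeping. One has to verify that the row permutation in Step 2 keeps the transversal maximal and meeting column $1$ in row $1$. One also has to check that Step 3 never moves column $1$, so that the strict inequality $a_{2,1}>a_{1,1}$ obtained in Step 2 is preserved. Both follow directly from the conventions of Lemma~\ref{L:permutations-on-transversals}.
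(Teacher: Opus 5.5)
Your argument is correct and is essentially the paper's proof. Both fix one maximal transversal, then use a row permutation together with a column permutation fixing column $1$ to put that transversal on the diagonal with a larger first-column entry in row $2$; the paper first diagonalizes using rows only and then applies the simultaneous swap of rows $2,i$ and columns $2,i$, while you first place the two pivot rows and then realign columns $2,\ldots,n$.

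Your column-$1$ reading of the hypothesis is the one the paper's proof uses, and the only one under which the statement holds. Your hedge about the literal row-$1$ reading is wrong: $\begin{pmatrix} 0 & 1\\ 5 & 0\end{pmatrix}$ satisfies that reading but cannot be brought to first form by the allowed swaps.
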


\begin{proof}  
  Suppose that $\tdet(A) = \sum_{i=1}^n a_{i,\rho(i)}$. Taking $\tau = \rho$ in Lemma \ref{L:permutations-on-transversals} (i.e., rearranging the rows of $A$) gives a matrix $B$ with $\tdet(B) = \sum_{i=1}^n b_{i,i}$. We have not rearranged any columns, so by assumption, $b_{1,1} \leq \max \lbrace b_{1,1},b_{2,1},\ldots,b_{n,1}\rbrace$, and there exists some $i$ such that $b_{1,1} \leq b_{i,1}$. Applying Lemma \ref{L:permutations-on-transversals} again with $\tau = \sigma = (2i)$ gives a matrix $C$ such that $\tdet(C) = \sum_{i=1}^n c_{i,i}$ and $c_{1,1} \leq c_{2,1}$, i.e.~$C$ is in Ritt's first form.
\end{proof}

\section{Filling Ritt's Gap: Existence Of The Second Form}
\label{S:filling-ritts-gap}

In preparing this article the authors noticed a nontrivial gap in Ritt's proof of the linear case of the Jacobi Bound Conjecture in \cite{Ritt1935}. 
The gap is in his proof that any $n\times n$ matrix with entries in $\ZZ_{\geq 0} \cup \lbrace -\infty \rbrace$ satisfying certain hypotheses can be put into Ritt's second form (Definition \ref{D:ritt-forms}).
Ritt claims at the end of his proof that this is evident; we found that in order to prove such a result we had to invoke Hall's Matching Theorem.

We will state the gap now as a proposition and then give a proof for it at the end of this section once we have set up the appropriate combinatorial machinery.

\begin{proposition}[{Gap of \cite{Ritt1935}}]\label{P:second-form}
  Let $A$ be an $n\times n$ with entries in $\ZZ_{\geq 0} \cup \lbrace -\infty \rbrace$.
  Suppose that for every transversal $\rho$ of $A$, $a_{1,\rho(1)}\neq -\infty$ and $a_{1,\rho(1)}$ is the maximum of the first column. Suppose additionally that at least one other entry of the first column is not equal to $-\infty$.
  Then $A$ can be brought to Ritt's first form (Definition \ref{D:ritt-forms}) using row swaps and column swaps which do not involve the first column or the last row. 
\end{proposition}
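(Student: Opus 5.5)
The plan is to reduce the statement to a normalization of a maximal transversal, in the same way as the proof of Proposition~\ref{P:first-form}, and then to isolate exactly what the hypotheses must supply. Throughout, I read the hypothesis as: for every maximal transversal $\rho$, the entry of $\rho$ lying in the first column is finite and equals $M:=\max_i a_{i,1}$. The proof must also respect the constraint in the statement: the row swaps may not move the last row, and the column swaps may not move the first column.

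\textbf{Step 1: put a maximal transversal on the diagonal.} Fix a maximal transversal $\rho$, and let $r$ be the row in which $\rho$ meets column $1$, so $a_{r,1}=M\neq-\infty$. By Lemma~\ref{L:permutations-on-transversals}, a row permutation $\sigma$ and a column permutation $\tau$ act on $\rho$ by $\rho\mapsto\tau^{-1}\rho\sigma$. I would choose $\sigma$ to send $r$ to position $1$ and $\tau$ fixing $1$ to make the transversal diagonal, as in the proof of Proposition~\ref{P:first-form}. To keep the last row in place, I would instead exploit the freedom in $\tau$ on columns $2,\dots,n$: put on the diagonal whichever row $\rho$ matches to each remaining column, and move row $r$ to the top with a swap involving only rows $1,\dots,n-1$. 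This is possible when $r\neq n$. When $r=n$, I would try to replace $\rho$ by another maximal transversal whose column-$1$ entry lies in a row other than $n$. By the hypothesis, that entry still equals $M$.

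\textbf{Step 2: produce the inequality $a_{2,1}\ge a_{1,1}$.} After Step 1, $a_{1,1}=M$ is the column maximum, so first form forces $a_{2,1}=M$. We therefore need a second row $i\neq 1$, other than the last row, with $a_{i,1}=M$, and we must move it to position $2$ by a row swap without destroying diagonality of the maximal transversal. Swapping rows $2$ and $i$ and then columns $2$ and $i$ keeps the transversal on the diagonal and fixes column $1$, just as in the last line of the proof of Proposition~\ref{P:first-form}. So the whole argument comes down to finding a second maximizer of column $1$ outside the last row. The hypothesis that some other entry of column $1$ is finite is where this must come from.

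\textbf{Main obstacle.} Step 2 is the crux, and I expect it to fail as literally stated. If $M$ is attained at a unique row, then no sequence of row and column swaps yields $a_{2,1}\ge a_{1,1}$ with a maximal transversal on the diagonal. The reason is that any diagonal maximal transversal must, by hypothesis, meet column $1$ at that unique maximizer, which then sits at $(1,1)$, while $a_{2,1}<M$.

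So before completing the argument I would first test the statement on the $2\times2$ matrix $A=\begin{pmatrix}1&0\\0&5\end{pmatrix}$. This matrix satisfies all the hypotheses, yet its unique maximal transversal puts the column maximum $1$ in row $1$, and every other entry of column $1$ is strictly smaller. If this check confirms the obstruction, the statement as worded is false. The intended conclusion would then be Ritt's second form, with the last row fixed, which is consistent with the label and with Proposition~\ref{P:ritt-second}. In that case I would redirect Steps 1--2 to that target and use Hall's Matching Theorem to choose the transversal, rather than force first form.
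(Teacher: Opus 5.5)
You are right that ``first form'' in the statement is a typo. The paper's own proof ends by showing that a swapped matrix $D$ is in Ritt's third form, which is equivalent to the second form; this matches the label and the section title. Your $2\times 2$ example correctly shows that first form cannot be reached when the column maximum is attained only once.

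However, with the last row pinned from the outset, your example also defeats second form. For $n=2$ no swap is allowed, and $a_{1,2}+a_{2,1}=0<6$. In the paper, the proof begins by permuting rows, possibly including the last, so that the column-$1$ entry of a maximal transversal sits at $(n,1)$; by hypothesis that entry is a column maximum. Only the later swaps avoid the first column and the last row. So your Step~1, which moves that entry into row~$1$, must be reversed for the intended target.

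The genuine gap is that, once you redirect the goal, you give no argument. ``Use Hall's Matching Theorem to choose the transversal'' is where all of the content lies, and the following is what is missing.

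\emph{Reduction.} Normalize so that $a_{n,1}$ is the column maximum and $T=a_{1,2}+a_{2,3}+\cdots+a_{n-1,n}$ is a maximal transversal of $\widetilde{A}_{n,1}$, so that $\tdet(A)=a_{n,1}+T$. You must then find $i\in\{1,\ldots,n-1\}$ such that the transversal obtained from $T$ by replacing $a_{i,i+1}$ with $a_{i,1}$ is maximal in $\widetilde{A}_{n,i+1}$. Swapping rows $1,i$ and columns $2,i+1$ then yields the third form.

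\emph{Contradiction argument.} Suppose no such $i$ exists, and pick for each $i$ a strictly larger transversal $\rho_i$ of $\widetilde{A}_{n,i+1}$. The map sending $i$ to the row where $\rho_i$ meets column $1$ has a cycle; after relabeling, it is $1\to 2\to\cdots\to k\to 1$. Sum these $k$ strict inequalities and add $a_{1,2}+\cdots+a_{k,k+1}$ to both sides. The entries $a_{1,1},\ldots,a_{k,1}$ then occur exactly once on each side and cancel; they are finite, since each lies in a strictly larger transversal.

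What remains is $kT$ on the left, and on the right the edge multiset of a $k$-regular bipartite multigraph between rows $1,\ldots,n-1$ and columns $2,\ldots,n$. By Hall's theorem, applied repeatedly, this graph decomposes into $k$ perfect matchings, i.e.\ $k$ transversals of $\widetilde{A}_{n,1}$, each at most $T$. This gives $kT<kT$.

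Without this cycle-and-regular-graph device, or some substitute for it, the proposal does not prove the intended proposition.
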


Our approach is a delicate proof by contradiction that relies on Hall's Matching Theorem.
\begin{definition}
	For a bipartite multigraph $G=(V,E)$ with $V=X \amalg Y$, a \defi{saturated matching} is a subset of edges of size $\vert X\vert$ that pairs each element of $X$ with an element of $Y$.  
\end{definition}
Another way of describing a matching is as a $1$-regular bipartite subgraph. 

In what follows, for $S$ a subset of $X$ we will let $N(S)=\{y\in Y : \exists\, x\in S \text{ with } (x,y)\in E\}$ denote the \defi{neighborhood} of $S$ in $Y$

\begin{theorem}[Hall's Matching Theorem {\cite[pg 110, Theorem 3.1.11]{West1996}}]
Let $G=(V,E)$ be a bipartite graph with partition $V=X\amalg Y$.
Then $G$ admits a saturated matching if and only if
\[
\forall\, S\subseteq X,\qquad |N(S)| \ge |S|.
\]

\end{theorem}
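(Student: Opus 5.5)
The statement is Hall's Matching Theorem, which I would prove directly by strong induction on $|X|$. \emph{Necessity} is immediate. Suppose $M$ is a saturated matching and $S\subseteq X$. The edges of $M$ send the elements of $S$ to pairwise distinct vertices of $Y$, and each of these vertices lies in $N(S)$. Hence $|N(S)|\geq |S|$. Multiple edges play no role anywhere, so I would first replace the multigraph by its underlying simple graph; neither the hypothesis nor the conclusion changes.

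For \emph{sufficiency}, assume Hall's condition $|N(S)|\geq|S|$ for all $S\subseteq X$. The base case $|X|=0$ is trivial, and $|X|=1$ follows from $|N(X)|\geq 1$. For $|X|\geq 2$ I would split according to whether some nonempty proper subset is ``tight''.

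\textbf{Case 1: no tight subset.} Suppose $|N(S)|\geq |S|+1$ for every nonempty $S\subsetneq X$. Pick any $x\in X$ and any neighbour $y$ of $x$, which exists since $|N(\{x\})|\geq 1$. Delete $x$ and $y$ to get $G'$ with parts $X'=X\setminus\{x\}$ and $Y'=Y\setminus\{y\}$. For every nonempty $S\subseteq X'$ we have $|N_{G'}(S)|\geq |N(S)|-1\geq |S|$. By induction $G'$ has a matching saturating $X'$, and adding the edge $(x,y)$ gives a saturated matching of $G$.

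\textbf{Case 2: a tight subset exists.} Suppose some nonempty $S_0\subsetneq X$ has $|N(S_0)|=|S_0|$.
\begin{itemize}
\item The induced subgraph on $S_0\amalg N(S_0)$ satisfies Hall's condition, because neighbourhoods of subsets of $S_0$ lie inside $N(S_0)$. By induction it has a matching $M_1$ saturating $S_0$, and $M_1$ uses all of $N(S_0)$.
\item Let $G''$ be the induced subgraph on $(X\setminus S_0)\amalg (Y\setminus N(S_0))$. For $T\subseteq X\setminus S_0$ I must check $|N_{G''}(T)|\geq|T|$. Apply Hall's condition in $G$ to $T\cup S_0$:
\[
|N_{G''}(T)|+|N(S_0)| = |N(T\cup S_0)| \geq |T|+|S_0|.
\]
Subtracting $|N(S_0)|=|S_0|$ gives the claim.
\item By induction $G''$ has a matching $M_2$ saturating $X\setminus S_0$. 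Since $M_1$ and $M_2$ use disjoint vertex sets, $M_1\cup M_2$ is a saturated matching of $G$.
\end{itemize}

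The main obstacle is Case 2: one has to see that removing a tight set together with its full neighbourhood preserves Hall's condition on the remainder. The key step is the identity $N(T\cup S_0)=N_{G''}(T)\amalg N(S_0)$, which is what makes the subtraction above valid. The rest of the argument is routine bookkeeping.
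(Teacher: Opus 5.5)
Your proof is correct. It is the Halmos--Vaughan induction on $|X|$: split on whether a nonempty proper tight set exists, and in the tight case use $N(T\cup S_0)=N_{G''}(T)\amalg N(S_0)$ to carry Hall's condition over to the residual graph. The paper gives no proof of this statement; it simply cites West. There, sufficiency is proved by the contrapositive using alternating paths. One takes a maximum matching $M$ that misses some $u\in X$, and lets $S\subseteq X$ and $T\subseteq Y$ be the vertices reachable from $u$ by $M$-alternating paths. Since $M$ has no augmenting path (Berge), one gets $T=N(S)$ and $|T|=|S|-1$.

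Your route is fully self-contained and needs no augmenting-path lemma. The alternating-path route is constructive: a search from $u$ either enlarges the matching or exhibits an explicit set violating Hall's condition. Your Case 2, by contrast, presupposes being able to locate a tight set.

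Your opening reduction to the underlying simple graph matters for how the paper uses the theorem. The paper states Hall's theorem for graphs but applies it, through Remark~\ref{R:regularBipartiteSatisfiesHall}, to the $k$-regular bipartite \emph{multigraph} $G$ in the proof of Proposition~\ref{P:second-form}. Your observation covers this: neighbourhoods, and hence Hall's condition, are unchanged on passing to the simple graph, and a saturating matching there lifts to one in the multigraph. That is exactly what makes the repeated matching extraction in that proof legitimate.
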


\begin{remark}
  \label{R:regularBipartiteSatisfiesHall}
Note that a regular bipartite graph satisfies the hypothesis of Hall's matching theorem and thus has a perfect matching.
Indeed, suppose that $G=(V,E)$ is a bipartite and $k$-regular with $V = X \amalg Y$. 
Let $S \subset X$. 
Then since $G$ is $k$-regular we know that there are $k \vert S \vert$ incident to $N(S)$.
If $\vert N(S) \vert < \vert S \vert$, then by the pigeonhole principle there would exist at least one edge of degree greater than $k$ which gives a contradiction.  
\end{remark}

\begin{lemma}\label{lem:exists-loop}
	Let $G$ be a directed graph on $n$ vertices with no loops. Suppose that every vertex has out-degree one. 
	Then $G$ has a directed cycle. 
\end{lemma}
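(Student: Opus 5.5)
We argue by following out-edges and applying the pigeonhole principle. Since every vertex has out-degree exactly one, the edges of $G$ define a function $f\colon V\to V$ where $f(v)$ is the head of the unique edge leaving $v$. The hypothesis that $G$ has no loops says precisely that $f(v)\neq v$ for every $v\in V$.

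Fix any starting vertex $v_0$ and set $v_{k+1}=f(v_k)$ for $k\geq 0$. This gives a walk $v_0\to v_1\to v_2\to\cdots$ in $G$ along directed edges. Consider the $n+1$ terms $v_0,v_1,\ldots,v_n$. Since $G$ has only $n$ vertices, the pigeonhole principle gives indices $0\leq i<j\leq n$ with $v_i=v_j$. Choose such a pair with $j$ minimal.

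Minimality of $j$ implies that $v_i,v_{i+1},\ldots,v_{j-1}$ are pairwise distinct. Hence
$$v_i\to v_{i+1}\to\cdots\to v_{j-1}\to v_j=v_i$$
is a closed directed walk with no repeated vertices other than its endpoints, i.e.\ a directed cycle. Its length is $j-i\geq 1$.

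The only point requiring care is ruling out a degenerate cycle of length one. If $j-i=1$, then $f(v_i)=v_i$, so the edge leaving $v_i$ would be a loop, contradicting the hypothesis. Therefore $j-i\geq 2$, and $G$ contains a genuine directed cycle on at least two vertices. This is exactly the form in which such a cycle gets used with Ritt's Cycle Trick (Lemma~\ref{lem:ritt-permutation}): its vertices, taken in order, define a cycle $(v_i\,v_{i+1}\,\cdots\,v_{j-1})\in S_n$.
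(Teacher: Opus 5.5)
Your proof is correct and follows essentially the same route as the paper's: walk along the unique out-edge from any vertex until a vertex repeats, and read off the cycle. Your pigeonhole argument on $v_0,\ldots,v_n$ with minimal $j$ makes the paper's informal ``it will eventually revisit a vertex'' precise, and your observation that the no-loop hypothesis forces the cycle to have length at least $2$ is a check the paper leaves implicit.
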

\begin{proof}
	We will create a walk on the graph. 
	Start with a vertex and then follow the unique edge pointing out. 
	It will eventually revisit a vertex it once visited or make a cycle of exactly length $n$.
	A path starting from the previously vertex defines a cycle. 
\end{proof}

In the proof where we fill the gap we make use of a ``third form'' (which is really equivalent to the second form). We introduce this now to make the presentation easier to follow. 
\begin{remark}[Ritt's ``third'' form]
	Suppose that $A$ is a matrix in Ritt's second form (in particular, $\tdet(A)= a_{n,1} + a_{1,n} + \sum_{i=2}^{n-1} a_{i,i}$). 
	Then by cyclically permuting columns $2,3,\ldots,n$ we obtain a matrix $B$ with 
	\begin{equation}\label{E:third-form-1}
		\tdet(B) = b_{n,1} + \sum_{i=2}^n b_{i,i+1}.
	\end{equation}
	Explicitly $b_{i,j} = a_{i,\rho(j)}$ where $\rho$ is the inverse of the permutation $2\mapsto 3 \mapsto \cdots \mapsto n \mapsto 2$. 
	Note that in this situation for $A$ to be in Ritt's second form we require that $\tdet(\widetilde{A}_{n,n})= \sum_{i=1}^{n-1} a_{i,i}$. 
	Under the transformation above, since the $n$th column of $A$ corresponds to the 2nd column of $B$, this is equivalent to \begin{equation}\label{E:third-form-2}
		\tdet(B_{n,2}) = b_{1,1} + \sum_{i=2}^{n-1} b_{i,i+1}.
	\end{equation}
	Hence a matrix $B$ which satisfies \eqref{E:third-form-1} and \eqref{E:third-form-2} could be said to be in \defi{Ritt's third form}, which is equivalent to a matrix being in the second form. 
	This will be useful in what follows. 
\end{remark}

One last thing before filling the gap.  
We want to record a formula (equation \eqref{E:cofactor}) for taking transversals of cofactor matrices of $A$. 
In the matrix $\widetilde{A}_{r,c}$ we are omitting the $r$th row and $c$th column. 
This means that the tropical determinant will now take the maximum over functions $\rho \colon \lbrace 1,2,\ldots,n \rbrace \setminus \lbrace r \rbrace \to \lbrace 1,2,\ldots, n \rbrace \setminus \lbrace c \rbrace$ and for such a $\rho$ a transversal now takes the form 
\begin{equation}\label{E:cofactor}
	\tdet(\widetilde{A}_{r,c})= a_{1,\rho(1)} + \cdots + a_{r-1,\rho(r-1)} + a_{r+1,\rho(r+1) } + \cdots + a_{n,\rho(n) }.
\end{equation}
We note that $c$ in the $j$-position and $r$ in the $i$-position must be omitted. 
It is helpful to note that every such function can be extended to a permutation $\widetilde{\rho}\in S_n$ satisfying $\widetilde{\rho}(r)=c$. 

We can now prove that we can put any matrix into the second form filling the gap in Ritt's proof of the linear case. 
\begin{proof}[Proof of Proposition~\ref{P:second-form}]

  Let $A = (a_{i,j})$ be the order matrix.
 Swapping rows and columns, we can always assume that $a_{n,1}$ is the maximum of its column and that $a_{2,2} + \cdots + a_{n-1,n-1} + a_{1,n}$ is a maximal transversal of $\widetilde{A}_{n,1}$.
Apply the inverse of the permutation $2 \mapsto 3 \mapsto \cdots \mapsto n-1 \mapsto n$ to obtain a matrix $B$ such that $b_{1,2} + b_{2,3} + \cdots + b_{n-1,n}$ is a maximal transversal of $\widetilde{B}_{n,1}$. 
Replacing $A$ with $B$ we can assume without loss of generality that $a_{1,2}+a_{2,3} + \cdots + a_{n-1,n}$ is a maximal transversal of $\widetilde{A}_{n,1}$. 

Let $D$ be the matrix given by swapping rows $1$ and $i$ and columns $2$ and $i+1$ of $A$. We claim that $D$ is in Ritt's third form, i.e., the transversal
\begin{equation}
\label{eq:swapped-transversal}
  d_{1,1} + d_{2,3} + \cdots + d_{i-1,i} + d_{i,i+1} + d_{i+1,i+2} +\cdots + d_{n-1,n}
\end{equation}
of $\widetilde{D}_{n,2}$ is maximal, if and only if
\begin{equation}
\label{eq:shifted-transversal}  
(a_{1,2} + a_{2,3} + \cdots + a_{i-1,i}) +a_{i,1}+ (a_{i+1,i+2} + \cdots + a_{n-1,n})  
\end{equation}
  is a maximal transversal of the submatrix $\widetilde{A}_{n,i+1}$. To prove this claim, let $C$ be the matrix given by swapping rows $1$ and $i$ of $A$, i.e.,
$$c_{rs}= \begin{cases}
	a_{is} & \text{ if } r = 1 \\
	a_{1s} & \text{ if } r = i\\
  a_{rs} & \text{ otherwise. }
  \end{cases}
  $$  
Then $D$ is the matrix given by swapping columns $2$ and $i+1$ of $C$, i.e.,
$$d_{rs}= \begin{cases}
	c_{r(i+1)} & \text{ if } s = 2 \\
	c_{r2} & \text{ if } s = i+1\\
  c_{rs} & \text{ otherwise. }
  \end{cases}
  $$  
  Combining these two formulas gives that 
\[
  d_{1,1} + d_{2,3} + \cdots + d_{i-1,i} + \boxed{d_{i,i+1}} + d_{i+1,i+2} +\cdots + d_{n-1,n}
\]
is equal to 
\[
  \underline{c_{1,1}} + c_{2,3} + \cdots + c_{i-1,i} + \boxed{c_{i,2}} +c_{i+1,i+2} + \cdots + c_{n-1,n},
  \]
  which is equal to
\[
  \underline{a_{i,1}} + a_{2,3} + \cdots + a_{i-1,i} + \boxed{a_{1,2}} +a_{i+1,i+2} + \cdots + a_{n-1,n}
  \]  
(where for clarity we have underlined and boxed the terms that change). Rearranging the order of this sum by switching $a_{i,1}$ and $a_{1,2}$ gives
\[
\left(\boxed{a_{1,2}} + a_{2,3} + \cdots + a_{i-1,i}\right) + \underline{a_{i,1}} + \left(a_{i+1,i+2} + \cdots + a_{n-1,n}\right).
\]
In particular, since our operations take Equation \eqref{eq:shifted-transversal} to Equation \eqref{eq:swapped-transversal}, Equation \eqref{eq:swapped-transversal} is a maximal transversal of $\widetilde{D}_{2,n}$ (i.e., $D$ is in Ritt's third form) if and only if Equation \eqref{eq:shifted-transversal}  is a maximal transversal of $\widetilde{A}_{n,i+1}$, as claimed.

It now suffices to prove that for some $i$, Equation \eqref{eq:shifted-transversal}  is a maximal transversal of $\widetilde{A}_{n,i+1}$. 
We proceed by contradiction. Suppose that for all $i \in \{1,\ldots,n-1\}$, Equation \eqref{eq:shifted-transversal}  is not a maximal transversal of $\widetilde{A}_{n,i+1}$.
This means that for each $i$ there exists a larger transversal, i.e.,  a bijection $\rho_i\colon \lbrace 1,2,\ldots, n-1\rbrace \to \lbrace 1,2,\ldots, n \rbrace \setminus \lbrace i \rbrace$ such that 
\begin{equation}
  \label{eq:second-form-contradiction-inequality}  
	 (a_{1,2} + a_{2,3} + \cdots + a_{i-1,i}) +a_{i,1}+ (a_{i+1,i+2} + \cdots + a_{n-1,n}) < \sum_{r=1}^{n-1} a_{r,\rho_i(r)}.
\end{equation}

  Consider the directed graph whose vertices are the integers $1,\ldots,n-1$, with an edge from $i$ to $j$ if $a_{i,1}$ appears on the left hand side of Equation \eqref{eq:second-form-contradiction-inequality} and $a_{j,1}$ appears on the right hand side of Equation \eqref{eq:second-form-contradiction-inequality}.
Since $\rho_i$ is a bijection and $1$ is in the image, there exists a unique $r$ such that $\rho_i(r)=1$; in particular, each vertex has a unique outgoing edge.
By Lemma \ref{lem:exists-loop}, this graph has a cycle $C = i_1i_2\ldots i_ki_1$. Let $\sigma \in S_n$ be the cyclic permutation given by $\sigma(i) = i$ if $i$ is not in the support of the cycle, $\sigma(i_{k}) = i_1$, and $\sigma(i_j) = i_{j+1}$ for $i < k$.

From here, we will only consider Equation \eqref{eq:second-form-contradiction-inequality} for $i$ in the support of $C$.
To ease notation, we apply the permutation $(1i_1)(2i_2)\cdots(ki_k)$ to the rows and columns of $A$, and without loss of generality can assume that $\sigma = (12\ldots k)$ (and still that $a_{n,1}$ is the maximum of its column and $a_{1,2} + a_{2,3} + \cdots + a_{n-1,n}$ is a maximal transversal of $\widetilde{A}_{n,1}$).

Now consider the sum of the inequalities from Equation \eqref{eq:second-form-contradiction-inequality} with $i = 1,\ldots,k$, and add $a_{1,2} + \cdots + a_{k,k+1}$ to each side. Then the left hand side of this sum is
$$
(a_{1,1} + \cdots + a_{k,1}) + k(a_{1,2} + a_{2,3} + \cdots + a_{n-1,n});
$$
we note that the second term is $k$ times the given maximal transversal of $\widetilde{A}_{n,1}$. 

The right hand side is more complicated.
	It is the sum 
	\begin{equation}\label{E:graph-sum-0}
		\left(\sum_{r=1}^k a_{r,r+1}\right) + \sum_{i=1}^k \sum_{r=1}^{n-1} a_{r,\rho_i(r)}.
	\end{equation} 
We will describe this sum using a bipartite graph, reduce this graph to a $k$-regular graph by cancelling terms on either side of the inequality which will result in a $k$-regular bipartite graph. 
We then use Hall's Matching Theorem to show that the left over pieces are sums of transversals which we can then bound using Ritt's cycle trick, giving a contradiction.

We start by constructing a bipartite graph $G_0$. We form $G_0$ with vertices $X=\lbrace x_1,\ldots,x_{n-1}\rbrace$ and $Y=\lbrace y_1,\ldots,y_n\rbrace$ where we add an edge $(x_i,y_j)$ to $G_0$ provided $a_{i,j}$ in the left hand side. 
In this notation we have 
\begin{equation}\label{E:graph-sum-0}
	\left(\sum_{r=1}^k a_{r,r+1}\right) + \sum_{i=1}^k \sum_{r=1}^{n-1} a_{r,\rho_i(r)} = \sum_{i=1}^{n-1}\sum_{j=1}^n c_{i,j}^0 a_{i,j}
\end{equation} 
where $c_{i,j}^0$ is the number of edges from $x_i$ to $y_j$ in $G_0$.

We can now compute the out-degree of each $x\in X$ and the in-degree of each $y \in Y$.
Since $\rho_i\colon \lbrace 1,\ldots,n-1\rbrace \to \lbrace 1,\ldots,n\rbrace \setminus \lbrace i \rbrace$ are bijections, we know that $\rho_i$ contributes $1$ to the out-degree of $x$ for each $x \in X$ and contributes one to the in-degree $y$ for each $y\in Y$ except for $y_j$. 
With only these contributions the out-degree of $x$ would be $k$ for each $x\in X$ and the in-degree of each $y\in Y$ would be $k-1$.
But we also have the additional contribution of $\sum_{r=1}^k a_{r,r+1}$ which contributes one to the out-degree of each $x_1,\ldots,x_k$ and one to the in-degree of $y_2,\ldots,y_{k+1}$. 

Hence 
$$ \deg_{\out}(x_i) = \begin{cases}
	k+1, & 1\leq i \leq k \\
	k, & k<i\leq n-1
\end{cases}, \qquad \deg_{\IN}(y_j) = k, \quad 1 \leq j \leq n.$$

Due to our choice of $\sigma$, the terms $a_{1,1},\ldots, a_{k,1}$ each appear exactly once on the right hand side of the sum (i.e. $c_{1,1}^0=c_{2,1}^0= \cdots = c_{k,1}^0=1$). 
They also appear exactly once on the left hand side of our inequality. 
Thus $a_{1,1} + \cdots + a_{k,1}$ cancels from both sides of the resulting inequality. 

In terms of graphs, this amounts to deleting the vertex $y_1$ to obtain a new graph $G$ where the $\deg_{\out}(x_i)$ is lowered by one for $1\leq i \leq k$ (while all of the other vertex degrees remain the same).
This makes $G$ a $k$-regular bipartite graph. 

We will now let $c_{i,j}$ be the number of edges from $i$ to $j$ in $G$ so that we now have 
 $$ k(a_{1,2} + a_{2,3} + \cdots + a_{n-1,n}) \leq \sum_{(i,j)\in E(G)} c_{i,j} a_{i,j}.$$

By Hall's Matching theorem $G$ admits a perfect matching $M_1$. Removing the edges in $M_1$ from $G$ gives another regular bipartite graph; this graph again has a perfect matching $M_2$. Inductively, there exist perfect matchings $M_1$, \ldots, $M_k$ such that $E(G) = \cup^k_i M_l$. 

For each matching $M_l$, we defined bijections $\tau_l\colon \{1,\ldots,n-1\} \to \{2,\ldots,n\}$ defined by $\tau_l(i)=j$ if $x_iy_j \in M_l$. Then we have 
	$$
\sum_{(i,j)\in E(G)} c_{i,j} a_{i,j}  = \sum^k_{l=1} \sum_{i=1}^{n-1} a_{i,\tau_l(i)}.
$$

Each summand $\sum_{i=1}^{n-1} a_{i,\tau_l(i)}$ is a transversal of $\widetilde{A}_{n,1}$, so by Ritt's cycle trick (Lemma \ref{lem:ritt-permutation}),
\[
  \sum_{i=1}^{n-1} a_{i,\tau_l(i)} \leq a_{1,2} + a_{2,3}+\cdots + a_{n-1,n}.
\]
Putting everything together we get
\begin{align*}
	 k(a_{1,2} + a_{2,3}+\cdots + a_{n-1,n}) &< \sum_{(i,j)\in E(G)} c_{i,j} a_{i,j} \\
	 &=\sum^k_{l=1} \sum_{i=1}^{n-1} a_{i,\tau_l(i)} \\
	 &\leq \sum^k_{l=1} \left(a_{1,2} + a_{2,3}\cdots + a_{n-1,n}\right) \\
	 &= k \left(a_{1,2} + a_{2,3}\cdots + a_{n-1,n}\right).
\end{align*}
which is a contradiction.
\end{proof}

\section{Ritt's Proof of The Linear Case}\label{S:linear-case}

We proceed by a double induction on the number of variables and the order matrix.
Let $\bar{\RR} = \RR\cup \lbrace -\infty \rbrace$. 
\begin{definition}
\label{D:ritt-ordering}
	We define \defi{Ritt's matrix ordering} in the following way.
	\begin{enumerate}
		\item For $a=(a_1,\ldots,a_n)\in \bar{\RR}^n$ we write $\widetilde{a} \in \bar{\RR}^n$ for the permutation of the entries of $a$ so that the entries are in non-decreasing order. We say $a \prec b$ if and only if $\widetilde{a}<_{\lex} \widetilde{b}$.\footnote{In \cite{Ritt1935}, Ritt called these  ``characteristic sets'', which of course disagrees with the modern definition in \ref{D:characteristic-set}.}
		\item Let $A,B\in M_n(\bar{\RR})$.
		Write matrices as a vector of column vectors $A=[a_1|\cdots|a_n], B=[b_1| \cdots | b_n]$ so that $A,B \in (\bar{\RR}^n)^n$. 
		We write $A\prec B$ if and only if $A \prec_{\lex} B$.
	\end{enumerate}
\end{definition}
Note that if you view a matrix as a vector of column vectors, then the matrix ordering Ritt defines is the same as lexicographic ordering on the columns (where the columns are understood to be ordered by $\prec$).

\begin{lemma}\label{L:minimal-prime-after-division}
	\label{lemma:nonvanishing-separants}
	Let $u_1,u_2, \ldots,u_n  \in K\lbrace x_1,\ldots,x_n\rbrace$.
	Let $P\supset [u_1,u_2, \ldots,u_n]$ be a minimal prime, corresponding to a component $\Sigma_1$.
	Let $h = s u_2 - Q u_1$ be the Ritt remainder after $\partial$-division of $u_2$ by $u_1$ in the variable $x_1$.
	Suppose that $s \notin P$ (i.e., $S$ does not vanish on $\Sigma_1$).

	Then $P \supset [u_1,h,u_3,\ldots,u_n]$ is a minimal prime.
  \end{lemma}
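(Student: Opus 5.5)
The plan is to compare the two ideals $I=[u_1,u_2,\ldots,u_n]$ and $I'=[u_1,h,u_3,\ldots,u_n]$ after inverting $s$, and to use that minimal primes not containing $s$ correspond to minimal primes of the localization. First, $I'\subset I$: since $h=su_2-Q(u_1)$ with $Q\in R[\partial]$, we have $Q(u_1)\in[u_1]$ and $su_2\in[u_2]$, so $h\in I$. Hence $I'\subset I\subset P$, and $P$ is a prime containing $I'$.

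Second, we show that $I$ and $I'$ agree after inverting $s$. Since $su_2=h+Q(u_1)\in I'$ and $I'$ is a differential ideal, $u_2\in I'R[1/s]$, and derivatives of $u_2$ are handled by the quotient rule: $\partial(u_2)=\partial(s^{-1}\cdot su_2)$ is an $R[1/s]$-combination of $su_2$, $\partial(su_2)$ and $\partial s$, so $u_2^{(k)}\in I'R[1/s]$ for all $k$ by induction (the Leibniz rule expresses $(su_2)^{(k)}=s u_2^{(k)}+\sum_{j<k}\binom{k}{j}s^{(k-j)}u_2^{(j)}$). Thus $I R[1/s]=I'R[1/s]$, and so $\sat_s(I)=\sat_s(I')$, where $\sat_s$ denotes saturation by the multiplicative set generated by $s$.

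Third, minimality: suppose $P'$ is a prime with $I'\subset P'\subset P$. Since $s\notin P$, also $s\notin P'$, so $P'R[1/s]$ is a prime of $R[1/s]$ containing $I'R[1/s]=IR[1/s]$; hence $P'=P'R[1/s]\cap R\supset I$. By minimality of $P$ over $I$, $P'=P$. If one insists on differential primes, the same argument applies verbatim, since minimal primes over a differential ideal in a Ritt algebra (characteristic zero, $\mathbb{Q}\subset K$) are automatically differential, so no distinction arises.

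The only step needing care is the second one, namely that the whole differential ideal, not just $u_2$ itself, becomes equal after localization; this is handled by the Leibniz/quotient-rule induction above. Everything else is the standard correspondence between primes of $R$ avoiding $s$ and primes of $R[1/s]$.
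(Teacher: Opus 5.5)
Your proof is correct and follows essentially the same route as the paper: invert $s$, observe that $[u_1,h,u_3,\ldots,u_n]$ and $[u_1,u_2,\ldots,u_n]$ have the same saturation, and conclude from the minimality of $P$ over $[u_1,\ldots,u_n]$. The paper localizes at $S_P=R\setminus P$ rather than at the powers of $s$. Your Leibniz-rule induction supplies a step the paper leaves implicit, namely that all derivatives $u_2^{(k)}$, and not just $u_2$, lie in the localized ideal.
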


In other words, if we perform a Ritt division and the separant $s$ does not vanish on $\Sigma_1$, then $\Sigma_1$ is still an irreducible component of the new system.  

\begin{proof}
  
	First observe that $h \in [u_1,u_2,\ldots,u_n] \subset P$ so $[u_1,h,u_3,\ldots,u_n] \subset [u_1,u_2,\ldots,u_n] \subset P$. 
	Suppose that $P$ is not a minimal prime over $[u_1,h,u_3,\ldots,u_n]$ and that there is some prime $P'$ such that $[u_1,h,u_3,\ldots,u_n] \subset P' \subsetneq P$. 
	We have that $h + Qu_1 = s u_2 \in [u_1,h,u_3,\ldots,u_n]$ and since $s \in R\setminus P=S_P$ this implies that $u_2 \in \sat_{S_P}([u_1,h,u_3,\ldots,u_n])$. In particular,  $\sat_{S_P}([u_1,h,u_3,\ldots,u_n])=\sat_{S_P}([u_1,u_2,\ldots,u_n])$; since $\sat_{S_P}(P)$ (resp.~$\sat_{S_P}(P')$) is still minimal over $\sat_{S_P}([u_1,u_2,\ldots,u_n])$ (resp.~$\sat_{S_P}([u_1,h,\ldots,u_n])$), we must have that $\sat_{S_P}(P') = \sat_{S_P}(P)$. Since $s \not \in P$, we conclude that $P' = P$. 
\end{proof}

With the added ingredient we can now prove the linear case of the Jacobi Bound Conjecture. 
This is a Corollary of our reduction process. 
\begin{corollary}\label{C:linear-case}
	The Jacobi Bound Conjecture holds in the linear case. 
\end{corollary}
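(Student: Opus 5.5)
We will prove Corollary~\ref{C:linear-case} by a double induction: an outer induction on the number of variables $n$, and for fixed $n$ an inner induction on the order matrix $A$ with respect to Ritt's matrix ordering (Definition~\ref{D:ritt-ordering}). Ritt's matrix ordering is a well-order on matrices with entries in $\ZZ_{\geq 0}\cup\lbrace-\infty\rbrace$, so the inner induction is legitimate. Fix linear $u_1,\ldots,u_n$, and let $P$ be a minimal prime of $[u_1,\ldots,u_n]$ with $\dim(\kappa(P))<\infty$; we must show $\dim \kappa(P)\leq J(u_1,\ldots,u_n)$.

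The key structural fact is that for linear systems every separant $s_{1,1}=\partial u_1/\partial x_1^{(a_{1,1})}$ lies in $K$. It is nonzero as soon as $a_{1,1}\neq-\infty$, so $s\notin P$. Consequently Lemma~\ref{lemma:nonvanishing-separants} always applies: after any $\partial$-division, $P$ remains a minimal prime of the new system, and the new system is again linear. Hence it suffices to reduce the system, without increasing $J$, to one where the bound is evident.

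We first dispose of base and degenerate cases. If the first column of $A$ (after permuting variables) has at most one entry different from $-\infty$, say only $u_1$ involves $x_1$, we eliminate. The equations $u_2,\ldots,u_n$ lie in $K\lbrace x_2,\ldots,x_n\rbrace$, and a component of $[u_2,\ldots,u_n]$ lying under $P$ is finite dimensional. Over the function field of that component, $u_1$ is a linear ODE in $x_1$ of order $a_{1,1}$. This yields $\dim\kappa(P)\leq a_{1,1}+J(u_2,\ldots,u_n)\leq J(u_1,\ldots,u_n)$ by the outer induction and the strong convention. If the whole column is $-\infty$, then $x_1$ is free and $P$ is infinite dimensional, contrary to hypothesis. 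The case $n=1$ is the classical fact that a linear ODE of order $r$ has an $r$-dimensional solution space (compare Lemma~\ref{L:one-variable-case}).

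Otherwise, permute so that a maximal transversal is chosen with $a_{1,\rho(1)}\neq-\infty$ in the first column, and split according to whether $a_{1,\rho(1)}$ is the column maximum.
\begin{itemize}
\item If it is not the maximum for every maximal transversal, Proposition~\ref{P:first-form} puts the system in Ritt's first form.
\item Otherwise, Proposition~\ref{P:second-form} puts it in Ritt's second form.
\end{itemize}
We then $\partial$-divide in $x_1$, replacing $u_2$ (respectively $u_n$) by the remainder $h$. By Propositions~\ref{P:ritt-first} and~\ref{P:ritt-second}, the Jacobi number does not increase. The first column strictly drops in Ritt's ordering, since the entry $a_{2,1}$ (respectively $a_{n,1}$) is replaced by something $<a_{1,1}$ and no other column is tested first. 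Lemma~\ref{lemma:nonvanishing-separants} keeps $P$ minimal, so the inner induction gives $\dim\kappa(P)\leq J(\text{new})\leq J(\text{old})$. If $h=0$ the system has fewer than $n$ equations, and $P$ would be infinite dimensional (for linear systems this is elementary linear algebra over $K[\partial]$), so this case does not occur.

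The main obstacle is bookkeeping: the permutations used to reach first or second form must not disturb the ordering argument, and the strict decrease must be genuinely lexicographic in Ritt's sense after the row and column swaps. We would handle this by always treating the column being reduced as the first column, and by checking that its sorted vector strictly decreases while all entries in earlier (minimal) columns are unchanged.
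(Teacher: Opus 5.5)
Your architecture matches the paper's. You use an outer induction on $n$ and an inner induction on Ritt's matrix ordering. You normalize via Propositions~\ref{P:first-form} and~\ref{P:second-form}, reduce via Propositions~\ref{P:ritt-first} and~\ref{P:ritt-second}, and keep $P$ minimal by Lemma~\ref{lemma:nonvanishing-separants}, since linear separants lie in $K^\times$.

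Your bookkeeping worry is settled by always reducing the same first column. Outside the base case every column has at least two finite entries, and the normalizing swaps never move column~$1$. So a strict drop in column~$1$ puts the new matrix below the old one, however columns $2,\ldots,n$ were permuted.

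One small correction: $\partial$-division only gives $\ord^{\partial}_{x_1}(h)\le a_{1,1}$, so nothing changes when $a_{2,1}=a_{1,1}$. For linear $u_1$, subtract one more $K$-multiple of $u_1$. This gives order $<a_{1,1}$ and preserves the bound \eqref{E:division-inequality}.

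Your base case takes a different route from the paper, and a better one. The paper uses the tower $K\subset F_1=K(\lbrace\bar x_1\rbrace)\subset\kappa(P_1)$. It then relies on $\trdeg_K F_1\le a_{1,1}$ and on minimality of $P_1'$ over $[w_2,\ldots,w_n]$. Both fail for $u_1=x_1-x_2^{(5)}$, $u_2=x_2^{(10)}$:
\begin{itemize}
\item here $\trdeg_K F_1=5>0=a_{1,1}$;
\item $P_1'$ contains $x_2^{(5)}-\bar x_1$, so it strictly contains the prime $[x_2^{(10)}]\subset F_1\lbrace x_2\rbrace$.
\end{itemize}
Eliminating $x_1$ last, as you do, avoids this. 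You should still say why ``the component under $P$'' is $P\cap K\lbrace x_2,\ldots,x_n\rbrace$ and why it is minimal. Because the coefficient of $x_1^{(a_{1,1})}$ in $u_1$ lies in $K^\times$, we have $K\lbrace x_1,\ldots,x_n\rbrace/[u_1,\ldots,u_n]\cong\bigl(K\lbrace x_2,\ldots,x_n\rbrace/[u_2,\ldots,u_n]\bigr)[x_1,\ldots,x_1^{(a_{1,1}-1)}]$. Together with the fact that $[u_2,\ldots,u_n]$ is prime, this gives both the contraction and the bound $a_{1,1}$ at once.

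The genuine gap is the case $J(u_1,\ldots,u_n)=-\infty$, where you must show that no finite-dimensional component exists. Your reduction step tacitly needs $J$ to be finite.
\begin{itemize}
\item The cycle trick (Lemma~\ref{lem:ritt-permutation}) cancels $\sum_{i\notin\supp\tau}a_{i,i}$, which is illegitimate when that sum is $-\infty$. Proposition~\ref{P:ritt-first} then fails outright. Take $u_1=x_1'$, $u_2=x_1''$, $u_3=x_1+x_2+x_3+x_4$, $u_4=x_2'+x_3'+x_4'$: every column has two finite entries and $J=-\infty$. Ordering the rows $u_3,u_2,u_1,u_4$ gives Ritt's first form. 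The division replaces $u_2$ by $-(x_2''+x_3''+x_4'')$, and $J$ becomes $4$.
\item Condition \eqref{E:second-form-2} can also be unattainable. For example, this happens if three equations involve only $x_1,x_2$ and the strict maximum of column~$1$ lies in a fourth equation.
\end{itemize}
Neither your base case nor your reduction covers such matrices; the paper's proof omits them as well.

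Dispose of this case first, using the linear algebra you already invoke for $h=0$. By Frobenius--K\"onig, some $p$ equations involve fewer than $p$ unknowns. Hence the operator matrix is singular over the skew field of fractions of $K[\partial]$, and a consistent system has positive differential dimension.

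Once $J$ is finite, every entry of a maximal transversal is finite and the cycle trick is valid. Your case split then goes through. The proofs of Propositions~\ref{P:first-form} and~\ref{P:second-form} use only one suitable maximal transversal, so your ``for every / otherwise'' quantifiers do no harm.
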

\begin{proof}
The proof is a nested induction. 
The outer induction is on $n$, the number of equations which is equal to the number of variables.
The inner induction is on the order matrices according to Ritt's ordering (Definition~\ref{D:ritt-ordering}).

The idea is that we can always do reductions to lower the system in a way that preserves the irreducible component and we can do this until we get to an order matrix that takes the form \eqref{E:reduced}. 
When we get to this stage we get to induct on the number of variables. 
\\

\noindent \emph{Base Case (Outer Induction):}
  If $n = 1$, JBC is known by the Ritt Bound \cite{Ritt1935}.
 \\
  
\noindent \emph{Inductive Step (Outer Induction):}
We prove the inductive step by induction. 
\\\

\noindent \emph{Base Case (Inner Induction):}
Suppose that some column of $A$ has exactly one non $-\infty$ entry.
Then we can apply the outer inductive hypothesis, as follows.
Swapping rows and columns, we may assume that $A$ is in the form
\begin{equation}\label{E:reduced}
	A = \begin{pmatrix}
		a_{1,1} & a_{1,2} & \cdots &a_{1,n} \\
		-\infty & a_{2,2} & \cdots & a_{2,n} \\
		\vdots & \vdots & \ddots  & \vdots \\
		-\infty & a_{n,2} & \cdots & a_{n,n}
	\end{pmatrix} 
\end{equation} 
and that $a_{1,1} \neq -\infty$.

Suppose we have a reduced system with order matrix as in \eqref{E:reduced}. 
Let $P_1\supset [u_1,\ldots,u_n]$ be a minimal prime with $\trdeg_K(\kappa(P_1))<\infty$. 
We will show how to conclude that $\trdeg_K(\kappa(P_1))\leq J(u_1,u_2,\ldots,u_n)$. 
Let $F= \kappa(P_1)$ and let $\bar{x}_1$ be the image of $x_1$ in $\kappa(F)$. 
Consider now $F_1 = K(\lbrace \bar{x}_1 \rbrace) \subset F$ the $\partial$-field $\partial$-generated by $\bar{x}_1$ over $K$. 
Let $w_2,\ldots, w_n \in F_1\lbrace x_2,\ldots,x_n\rbrace$ be given by  $w_i = u_i(\overline{x}_1,x_2,\ldots,x_n)$.
Let $P_1'$ be the kernel of the map $\xi\colon F_1\lbrace x_2,\ldots,x_n\rbrace \to \kappa(P_1)$.

We have that $\kappa(P_1)=\kappa(P_1')$ so $\trdeg_K(P_1')<\infty$.
We claim that $P_1' \supset [w_2,\ldots,w_n]$ is minimal. 
Since $u_2,\ldots,u_n$ do not involve $x_1$ or its derivatives, the order matrix of the system $w_2,\ldots,w_n$ is
$$A_1=\begin{pmatrix}
	a_{2,2} & \cdots & a_{2,n} \\
	\vdots & \ddots  & \vdots \\
	a_{n,2} & \cdots & a_{n,n}
\end{pmatrix}.$$
By induction on $n$ and we know that JBC holds for this system provided we can show that $P_1 \supset  [w_2,\ldots,w_n]$ is minimal. 

Suppose $P_1' \supset [w_2,\ldots,w_n]$ is not minimal and that there is some $Q_1'$ such that $P_1' \supsetneq Q_1' \supset [w_2,\ldots,w_n]$. 
Consider the maps 
$$\varphi\colon K\lbrace x_1,\ldots,x_n\rbrace \to F_1\lbrace x_2,\ldots,x_n\rbrace/Q_1', \quad \psi\colon F_1\lbrace x_2,\ldots,x_n\rbrace \to F_1\lbrace x_2,\ldots,x_n\rbrace/Q_1'.$$ 
$$ \sigma\colon K\lbrace x_1,\ldots,x_n\rbrace \to F_1\lbrace x_2,\ldots,x_n\rbrace, \quad \psi \sigma = \varphi.$$
Let $\ker(\varphi)=Q_1$. 
We have that $Q_1 \supset [u_1,u_2,\ldots,u_n]$.
By minimality of $P_1$ we have that $P_1 = Q_1$. 
This means that $\varphi$ is really just reduction modulo $P_1$ and hence factors as $\varphi=\xi\sigma$. 

Let $f \in P_1'\setminus Q_1'$. 
Write it as $f=\sum_{\alpha} (\bar{a}_{\alpha}/\bar{b}_{\alpha}) y^{\alpha}$ where $y=(x_2,\ldots,x_n)$, and $\alpha \in \supp(f) \subset \ZZ_{\geq 0}[\partial]^{n-1}$ where $a_{\alpha},b_{\alpha} \in K\lbrace x_1\rbrace$ and $\bar{a}_{\alpha}$ and $\bar{b}_{\alpha}$ are evaluated at $\bar{x}_1$. 
Let $b=\prod_{\alpha} b_{\alpha} \in K\lbrace x \rbrace$.
Let $g = \sum_{\alpha} c_{\alpha} y^{\alpha} \in K\lbrace x_1,\ldots,x_n\rbrace$ where $c_{\alpha}=b (a_{\alpha}/b_{\alpha}) \in K\lbrace x_1 \rbrace$.
On one hand $0\neq \sigma(b) =\varphi(b) \in F_1$ but $\varphi(gb) = \psi(\sigma(g)\sigma(b))=\psi(f/\sigma(b))=\psi(f)/\sigma(b)\neq0$ and $\psi(f)\neq 0$. 
On the other hand $\varphi(gb)=\xi( \sigma(g)\sigma(b))=\xi( f/\sigma(b))=\xi(f)/\sigma(b)=0$ ($\xi(f)=0$ since $f \in P_1'$) which is a contradiction and hence $P_1'$ is a minimal prime ideal.

Since we have shown that $P_1'$ is minimal and finite transcendence degree over $F_1$ by induction we have $\trdeg_{F_1}(\kappa(P_1'))\leq J(u_2,\ldots,u_n)$.
But $\trdeg_K(\kappa(P_1)) = \trdeg_{F_1}(\kappa(P_1))+\trdeg_K(F_1) \leq J(u_2,\ldots,u_n) + a_{1,1} = J(u_1,u_2,\ldots,u_n)$ which proves the result. 
The first equality here uses additivity of transcendence degrees in towers of fields. 
\\

\noindent \emph{Inductive Step (Inner Induction)}:
The induction here is on the order matrices with the Ritt ordering as in Definition~\ref{D:ritt-ordering} with the base case being matrices of the form \eqref{E:reduced}.
Suppose that the system has an order matrix $A$ and that the Jacobi Bound conjecture holds for any system with an order matrix $B \prec A$ in Ritt's matrix ordering (Definition~\ref{D:ritt-ordering}).

If a column has at least two non $-\infty$ entries, then by Propositions~\ref{P:first-form} and \ref{P:second-form} we can put the system into Ritt's first or second form.
Then Ritt division as in Propositions~\ref{P:ritt-first} and \ref{P:ritt-second} give a new system $v_1,\ldots,v_n$ such that $J(v_1,\ldots,v_n) \leq J(u_1,\ldots,u_n)$ and whose order matrix is lower with respect to the Ritt ordering on matrices.
Since the system $u_1,\ldots,u_n$ is linear, all of its separants are units.
In particular, by  Lemma \ref{L:minimal-prime-after-division}, $\Sigma_1$ is still an irreducible component of the new system i.e. $P_1 \supset [v_1,\ldots,v_n]$ is a minimal prime ideal.
By induction, JBC is true for the new system, so $\dim \Sigma_1 \leq J(v_1,\ldots,v_n)$.
Since $J(v_1,\ldots,v_n) \leq  J(u_1,\ldots,u_n)$, we conclude that $\dim \Sigma_1 \leq J(u_1,\ldots,u_n)$, i.e., that JBC is true for $\Sigma_1$.
\end{proof}
	
	\section{Generalities on Differential Linear Series}\label{S:linear-series}

  In the nonlinear case, when performing a Ritt division the separant hypothesis of Lemma \ref{L:minimal-prime-after-division} generally fails.
  In particular, while $\Sigma_1$ will still be a subvariety of the new system, it might no longer be an irreducible component, creating a serious obstruction to performing a clean induction.
  In Ritt's proof of the two variable case of JBC in, he introduces a degeneration that we call the ``Ritt pencil trick''.
  This pencil has a fiber that is ``good enough'' to continue inducting, as we explain below.
  Ritt's proof does not cleanly generalize to more than two variables.
  Below we prove that if one assume's the Dimension Conjecture, then an argument similar to Ritt's proof works for the general JBC.

	In analogy with classical procedures in algebraic geometry (see e.g.~\cite[III.2.8]{Eisenbud1995}, \cite[Lecture 4]{Harris1995}) we consider families of differential algebraic varieties.
	
	\begin{definition}
		Let $\AA^n_{\infty}=\Spec K\lbrace x_1,\ldots,x_n\rbrace$ and $\AA^m_{\infty} = \Spec K\lbrace y_1,\ldots,y_m\rbrace$ and write $x=(x_1,\ldots,x_n)$ and $y=(y_1,\ldots,y_m)$,
		By a \defi{differential linear series} we will mean a family of $D$-schemes $f\colon\Gamma \to \AA^m_{\infty}$ parametrized by $\AA^m_{\infty}$ that are linear combinations of $\partial$-polynomials.
		$$\begin{cases}
			v_1(x,y) = v_{1,1}(x)y_1 + \cdots + v_{1,m}(x) y_m =0,\\
			v_2(x,y) = v_{2,1}(x)y_1 + \cdots + v_{2,m}(x) y_m=0,\\
			\qquad \vdots  \\
			v_e(x,y) = v_{e,1}(x)y_1 + \cdots + v_{e,m}(x) y_m=0.
		\end{cases}$$
		where $v_{i,j} \in K\lbrace x \rbrace$ for $1\leq i \leq e$ and $1\leq j \leq m$. In other words 
		$$ \Gamma = \Spec K\lbrace x,y \rbrace /[v_1(x,y),v_2(x,y),\ldots,v_n(x,y)].$$ 
		
		By a \defi{reduced differential linear series} we mean a $D$-scheme $\Gamma_0$ of the form $\Gamma_0 = \Gamma_{\red}$ for $\Gamma$ a linear series. 
	\end{definition}
	To keep terminology simple we refer to ``differential linear series'' and ``reduced differential linear series'' simply as ``linear series'' and ``reduced linear series''.

Example~\ref{E:bad-fiber} shows that these families are not flat in general. 
 In particular deforming away a single coefficient creates a differential algebraic variety which is generically of absolute dimension 3 to a differential scheme of infinite absolute dimension.
\begin{example}\label{E:bad-fiber}
	Consider $\Gamma$ given by $\Spec K\lbrace x, y,z \rbrace/[u,v]$ which we view as a family over $\Spec K\lbrace z \rbrace$ with $u=x''+y', v=zy + x''+y'$. 
	We have $\dim(\Gamma_0)=\infty$ and if $\eta$ is the generic point of $\Spec K\lbrace z \rbrace$, then $\dim(\Gamma_{\eta})=3$. 
\end{example} 
	
	\begin{remark}
		Suppose that $K$ is differentially closed. 
		In the case $e=1$, we have 
		$$ v_1(x,y) = v_1(x)y_1+\cdots + v_m(x) y_m=0.$$
		There is a surjective map
$$\operatorname{Span}_K(\lbrace v_1,\ldots,v_n\rbrace ) \to 
\lbrace \Gamma_{b} \subset K^n \colon b \in K^n \rbrace,$$
where $\operatorname{Span}_K(\lbrace v_1,\ldots,v_n\rbrace ) \subset K\lbrace x_1,\ldots,x_n\rbrace$ and where $\operatorname{Span}$ denotes the $K$-vector space span just as in the classical setting.
	\end{remark}

	In analogy with \cite[Definition 1.1.2]{Lazarsfeld2004} we make the following definition. 
	\begin{definition}
		Let $f\colon \Gamma \to \AA^m_{\infty} = \Spec K\lbrace y_1,\ldots,y_m\rbrace$ be a linear series given  by $v_i = \sum_{j=1}^m v_{i,j}(x) y_j=0$ for $1\leq i \leq e$. 
		\begin{enumerate}
			\item The \defi{base ideal} $\frak{b}$ of the linear series is 
			$$ \mathfrak{b} = [v_{i,j} \colon 1 \leq i \leq e, 1 \leq j \leq m ] \subset K\lbrace x_1,\ldots,x_n\rbrace.$$
			\item The \defi{base locus} $\BS(\Gamma)=V(\frak{b}) \subset \AA^m_{\infty}$.
		\end{enumerate}
	\end{definition}
	Geometrically the base locus is the closed $D$-subscheme of $\AA^n_{\infty}$ that is contained in $\Gamma_{b}$ for all $b$. 
	We will adopt the standard abuse of notation and refer to both the trivial family $\BS(\Gamma)\times \AA^m_{\infty} \subset \Gamma$ and $\BS(\Gamma)\subset \AA^n_{\infty}$ as ``the base locus of $\Gamma$''.
	This will allow us to talk both about the ``base locus as subscheme of $\AA^n_{\infty}$'' and ``a component of $\Gamma$ contained in the base locus''.

	\begin{definition}
		Let $\Gamma \to \AA^m_{\infty}$ be a linear series.
		Let $\Gamma_1 \in \Irr(\Gamma)$.  
		\begin{enumerate}
			\item We say $\Gamma_1$ is \defi{fixed component} if $\Gamma_1 \subset \BS(\Gamma)\times \AA^m_{\infty}$.
			\item If $\Gamma_1$ is not fixed, then it is called a \defi{moving component}.
		\end{enumerate} 
	\end{definition}
	Let $\mu \in K$. 
	Every component of $\Gamma_{\mu}$ is contained in a component of $\Gamma$.

	\begin{definition}
		Let $\Gamma$ be a reduced linear series. 
		We will write $\Gamma = \Gamma_{\fix} \cup \Gamma_{\move}$ where  
		\begin{enumerate}
			\item $\Gamma_{\fix}$ is \defi{fixed part} of $\Gamma$ and is defined to be the union of irreducible components which are fixed, and 
			\item $\Gamma_{\move}$ is the \defi{moving part} and is defined to be the union of irreducible components of $\Gamma$ which are moving.
		\end{enumerate}		
	\end{definition}
	In terms of ideals, $I_{\Gamma_{\fix}}$ is the intersection of those minimal primes above $I_{\Gamma}$ which contain $\frak{b}$.

	\begin{definition}
		A component $\Gamma_{\mu,1} \in \Irr(\Gamma_{\mu})$ is called \defi{fixed} if there exists some fixed component $\Gamma_1\in \Irr(\Gamma)$ such that $\Gamma_{\mu,1} \subset \Gamma_1$. 
		If it is not fixed it is called \defi{moving}. 
	\end{definition}

The following theorem is one of the two theorems that ``gets us out of'' degenerate situations in our inductive proof of the Jacobi Bound Conjecture.
Our particular application will be ``extra degenerate''; not only will our separants be generically vanishing on our component $\Sigma_1$, but that component will actually be a component of the base loci.
Fortunately, this ``extra degenerate'' behavior is surprisingly nice with regard to the Jacobi Bound Conjecture --- it puts us in a situation where our Jacobi Bound has decreased and where the irreducible differential scheme we are trying to apply the Jacobi Bound Conjecture to hasn't changed at all!
	\begin{theorem}\label{T:fixed}
	Let $\Sigma \subset \AA^n_{\infty}$ be a differential scheme of $\partial$-finite type and let $\Gamma \to \AA^1_{\infty}$ be a pencil of differential subschemes of $\AA^n_{\infty}$.  
  Let $\Sigma_1=V(P)$ be an irreducible component of $\Sigma$. 	
  Suppose $\Sigma_1 \subset \BS(\Gamma) \subset \Sigma$.
  	\begin{enumerate}
  		\item Then $\Sigma_1$ is a component of $\BS(\Gamma)$.
			\item Fix $\mu \in K$. If $\Gamma_{\mu,1}$ is the component of $\Gamma_{\mu}$ containing $\Sigma_1$ and $\Gamma_{\mu,1}$ is fixed, then  $\Gamma_{\mu,1} =\Sigma_1$. 
		\end{enumerate}
	\end{theorem}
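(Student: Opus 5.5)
Both parts reduce to comparing minimal primes in $K\lbrace x_1,\ldots,x_n\rbrace$ and in $K\lbrace x,y\rbrace$. Throughout, write $I_\Sigma$ for the defining ideal of $\Sigma$ and $\mathfrak b$ for the base ideal of $\Gamma$. The pencil is given by equations $v_i = v_{i,1}(x)+v_{i,2}(x)y$ for $1\le i\le e$. The hypothesis $\Sigma_1\subset \BS(\Gamma)\subset \Sigma$ then reads $I_\Sigma\subset \sqrt{\mathfrak b}$ and $\mathfrak b\subset P$. We may also pass to reduced structures, since only minimal primes matter.

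For part (1), we must show that $P$ is minimal over $\mathfrak b$. Since $\mathfrak b\subset P$, there is a prime $P'$ with $\mathfrak b\subset P'\subset P$ that is minimal over $\mathfrak b$. Because $I_\Sigma\subset\sqrt{\mathfrak b}\subset P'$, the prime $P'$ contains $I_\Sigma$. Minimality of $P$ over $I_\Sigma$ then forces $P'=P$. So $P$ is a minimal prime of $\mathfrak b$, that is, $\Sigma_1$ is an irreducible component of $\BS(\Gamma)$. This is just the sandwich argument.

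For part (2), fix $\mu\in K$. The fiber $\Gamma_\mu$ is cut out in $K\lbrace x\rbrace$ by $I_\mu=[v_{i,1}+\mu v_{i,2}]$, and $I_\mu\subset\mathfrak b$. Let $Q_\mu\subset P$ be the minimal prime of $I_\mu$ corresponding to $\Gamma_{\mu,1}$. Fixedness gives a fixed component $\Gamma_1=V(\widetilde{Q})$ of $\Gamma$ with $\Gamma_{\mu,1}\subset \Gamma_1\subset \BS(\Gamma)\times\AA^1_\infty$. In ideal terms this says $\mathfrak b K\lbrace x,y\rbrace\subset\widetilde Q$, and $\widetilde Q+[y-\mu]\subset Q_\mu K\lbrace x,y\rbrace+[y-\mu]$. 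Restricting to the slice $y=\mu$, I would deduce $\mathfrak b\subset Q_\mu$; concretely, $\mathfrak b\subset \widetilde Q\cap K\lbrace x\rbrace\subset Q_\mu$. Then $\Gamma_{\mu,1}\subset\BS(\Gamma)$, and $\Gamma_{\mu,1}$ is irreducible and contains $\Sigma_1$.

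Combining this with part (1): $\mathfrak b\subset Q_\mu\subset P$, and $P$ is minimal over $\mathfrak b$. This does not immediately give $Q_\mu = P$, since $Q_\mu$ sits between $\mathfrak b$ and $P$, and minimality of $P$ alone does not force equality. The extra input is this. Since $Q_\mu\supset\mathfrak b$, we have $Q_\mu\supset \sqrt{\mathfrak b}\supset I_\Sigma$, so $V(Q_\mu)\subset\Sigma$ is irreducible and contains the component $\Sigma_1$. Because $\Sigma_1$ is a maximal irreducible closed subset of $\Sigma$, we get $V(Q_\mu)=\Sigma_1$, that is, $Q_\mu=P$ and $\Gamma_{\mu,1}=\Sigma_1$.

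The only delicate step is the translation of ``$\Gamma_{\mu,1}\subset\Gamma_1$ with $\Gamma_1$ fixed'' into the ideal statement $\mathfrak b\subset Q_\mu$. The worry is that pulling back along the section $y=\mu$ and intersecting with $K\lbrace x\rbrace$ behaves correctly. I would handle this by observing that $\mathfrak b$ is generated by polynomials in $x$ alone. These lie in $\widetilde Q$, and $\widetilde{Q}$ is contained in the ideal of $\Gamma_{\mu,1}\times\{\mu\}$, whose contraction to $K\lbrace x\rbrace$ is $Q_\mu$. No flatness or differential closedness is needed.
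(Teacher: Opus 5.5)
Your proof is correct and follows the paper's argument. Part (1) is the same sandwich via minimality of $P$ over $I_\Sigma$, and part (2) likewise reduces to $\mathfrak b\subset Q_\mu\subset P$, with your contraction along $y=\mu$ justifying $\mathfrak b\subset Q_\mu$, a step the paper leaves implicit. One correction: your remark that minimality of $P$ over $\mathfrak b$ does not force $Q_\mu=P$ is false. Any prime lying between $\mathfrak b$ and a minimal prime $P$ of $\mathfrak b$ equals $P$ by definition, which is exactly how the paper finishes, so your detour through $I_\Sigma$ is valid but unnecessary.
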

	\begin{proof}
		Let $\Sigma$ be defined by the ideal $[u_1,\ldots,u_e] \subset K\lbrace x_1,\ldots,x_n\rbrace$. 
		Let $P_1$ be a minimal prime over $[u_1,\ldots,u_e]$ defining $\Sigma_1$.
		Let $\Gamma$ be defined by the ideal $[v_1,\ldots,v_e] \subset K\lbrace x_1,\ldots,x_n,y\rbrace$.
		\begin{enumerate}
			\item []
			\item 	Let $\frak{b}$ be the base ideal of the pencil. We have that $[u_1,\ldots,u_e] \subset \frak{b}$ by hypothesis so that our original differential algebraic variety contains the base locus. 
			We also know that the base locus contains a component of our original system: we have that $[u_1,\ldots,u_n] \subset \frak{b} \subset P$. 
			If $P$ were not minimal over $\frak{b}$ there would exist some prime $\widetilde{P}$ with $P \supsetneq \widetilde{P} \supset \frak{b}.$ 
			This would imply the following containments  
			$$[u_1,\ldots,u_e] \subset \frak{b} \subset \widetilde{P} \subsetneq P.$$
			But this would imply that $P$ was not minimal over $[u_1,\ldots,u_e]$. 
			This is a contradiction and hence $\Sigma_1$ must be a component $\BS(\Gamma)$. 
			\item 
			Let $\widetilde{\Gamma}_1=\Gamma_{\mu,1}$ be a component containing $\Sigma_1$.
			Let $\widetilde{P}$ be such that $\widetilde{\Gamma}_1=V(\widetilde{P})$. 
			Let $\Sigma_1 =V_{\partial}(P)$. 
			This implies that $\widetilde{P}\subset P$ and that $\widetilde{P}$ is minimal over $\frak{b}$. 
			We know that $[v_1(x,\mu),v_2(x,\mu),\ldots,v_e(x,\mu)] \subset \frak{b} \subset \widetilde{P} \subset P$ (the first inclusion follows from the fact that every fiber contains the base locus --- in fact, that is the definition of the base locus). Since $P$ is minimal over $\frak{b} \cap K\lbrace x_1,\ldots,x_n\rbrace$ by the previous part we must have that $\widetilde{P}=P$. 
		\end{enumerate}
	\end{proof}
	
\section{Ritt's Pencil Trick}
\label{S:ritt-pencil}

\begin{definition}\label{D:degenerate}
	Let $\Sigma = V([u_1,\ldots,u_n])$ and let $\Sigma_1$ be an irreducible component with generic point $\eta_1$. 
	Suppose that we have ordered $u_1,\ldots,u_n$ and $x_1,\ldots,x_n$ so that the order matrix is in Ritt's first form or second form. 
	Let $\ell$ be $x_1^{(r)}$, where $r =\ord^{\partial}_{x_1}(u_1)$. 
	We say the \defi{situation is degenerate} at $\Sigma_1$ if the separant vanishes generically on $\Sigma_1$. 
	In equations:
	\begin{equation}
  \label{eq:ritt-pencil-s-t}
		s_1=\frac{\partial u_1}{\partial \ell}(\eta_1)=0.
	\end{equation} 
\end{definition}
Note that if $\Sigma_1$ is associated to the prime ideal $P_1$ the separant generically vanishing is the same as the containment $s_1 \in P_1$. 

\begin{definition}\label{D:ritt-pencil}
	Let $\Sigma=V([u_1,\ldots,u_n])$. Let $\Sigma_1 \in \Irr(\Sigma)$ with generic point $\eta_1$. 
	Suppose we are in a degenerate situation for $u_1$ at $\eta_1$.  
  The \defi{Ritt pencil} is the family $\Gamma  \to \AA^1_{\infty} = \Spec K\lbrace y \rbrace $ of $D$-schemes defined by 
	$$\Gamma = \Spec(K\lbrace x_1,\ldots,x_n, y \rbrace/ [t_1 + y s_1, u_2,\ldots,u_n ]),$$
	where $s_1 = \partial u_1/\partial \ell$ where $\ell$ is the leader of $u_1$ in the variable $x_1$ and $t_1$ is defined by the equation 
	\begin{equation}\label{E:t1-definition}
		d\cdot u_1 = t_1 + \ell s_1
  \end{equation}
  where $d = \deg_{\ell} u_1 \in \mathbb{Z}_{\geq 0}$ is the degree of $u_1$ in its leader $\ell$.
\end{definition}

\begin{remark}
	In the Ritt pencil, both $t_1$ and $s_1$ are lower than $u_1$ in the variable $x_1$, by construction. 
\end{remark}

The following Proposition explains that the Ritt pencil satisfies the hypotheses of Theorem~\ref{T:fixed}.
\begin{proposition}\label{P:basic-containments}
	Let $\Sigma=V([u_1,\ldots,u_n])$. Let $\Sigma_1 \in \Irr(\Sigma)$ with generic point $\eta_1$. 
	Suppose we are in a degenerate situation for $u_1$ at $\eta_1$ and let $\Gamma$ be the corresponding Ritt pencil.
	Then 
	$$\Sigma_1 \subset \BS(\Gamma) \subset \Sigma.$$
\end{proposition}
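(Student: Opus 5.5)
The plan is to translate both containments into statements about differential ideals in $K\lbrace x_1,\ldots,x_n\rbrace$ and check them directly. For the Ritt pencil $\Gamma$, the equations are $t_1 + y s_1 = 0$ and $u_2=0,\ldots,u_n=0$, viewed as a linear series in the single parameter $y$. To put the $u_i$ with $i\geq 2$ in the linear-series format, we write the constant term as the coefficient of an auxiliary parameter specialized to $1$. Equivalently, we define the base ideal as the differential ideal generated by all coefficients in $y$:
$$\mathfrak{b} = [t_1, s_1, u_2,\ldots,u_n] \subset K\lbrace x_1,\ldots,x_n\rbrace.$$
This is the ideal whose vanishing locus lies in every fiber $\Gamma_\mu$. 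So $\BS(\Gamma)=V(\mathfrak{b})$, and the two containments $\Sigma_1\subset \BS(\Gamma)\subset\Sigma$ become the ideal containments
$$[u_1,\ldots,u_n]\subset \mathfrak{b}\subset P_1,$$
where $P_1$ is the minimal prime defining $\Sigma_1$.

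\textbf{First containment, $\mathfrak{b}\subset P_1$ (i.e.\ $\Sigma_1\subset\BS(\Gamma)$).} We check that each generator of $\mathfrak{b}$ lies in $P_1$; since $P_1$ is a differential ideal, it then contains all of $\mathfrak{b}$.
\begin{enumerate}
\item The elements $u_2,\ldots,u_n$ lie in $[u_1,\ldots,u_n]\subset P_1$.
\item $s_1\in P_1$ is exactly the degeneracy hypothesis, Definition~\ref{D:degenerate} and equation \eqref{eq:ritt-pencil-s-t}.
\item For $t_1$, rearrange \eqref{E:t1-definition} to $t_1 = d\,u_1 - \ell s_1$. Since $u_1\in P_1$ and $s_1\in P_1$, we get $t_1\in P_1$.
\end{enumerate}

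\textbf{Second containment, $[u_1,\ldots,u_n]\subset\mathfrak{b}$ (i.e.\ $\BS(\Gamma)\subset\Sigma$).} Here $u_2,\ldots,u_n$ are generators of $\mathfrak{b}$ by construction. For $u_1$, we have $d\,u_1 = t_1 + \ell s_1 \in \mathfrak{b}$. We are in characteristic zero and $d=\deg_\ell u_1\geq 1$, since $\ell$ is the leader of $u_1$ in $x_1$ and so actually occurs in $u_1$. Hence $d$ is a unit in $K$, and dividing gives $u_1\in\mathfrak{b}$.

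\textbf{Main obstacle.} The only real subtlety is the bookkeeping in defining $\t_1$ and in describing the base locus. Equation \eqref{E:t1-definition} uses the Euler-type identity $d\,u_1 - \ell\,\partial u_1/\partial\ell$, which needs to be stated carefully so that $t_1$ is a genuine differential polynomial. This is automatic, since $t_1$ is defined as $d u_1 - \ell s_1$. One should also make sure that identifying $\BS(\Gamma)$ with $V(t_1,s_1,u_2,\ldots,u_n)$ is justified: the $u_i$ for $i\geq 2$ are independent of $y$, so they lie in every fiber ideal. With that identification in place, both containments are immediate from the two ideal computations above.
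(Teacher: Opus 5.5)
Your proof is correct and is essentially the paper's argument. The paper likewise uses $s_1\in P_1$ and $t_1 = d\,u_1 - \ell s_1$ to get $\mathfrak{b}\subset P_1$, and uses $d\,u_1 = t_1 + \ell s_1$ to get $[u_1,\ldots,u_n]\subset\mathfrak{b}$; your explicit identification $\mathfrak{b}=[t_1,s_1,u_2,\ldots,u_n]$ and your remark that $d\geq 1$ is invertible in characteristic zero supply details the paper leaves implicit.
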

\begin{proof}
	By definition, $[u_1,u_2,\ldots,u_n] \subset P_1$.
	Since we are in a degenerate situation, $s_1 \in P_1$.
	By definition (see Equation \eqref{E:t1-definition}), $t_1 = d\cdot u_1-\ell s_1$. 
	This implies that $\frak{b} \subset P_1$, i.e.~$\Sigma_1 \subset \BS(\Gamma)$. 
	
	Similarly, since $d\cdot u_1 = t_1 + \ell s_1$ we have that $[u_1,\ldots,u_n] \subset \frak{b}$, i.e.~$\BS(\Gamma) \subset \Sigma$.
\end{proof}

\begin{theorem}\label{T:moving-dimension}
	Let $\Sigma=V([u_1,\ldots,u_n])$. 
	Let $\Sigma_1 \in \Irr(\Sigma)$ with generic point $\eta_1$. 
	Suppose we are in a degenerate situation for $u_1$ at $\eta_1$ and let $\Gamma = V([t_1+ys_1,u_2,\ldots,u_n])$ be the Ritt pencil.
	
	Suppose the Dimension Conjecture. 
	If $\Gamma_1$ is a moving component of $\Gamma$, then $\Gamma_1$ has differential dimension 1. 
\end{theorem}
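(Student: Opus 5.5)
The plan is to bound $\trdeg^{\partial}_K(\kappa(\Gamma_1))$ from below by the Dimension Conjecture and from above by eliminating $y$. Throughout, write $R=K\lbrace x_1,\ldots,x_n,y\rbrace$. Let $\widetilde{P}\subset R$ be the minimal prime over $[t_1+ys_1,u_2,\ldots,u_n]$ with $\Gamma_1=V(\widetilde{P})$, and let $P'=\widetilde{P}\cap K\lbrace x_1,\ldots,x_n\rbrace$.

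\emph{Lower bound.} The ideal $[t_1+ys_1,u_2,\ldots,u_n]$ is generated by $n$ differential polynomials in the $n+1$ differential indeterminates $x_1,\ldots,x_n,y$. It is not the unit ideal, since $\Gamma_1$ is nonempty. Ritt's form of the Dimension Conjecture (Conjecture~\ref{C:ritt}) then gives
\[
\trdeg^{\partial}_K(\kappa(\widetilde{P}))\geq (n+1)-n=1.
\]

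\emph{Elimination of $y$.} First I show $s_1\notin\widetilde{P}$. Suppose $s_1\in\widetilde{P}$. Since $t_1+ys_1\in\widetilde{P}$, also $t_1\in\widetilde{P}$. Together with $u_2,\ldots,u_n\in\widetilde{P}$ this gives $\frak{b}\subset\widetilde{P}$, so $\Gamma_1\subset\BS(\Gamma)\times\AA^1_{\infty}$. That says $\Gamma_1$ is fixed, contrary to hypothesis. Hence $s_1\notin\widetilde{P}$, and in $\kappa(\widetilde{P})$ we have $\bar y=-\bar t_1/\bar s_1$. So $\kappa(\widetilde{P})$ is differentially generated over $K$ by $\bar x_1,\ldots,\bar x_n$, which gives $\kappa(\widetilde{P})=\kappa(P')$ and
\[
\trdeg^{\partial}_K\kappa(\Gamma_1)=\trdeg^{\partial}_K\kappa(P').
\]
The prime $P'$ contains $[u_2,\ldots,u_n]$ and does not contain $s_1$. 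Moreover, $\widetilde{P}$ is the saturation by $s_1$ of $P'R+[t_1+ys_1]$. Using this description and minimality of $\widetilde{P}$, I will show that $P'$ is minimal over $[u_2,\ldots,u_n]$. Suppose $Q'\subsetneq P'$ is a prime containing $[u_2,\ldots,u_n]$ with $s_1\notin Q'$. Then $\sat_{s_1}(Q'R+[t_1+ys_1])$ is a prime strictly inside $\widetilde{P}$ that contains the defining ideal, which contradicts minimality.

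\emph{Upper bound.} It remains to show $\trdeg^{\partial}_K\kappa(P')\leq 1$ for a minimal prime $P'$ over $[u_2,\ldots,u_n]$. The tool is Proposition~\ref{P:dimension-bounds} and the chain in Proposition~\ref{P:behavior-of-intermediates}. These give exactly this bound when $P'$ is contained in the zero-dimensional prime $P_1$ of $\Sigma_1$. In that case the PIT form (Conjecture~\ref{C:goosed}) applied to $\bar u_1$ in $K\lbrace x\rbrace/P'$ gives $0=\trdeg^{\partial}\kappa(P_1)\geq\trdeg^{\partial}\kappa(P')-1$. Combined with the lower bound, this yields differential dimension exactly $1$.

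\emph{Main obstacle.} The hard step is guaranteeing that the component $P'$ obtained from an arbitrary moving $\Gamma_1$ sits below a differential-dimension-zero prime. I would obtain this by specializing $y$. Pick $\mu\in K$ and a component $\Gamma_{\mu,1}$ of the fiber of $\Gamma_1$ over $y=\mu$. By PIT applied to $y-\mu$, this component has differential dimension $\trdeg^{\partial}\kappa(\Gamma_1)-1$, and its $x$-projection lies in $V(t_1+\mu s_1,u_2,\ldots,u_n)$. I would then try to show this projection contains a component where $\bar u_1$ cuts down to dimension zero, using $d\,u_1=t_1+\ell s_1$. If this specialization argument does not go through in full generality, the fallback is to prove the statement for the moving components actually used later, namely those whose closure contains $\Sigma_1\times\AA^1_\infty$. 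For these, $P'\subset P_1$ holds automatically and the argument above applies.
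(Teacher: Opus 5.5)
\textbf{Assessment.} The lower bound and the elimination of $y$ in your proposal are correct. The step you flag as the main obstacle is a genuine gap, and it cannot be closed as planned, because the statement is false for arbitrary moving components.

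\textbf{The parts that work.} Your elimination of $y$ is a cleaner substitute for the paper's route. The paper takes a characteristic set $(B_1,\ldots,B_s,A_1,\ldots,A_t)$ of $Q$ for a ranking that eliminates $y$, and does a case analysis on $(s,t)$. (The paper writes $\Sigma_1=V(P)$ and $\Gamma_1=V(Q)$, so your $\widetilde P$ is its $Q$ and your $P_1$ is its $P$.) Its case $t=0$ is your observation that $s_1\in\widetilde P$ would make $\Gamma_1$ fixed. Its last case, where $\trdeg^{\partial}_K\kappa(Q)=0$, is excluded by the same Dimension Conjecture bound you use. Your identification $\kappa(\widetilde P)=\kappa(P')$ via $\bar y=-\bar t_1/\bar s_1$ is fine. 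So is your proof that $P'$ is minimal over $[u_2,\ldots,u_n]$; the condition $s_1\notin Q'$ there is automatic, since $Q'\subset P'$.

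\textbf{The gap.} It is exactly the step the paper's proof asserts without justification when it writes $Q_1\subset P$ for $Q_1=Q\cap K\lbrace x_1,\ldots,x_n\rbrace$. That inclusion is what places $Q_1$ above a prime of differential dimension $1$ via Proposition~\ref{P:behavior-of-intermediates}. Your specialization plan cannot close the gap, because for moving components not lying over $\Sigma_1$ the conclusion fails, independently of the Dimension Conjecture.

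Take $n=3$ and $u_1=(x_1'-x_2-1)^2$, $u_2=x_2x_1'$, $u_3=x_2x_3$.
\begin{enumerate}
\item The order matrix has rows $(1,0,-\infty)$, $(1,0,-\infty)$, $(-\infty,0,0)$, so the system is in Ritt's first form.
\item $\Sigma$ has exactly two components: $V(x_2,x_1'-1)$ and $\Sigma_1=V(x_1',x_2+1,x_3)$. The latter has differential dimension $0$ and finite dimension.
\item The situation is degenerate: $s_1=2(x_1'-x_2-1)\in P=[x_1',x_2+1,x_3]$.
\item The pencil equation is $t_1+ys_1=2(x_1'-x_2-1)(y-x_2-1)$.
\item $[x_2,y-1]$ is a minimal prime of the pencil that does not contain $s_1$, so it is a moving component. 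Its coordinate ring is $K\lbrace x_1,x_3\rbrace$, so it has differential dimension $2$.
\item The other moving component, $[x_1',x_3,y-x_2-1]$, does lie over $\Sigma_1$ and has differential dimension $1$.
\end{enumerate}

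\textbf{The repair.} Abandon the specialization route and make your fallback a hypothesis. Your argument actually needs only $P'=\widetilde P\cap K\lbrace x_1,\ldots,x_n\rbrace\subset P$. This is weaker than $\Gamma_1\supset\Sigma_1\times\AA^1_\infty$, and it is what the paper tacitly uses.

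You also need $\dim^{\partial}\Sigma_1=0$, which is likewise missing from the statement but used by both you and the paper. Both hypotheses hold in the application in Theorem~\ref{T:dc-implies-jbc}, where one takes $Q\subset P\,K\lbrace x_1,\ldots,x_n,y\rbrace$ and $\Sigma_1$ has finite dimension.
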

\begin{proof}
	Let $\Gamma_1 = V(Q)$ and let $\Sigma_1=V(P)$. 
	We now show $\trdeg_K^{\partial}(\kappa(Q))=1$. To do this, we take a lexicographic ordering from a block ranking which eliminates the variables $(x_1,x_2,\ldots,x_n)$ --- so it can be any ranking in $(x_1,x_2,\ldots,x_n)$ but $y$ needs to take priority over everything in the ranking. 
	Now consider a characteristic set for $Q$, which we write as 
	$$ (B_1,B_2,\ldots,B_s,A_1,A_2,\ldots, A_t),$$
	where $B_i \in K\lbrace x_1,x_2,\ldots,x_n\rbrace$ for $1\leq i \leq s$ and $A_j \in K\lbrace x_1,x_2,\ldots,x_n,y\rbrace \setminus K\lbrace x_1,x_2,\ldots,x_n\rbrace$ for $1\leq j \leq t$. 
	Note that we are forced to have $t\leq 1$ by elimination of $y$. 
	
	By the elimination theorem for characteristic sets (Proposition~\ref{P:characteristic-elimination}) we have that $(B_1,\ldots,B_s)$ is a characteristic set for $Q_1=Q \cap K\lbrace x_1,\ldots,x_n\rbrace$. 
	We note that $[u_2,u_3,\ldots,u_n] \subset Q_1$ and that $Q_1\subset P$.
	
	We now consider a chain of primes $0=P_n \subset P_{n-1} \subset \cdots \subset P_0 $ where $P_0=P$, $P_{1}$ is minimal over $[u_2,\ldots,u_n]$ which is contained in $P_0$, and for $i>0$ we proceed inductively saying that $P_{i}$ is a minimal prime over $[u_{i+1},\ldots,u_n]$ which is contained in $P_{i-1}$ etc.
	Here is a diagram:
	$$
	\begin{tikzcd}
		P_{n-1} \arrow[r, hook] & P_{n-2} \arrow[r, hook] & \cdots \arrow[r, hook] & P_1 \arrow[r, hook] & P_0=P  \\[4pt]
		{[u_n]} \arrow[u, hook] \arrow[r, hook]
		& {[u_{n-1},u_n]} \arrow[u, hook] \arrow[r, hook]
		& \cdots \arrow[r, hook]
		& {[u_2,\ldots,u_n]} \arrow[u, hook] \arrow[r, hook]
		& {[u_1,\ldots,u_n]} \arrow[u, hook].
	\end{tikzcd}
	$$
	We are precisely in the situation of Proposition~\ref{P:behavior-of-intermediates}, and hence $\trdeg^{\partial}(\kappa(P_i))=i$.

	We now have a situation where 	
	$$P \supset Q_1 \supset P_1\supset [u_2,u_3,\ldots,u_n], \text{ and }$$
	$$[u_2,u_3,\ldots,u_n] \subset [u_1,u_2,u_3, \ldots,u_n] \subset P.$$
	Since $Q_1 \supset P_1$,  $\trdeg^{\partial}(\kappa(Q_1)) \leq \trdeg^{\partial}(\kappa(P_1)) =1$.
  Since $(B_1,\ldots,B_s)$ is a characteristic set for $Q_1$, Proposition \ref{P:characteristic-sets-and-dimension} implies that $\trdeg^{\partial}(\kappa(Q_1)) = n-s$.
  In particular, $n-s \leq 1$, i.e., $n-1 \leq s$.

	We now analyze the various cases for $t$ and $s$ for the characteristic sets and derive a contradiction using the dimension conjecture.\footnote{This is an unfortunate collision in notation. In this case $s$ is the number of $B$'s in the characteristic set and $t$ is the number of $A$'s in the characteristic set. This is not to be confused with the separant and coseparant which are $s_1$ and $t_1$.}
	\begin{itemize}
		\item If $t=0$, then $Q$ has a characteristic set $[B_1,\ldots,B_s]$ with $B_i \in K\lbrace x_1,\ldots,x_n\rbrace$. Since $y s_1+t_1 \in Q$ we have that there exists some $s_0 \in S_{(B_1,\ldots,B_s)}$ (the multiplicative set generated by the initials and separants of the $B_i$) such that 
		$$ s_0 (y s_1+t_1) \equiv 0 \mod [B_1,\ldots, B_s].$$
		This implies that $$s_0s_1 \equiv 0 \mod [B_1,\ldots,B_s], \quad s t_1 \equiv 0 \mod [B_1,\ldots,B_s]$$ by linear independence of $K\lbrace x_1,\ldots,x_n\rbrace$ and $yK\lbrace x_1,\ldots,x_n\rbrace$.
		This implies that both $s_1$ and $t_1$ are contained in $Q$ since membership is characterized by the division algorithm. 
		This proves that $Q$ is contained in the base locus and hence does not correspond to a moving component (a contradiction). 
		
		\item If $t=1$ and $s=n-1$, then we are done as this would imply $\trdeg^{\partial}_K(\kappa(Q))=1$. 
		\item If $t=1$ and $s< n-1$, then $\trdeg^{\partial}(\kappa(Q))=0$, but we can drop in $\partial$-dimension by at most one after adding a new equation by the Dimension Conjecture. So this is also a contradiction. 
	\end{itemize}
\end{proof}

The following shows that how to use moving components of the Ritt pencil to get from a degenerate situation to a lower case of the Jacobi Bound Conjecture assuming the Dimension Conjecture.
\begin{theorem}\label{T:moving}
	Suppose $K$ is a differentially closed field of characteristic zero.
	Let $\Sigma=V([u_1,\ldots,u_n]) \subset \AA^n_{\infty}$. 
	Let $\Sigma_1 \in \Irr(\Sigma)$ with generic point $\eta_1$. 
	Suppose we are in a degenerate situation for $u_1$ at $\eta_1$ and let $\Gamma = V([t_1+ys_1,u_2,\ldots,u_n]) \to \AA^1_{\infty} = \Spec K\lbrace y \rbrace$ be the Ritt pencil.
	
	If $\Gamma_1 \in \Irr(\Gamma)$ is moving, then supposing the Dimension Conjecture there exists some $\mu\in K$ with $\Sigma_1 \subset \Gamma_{1,\mu} $ such that 
	\begin{enumerate}
		\item $\dim(\Gamma_{1,\mu}) <\infty$, and
		\item $J(s_1+\mu t_1,u_2,\ldots,u_n) \leq J(u_1,\ldots,u_n)$.
	\end{enumerate}

\end{theorem}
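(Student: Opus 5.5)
The plan is to treat the two conclusions separately: (2) is a purely combinatorial comparison of order matrices and holds for every $\mu\in K$, while (1) requires choosing $\mu$ carefully and using the cylinder trick together with Theorem~\ref{T:moving-dimension}. I read the equation in (2) as the fibre equation $t_1+\mu s_1$ of the Ritt pencil, which is symmetric to the printed $s_1+\mu t_1$ for this argument.

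\emph{Item (2).} Write $s_1=\partial u_1/\partial \ell$ and $t_1=d\cdot u_1-\ell s_1$. Any derivative $x_j^{(r)}$ occurring in $s_1$ or $t_1$ already occurs in $u_1$, and in the variable $x_1$ both are strictly lower than $u_1$. Hence $\ord^{\partial}_{x_j}(t_1+\mu s_1)\le \ord^{\partial}_{x_j}(u_1)$ for every $j$, in both the weak and strong conventions. So the order matrix of $(t_1+\mu s_1,u_2,\ldots,u_n)$ is entrywise $\le A$, only the first row changing. Since $\tdet$ is monotone in each entry, (2) follows for every $\mu$.

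\emph{Item (1).} First I would check the containment. By Proposition~\ref{P:basic-containments}, $\Sigma_1\times\AA^1_\infty\subset\BS(\Gamma)\times\AA^1_\infty\subset\Gamma$. I will take $\Gamma_1$ to be a moving component containing the relevant slice of $\Sigma_1$, and then $\Gamma_{1,\mu}$ is the component of the fibre $\Gamma_1\cap\{y=\mu\}$ containing $\Sigma_1$. By Theorem~\ref{T:moving-dimension}, $\kappa(Q)$ has differential transcendence degree $1$, where $\Gamma_1=V(Q)$. Its characteristic set in the $y$-eliminating ranking is $(B_1,\ldots,B_{n-1},A_1)$ with $A_1$ involving $y$. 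Because $\Gamma_1$ is moving, $s_1\notin Q$, so $y=-t_1/s_1$ in $\kappa(Q)$.

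Next I would produce the cylinders. For each $i$ the field $K\langle x_i,y\rangle\subset\kappa(Q)$ has differential transcendence degree at most $1$, so $x_i$ and $y$ are differentially dependent. This gives a nonzero $M_i(x_i,y)\in Q\cap K\lbrace x_i,y\rbrace$. Concretely, the elimination theorem (Proposition~\ref{P:characteristic-elimination}), applied to a ranking eliminating all $x_j$ with $j\ne i$, supplies a characteristic set whose bottom element lies in $K\lbrace x_i,y\rbrace$. Each $M_i$ must genuinely involve $x_i$: otherwise $y$ would satisfy a nonzero equation over $K$, which contradicts $\Gamma_1$ dominating $\AA^1_\infty$ (that $\Gamma_1$ dominates the base is what "moving" should give, and I expect to verify this).

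Now choose $\mu$. Expand each $M_i$ as a polynomial in the $x_i^{(r)}$ with coefficients in $K\lbrace y\rbrace$, and let $c_i(y)$ be a nonzero coefficient. Since $K$ is differentially closed, there is $\mu\in K$ with $c_i(\mu)\ne0$ for all $i$ (for instance a differential transcendental-enough solution avoiding the finitely many proper closed sets). We must also keep $\Sigma_1\subset\Gamma_{1,\mu}$. This is automatic if $\Sigma_1\times\{\mu\}$ lies in $\Gamma_1$ for generic $\mu$, which holds when $\Gamma_1\supset\Sigma_1\times\AA^1_\infty$. So I would pick $\Gamma_1$ to be a component of $\Gamma$ containing the irreducible set $\Sigma_1\times\AA^1_\infty$; it is moving by hypothesis. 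Then $M_i(x_i,\mu)$ is a nonzero one-variable differential polynomial vanishing on $\Gamma_{1,\mu}$. Hence $\Gamma_{1,\mu}$ lies in a component of $V([M_1(x_1,\mu),\ldots,M_n(x_n,\mu)])$, which is a product of one-variable systems. By Lemma~\ref{L:one-variable-case} applied in each variable, each $\bar x_i$ has finite transcendence degree, so $\dim\Gamma_{1,\mu}<\infty$.

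\emph{Main obstacle.} The hard step is the compatibility of the genericity of $\mu$ with $\Sigma_1\subset\Gamma_{1,\mu}$. If $\Gamma_1$ only contains $\Sigma_1\times\{\mu_0\}$ for special $\mu_0$, the choice of $\mu$ is forced. In that case I would instead argue that the set of $\mu$ for which some $c_i(\mu)=0$ is a proper differential closed subset of the image of $\Gamma_1\cap(\Sigma_1\times\AA^1_\infty)$, using that this image is either all of $\AA^1_\infty$ or would force $\Sigma_1\times\AA^1_\infty$ into a fixed component.
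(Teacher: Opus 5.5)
Your mechanism is the paper's. Theorem~\ref{T:moving-dimension} gives $\trdeg^{\partial}_K\kappa(Q)=1$, hence nonzero cylinder polynomials $M_i\in Q\cap K\{x_i,y\}$. Differential closedness of $K$ then gives $\mu$ with all $M_i(x_i,\mu)\neq 0$, which makes every $\bar x_i$ differentially algebraic on the fibre. Your proof of (2) by entrywise comparison of the first row is correct; the paper's proof does not spell it out. If the given moving $\Gamma_1$ contains $\Sigma_1\times\AA^1_\infty$, your argument goes through.

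The genuine gap is the other case, i.e.\ making a generic $\mu$ compatible with $\Sigma_1\subset\Gamma_{1,\mu}$. You flag this yourself, and the paper's proof does not address it either. Your sentence ``pick $\Gamma_1$ containing $\Sigma_1\times\AA^1_\infty$; it is moving by hypothesis'' does not follow, because the component of $\Gamma$ through $V(PK\{x,y\})=\Sigma_1\times\AA^1_\infty$ may be fixed. If a fixed component $V(Q')$ contains it, then $\mathfrak{b}\subset Q'\cap K\{x\}\subset P$, so $Q'\cap K\{x\}=P$ by Theorem~\ref{T:fixed}(1) and $Q'=PK\{x,y\}$. Thus $\Sigma_1\times\AA^1_\infty$ is itself a component, and no moving component contains it. In that situation, for any moving $\Gamma_1$ the relevant set is $Z=\{\mu:\Sigma_1\times\{\mu\}\subset\Gamma_1\}$, not the image of $\Gamma_1\cap(\Sigma_1\times\AA^1_\infty)$. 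This $Z$ is a proper closed subset, and nothing prevents it from lying in the zero locus of your $c_i$ (e.g.\ $Z=\{\mu_0\}$); special fibres can also jump to infinite dimension (Example~\ref{E:bad-fiber}). Your dichotomy does not rescue this: failure for one moving $\Gamma_1$ does not force $\Sigma_1\times\AA^1_\infty$ to be fixed, since another moving component may contain it.

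Two smaller points. First, moving only gives $s_1\notin Q$, not dominance over $\AA^1_\infty$; dominance does hold once $Q\subset PK\{x,y\}$. Second, a component of $\Gamma_1\cap\{y=\mu\}$ need not be a component of $\Gamma_\mu$, and the latter is what Theorem~\ref{T:dc-implies-jbc} actually needs.

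The repair is to stop tying $\mu$ to a prescribed $\Gamma_1$. $\Gamma$ has finitely many components (Ritt--Raudenbush). Take each moving component $V(Q_j)$ with $Q_j\cap K\{x\}\subset P$. Your identity $\bar y=-\bar t_1/\bar s_1$ gives $\kappa(Q_j)=\kappa(Q_j\cap K\{x\})$. Since $Q_j\cap K\{x\}$ contains a minimal prime of $[u_2,\ldots,u_n]$ lying in $P$, the Dimension Conjecture (as in Proposition~\ref{P:dimension-bounds}) bounds its differential transcendence degree by $1$. So there are nonzero $M_{i,j}\in Q_j\cap K\{x_i,y\}$, and you can choose $\mu\in K$ with every $M_{i,j}(x_i,\mu)\neq 0$.

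Now let $\widetilde P\subset P$ be any minimal prime of $[t_1+\mu s_1,u_2,\ldots,u_n]$; it exists because this ideal lies in $P$ by Proposition~\ref{P:basic-containments}. Then $\varphi_\mu^{-1}(\widetilde P)$ contains a minimal prime $Q_j$ of the pencil ideal with $Q_j\cap K\{x\}\subset\widetilde P\subset P$.
\begin{itemize}
\item If $Q_j$ is moving, then $0\neq M_{i,j}(x_i,\mu)\in\widetilde P$ for each $i$; a nonzero constant is impossible since $\widetilde P$ is proper. Hence $\dim V(\widetilde P)<\infty$.
\item If $Q_j$ is fixed, Theorem~\ref{T:fixed}(1) forces $\widetilde P=P$, which is finite-dimensional by the standing hypothesis on $\Sigma_1$.
\end{itemize}
Together with your (2), this yields a finite-dimensional component of $\Gamma_\mu$ containing $\Sigma_1$, which is what the main induction uses. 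The price is that this component need not lie in the originally given $\Gamma_1$.
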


\begin{proof}
	Let $\Gamma_1=V(Q)$.
	Let $\Gamma_{1,\mu}=V(\widetilde{P})$ where $\widetilde{P}$ is the specialization of $Q$ at $\mu$.
	
	Let $Q \supset I_{\Gamma}$ be a minimal moving prime. 
	Assuming the Dimension Conjecture, Theorem~\ref{T:moving-dimension} implies  $\trdeg_K^{\partial}(\kappa(Q))=1$.
	Hence for each $i$ that there exists a non-trivial relation between $y$ and $x_i$ in $Q$. 
	More precisely there exists some $M_i(y,x_i) \in K\lbrace x_i,y\rbrace \cap Q $ which is not an element of $K\lbrace x_i \rbrace \cup K\lbrace y \rbrace$ (since it is a non-trivial $\partial$-algebraic dependence). 
	We then consider the ideal $[M_1(y,x_1),M_2(y,x_2),\ldots,M_n(y,x_n)] \subset Q $.
	By the differential algebraic version of the Nullstellensatz \cite[pg 33]{Buium1994} there exists some $\mu$ in our differentially algebraically closed field $K$ such that $M_1(\mu,x_1),M_2(\mu,x_2),\ldots,M_n(\mu,x_n)$ defines a non-trivial differential algebraic variety --- none of the equations are vanishing, none of the equations are constant. 
	We then get some $$\widetilde{P} \supset I_{\Gamma_{\mu}} = [t_1+\mu s_1, u_2, \ldots,u_n]$$
	minimal (over $I_{\Gamma_{\mu}}$) with inverse image $Q$ under the specialization map $\varphi_{\mu}\colon K\lbrace x_1,\ldots,x_n,y\rbrace \to K\lbrace x_1,\ldots,x_n \rbrace$ given by $\varphi_{\mu}(x_i) = x_i$ and $\varphi_{\mu}(y) = \mu$. Since $K\lbrace x_1,\ldots,x_n\rbrace/[M_1(\mu,x_1),\ldots,M_n(\mu,x_n)]$ has finite absolute dimension we conclude that $\widetilde{P}$ has finite absolute dimension.
	The reason this has finite absolute dimension is because $K\lbrace x_1,\ldots,x_n\rbrace/[M_1(\mu,x_1),\ldots,M_n(\mu,x_n)] \cong \bigotimes_{i=1}^n K\lbrace x_i \rbrace/[M_i(\mu,x_i)]$ and each $K\lbrace x_i \rbrace/[M_i(\mu,x_i)]$ has finite absolute dimension.
\end{proof}

\section{Dimension Conjecture Implies Jacobi Bound Conjecture}
\label{section:Dimension-Conjecture-Implies-Jacobi-Bound-Conjecture}

Recall from Definition \ref{D:ritt-ordering} Ritt's ordering on matrices.

\begin{theorem}\label{T:dc-implies-jbc}
	Assume the Dimension Conjecture (Conjecture~\ref{C:goosed}). 
	Then the Jacobi Bound Conjecture (Conjecture~\ref{C:jbc}) is true.
\end{theorem}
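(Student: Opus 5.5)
We follow the nested induction of Corollary~\ref{C:linear-case}. The outer induction is on $n$, with base case $n=1$ given by Ritt's bound (Lemma~\ref{L:one-variable-case} plus the $n=1$ Jacobi bound). The inner induction is on the order matrix $A$ of $(u_1,\ldots,u_n)$ in Ritt's matrix ordering (Definition~\ref{D:ritt-ordering}). The statement proved at each stage is JBC for every minimal prime $P\supset[u_1,\ldots,u_n]$ with $\dim\kappa(P)<\infty$ (equivalently $\trdeg^\partial\kappa(P)=0$), and we may pass to a differentially closed $K$ as in the main theorem.

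\emph{Inner base case.} Some column of $A$ has a single non-$(-\infty)$ entry, so $A$ has the shape \eqref{E:reduced}. Here the argument of Corollary~\ref{C:linear-case} goes through verbatim, since it never uses linearity: pass to $F_1=K(\langle\bar x_1\rangle)$, show $P_1'$ is minimal over $[w_2,\ldots,w_n]$, apply the outer hypothesis, and add $\trdeg_K F_1\le a_{1,1}$. The bound $\trdeg_K F_1\le a_{1,1}$ comes from $u_1(\bar x_1,\bar x_2,\ldots)=0$ being a nontrivial relation of order $a_{1,1}$ in $x_1$ over the field generated by the other coordinates. This step needs a small check: $\trdeg_K\kappa(P_1)=\trdeg_{F_1}+\trdeg_K F_1$, and the order-$a_{1,1}$ relation should bound the transcendence of $\bar x_1$ relative to the rest rather than over $K$. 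I would reorganize the tower so that it is $K\subset F_2=K\langle \bar x_2,\ldots,\bar x_n\rangle\subset\kappa(P_1)$, where $F_2$ is handled by the outer induction applied to the system $u_2,\ldots,u_n$, which does not involve $x_1$.

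\emph{Inner inductive step.} Every column has at least two finite entries. By Propositions~\ref{P:first-form} and~\ref{P:second-form}, after permutations the system is in Ritt's first or second form. Let $h$ be the Ritt $\partial$-remainder as in Proposition~\ref{P:ritt-first} or~\ref{P:ritt-second}, so the new order matrix is lower and the Jacobi number does not increase. If the separant $s_1$ of $u_1$ in $x_1$ is not in $P$, Lemma~\ref{L:minimal-prime-after-division} keeps $P$ minimal and the inner hypothesis finishes. Otherwise we are in the degenerate situation (Definition~\ref{D:degenerate}), and we form the Ritt pencil $\Gamma$. By Proposition~\ref{P:basic-containments}, $\Sigma_1\subset\BS(\Gamma)\subset\Sigma$. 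Choose a component $\Gamma_1$ of $\Gamma$ containing $\Sigma_1\times\AA^1_\infty$ and split into two cases.

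If $\Gamma_1$ is fixed, Theorem~\ref{T:fixed} gives $\Gamma_{\mu,1}=\Sigma_1$ for a generic $\mu\in K$. Then $P$ is a minimal prime of $[t_1+\mu s_1,u_2,\ldots,u_n]$. If $\Gamma_1$ is moving, Theorem~\ref{T:moving} (which uses the Dimension Conjecture through Theorem~\ref{T:moving-dimension}) produces $\mu$ and a finite-dimensional minimal prime $\widetilde P\subset P$ over $[t_1+\mu s_1,u_2,\ldots,u_n]$, with $\dim\Sigma_1\le\dim V(\widetilde P)$.

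In both cases we need the new order matrix to be strictly Ritt-lower and the Jacobi number not to increase. For the first point, $t_1$ and $s_1$ have lower order or degree in $x_1$ than $u_1$, so only row $1$ changes and it decreases in column $1$. For the second point, since $s_1,t_1$ only involve derivatives of orders $\le a_{1,j}$, we get $\ord_{x_j}(t_1+\mu s_1)\le a_{1,j}$ for all $j$, hence $J\le J(u_1,\ldots,u_n)$. Then the inner hypothesis gives $\dim\Sigma_1\le J(t_1+\mu s_1,u_2,\ldots,u_n)\le J(u_1,\ldots,u_n)$.

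\emph{Main obstacle.} The delicate part is well-foundedness and the strict decrease in Ritt's ordering. The column-$1$ order may stay the same, with only the degree in the leader dropping. So I would refine the induction to lexicographic order on (order matrix, degree of $u_1$ in its leader), checking that each step still yields a well-order. I would also check the moving case, where Theorem~\ref{T:moving} needs $\Sigma_1\subset\Gamma_{1,\mu}$ and we need $\widetilde P$ to actually be a component of the new system. That is where the specialization argument in Theorem~\ref{T:moving} must be verified carefully.
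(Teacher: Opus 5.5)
Your route is the paper's: outer induction on $n$, inner induction on Ritt's ordering, Propositions~\ref{P:ritt-first} and~\ref{P:ritt-second} when the separant survives on $\Sigma_1$, and otherwise the Ritt pencil with a fixed/moving split. The three places you mark as needing checks are genuine gaps, and the paper's write-up passes over them as well. Your proposal names them but does not close them.

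\textbf{Inner base case.} The inequality $\trdeg_K K\langle\bar x_1\rangle\le a_{1,1}$ is false. For $u_1=x_1'-x_2$, $u_2=x_2^{(5)}$ the left side is $6$, and $P_1'\ni x_2-\bar x_1'$ is not even minimal over $[w_2]$. So the ``verbatim'' transfer from Corollary~\ref{C:linear-case} fails. Your tower $K\subset F_2\subset\kappa(P_1)$ is the right repair, but it needs two facts you do not supply.
\begin{enumerate}
\item $u_1(x_1,\bar x_2,\ldots,\bar x_n)\neq 0$ in $F_2\lbrace x_1\rbrace$. Otherwise $(P_1\cap K\lbrace x_2,\ldots,x_n\rbrace)K\lbrace x\rbrace$ is a prime containing $[u_1,\ldots,u_n]$ inside $P_1$, hence equal to $P_1$, and then $\bar x_1$ is differentially transcendental.
\item Finiteness for the outer hypothesis. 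You need not show $P_1\cap K\lbrace x_2,\ldots,x_n\rbrace$ is minimal. Any minimal prime $P_2'$ of $[u_2,\ldots,u_n]$ below it has $\trdeg^{\partial}\kappa(P_2')=0$, by Conjecture~\ref{C:goosed} applied to $(K\lbrace x_2,\ldots,x_n\rbrace/P_2')\lbrace x_1\rbrace$ and $\bar u_1$. Hence $\trdeg_K F_2\le\trdeg_K\kappa(P_2')\le J(u_2,\ldots,u_n)$.
\end{enumerate}

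\textbf{Degenerate step.} Quoting Theorem~\ref{T:moving} is not enough. Its claim $\varphi_\mu^{-1}(\widetilde P)=Q$ cannot hold: $y-\mu$ lies on the left, while $Q\cap K\lbrace y\rbrace=0$ because $Q\subseteq PK\lbrace x,y\rbrace$. Also, a minimal prime of $I_\mu=[t_1+\mu s_1,u_2,\ldots,u_n]$ inside $P$ may come from a component of $\Gamma$ other than your $\Gamma_1$; your fixed case silently assumes it does not. The missing idea is a precise choice of generic $\mu$.
\begin{enumerate}
\item $\Gamma$ has finitely many components $Q_j$ (Ritt--Raudenbush).
\item For $Q_j\not\subseteq PK\lbrace x,y\rbrace$, the $\mu$ with $\varphi_\mu(Q_j)\subseteq P$ form a proper Kolchin-closed subset of $K$.
\item For moving $Q_j\subseteq PK\lbrace x,y\rbrace$, $y$ is differentially transcendental modulo $Q_j$ and $\trdeg^{\partial}\kappa(Q_j)=1$ by Theorem~\ref{T:moving-dimension}. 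This gives $M_i(y,x_i)\in Q_j$ genuinely involving $x_i$.
\end{enumerate}
Take $\mu$ outside all these sets and outside the zero loci of the $x_i$-coefficients of the $M_i$. Then every minimal prime $\widetilde P\subseteq P$ of $I_\mu$ contains $\varphi_\mu(Q_j)$ for some $Q_j\subseteq PK\lbrace x,y\rbrace$. If $Q_j$ is fixed, $\widetilde P=P$, since $P$ is minimal over $\mathfrak b$ by Theorem~\ref{T:fixed}. If $Q_j$ is moving, $\widetilde P$ contains the nonzero $M_i(\mu,x_i)$ for all $i$, so it is finite dimensional.

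\textbf{Termination.} Write $u_1=\sum_k c_k\ell^k$ with $d=\deg_\ell u_1\ge 2$. The $\ell^{d-1}$-coefficient of $t_1+\mu s_1$ is $c_{d-1}+d\mu c_d$, so for generic $\mu$ the $x_1$-order does not drop and $B\prec A$ fails, as you note. Your fix, ``degree of $u_1$,'' is not an invariant of the system, because rows are permuted to reach first or second form. A measure that works is the list of pairs $(a_{i,1},\deg_{x_1^{(a_{i,1})}}u_i)$, sorted and compared lexicographically. It is a well-order, it is unaffected by the permutations used (column $1$ stays $x_1$), and it drops whenever one row's pair drops.

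There is also a case you did not flag. Suppose the $x_1$-orders of dividend and divisor coincide: $a_{2,1}=a_{1,1}$ in first form, or $a_{n,1}=a_{1,1}$ in second form. Definition~\ref{D:ritt-forms} allows this, and it is unavoidable when all finite entries of column $1$ are equal. Then the $\partial$-remainder is the dividend itself, so your claim that ``the new order matrix is lower'' is false. Progress here needs full Ritt division, hence a nondegeneracy condition on an initial, or a pencil built from the initial when it vanishes. Moreover, the $\ell$-degrees may force you to divide the row lying on the maximal transversal. Propositions~\ref{P:ritt-first} and~\ref{P:ritt-second} do not cover that direction, and it can raise the bound. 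Example: $u_1=(x_1')^2+x_1+x_2+x_3$, $u_2=x_1'+x_2^{(5)}+x_3^{(5)}$, $u_3=x_1+x_2+x_3$ is in first form with $a_{2,1}=a_{1,1}$ and $J=6$. The remainder of $u_1$ by $u_2$ is $(x_2^{(5)}+x_3^{(5)})^2+x_1+x_2+x_3$, and the new bound is $10$. Until ties are handled, the inner induction is not shown to terminate.
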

\begin{proof}
	Consider the system of equations $[u_1,u_2,\ldots,u_n] \subset K\lbrace x_1,x_2,\ldots,x_n \rbrace$. Let $P \supset [u_1,u_2,\ldots,u_n]$ be a minimal prime ideal such that $\trdeg(K\lbrace x_1,x_2,\ldots,x_n\rbrace/P)<\infty$. 
	
	The proof is by induction on $n$. Here $n$ is the number of variables, which is also equal to the number of equations.
	\\
	
 \noindent \emph{Base Case (Outer Induction)}: 
		The Jacobi Bound Conjecture is known in the case $n=1$ from the Ritt Bound (\cite[pg 135]{Ritt1950}).
		(The Jacobi Bound Conjecture in the case $n=2$ is also a theorem of Ritt \cite{Ritt1935}).
		\\
		
\noindent \emph{Inductive Step (Outer Induction)}: Assume the Jacobi Bound Conjecture for $m$ equations and $m$ variables with $m<n$. 
		The inductive step is another proof by induction, this time on the order matrices $A$. 
		\\
		
\noindent \emph{Base Case (Inner Induction)}: 
				These are order matrices $A$ of the form 
			$$ A = \begin{pmatrix}
				a_{1,1} & a_{1,2} & \cdots & a_{1,n} \\
				-\infty & a_{2,2} & \cdots & a_{2,n} \\
				-\infty & a_{3,2} & \cdots & a_{3,n} \\
				\vdots & \vdots & \ddots & \vdots \\
				-\infty & a_{n,2} & \cdots & a_{n,n}
			\end{pmatrix}. 
			$$
  Here we know that $\tdet(A) = a_{1,1} + \tdet(\widetilde{A}_{1,1})$, where $\widetilde{A}_{1,1}$ is the cofactor matrix for the entry $(1,1)$. The proof now reduces to the Jacobi Bound Conjecture in $(n-1)$-variables which we assume to be true by the outer inductive hypothesis.
  The proof is verbatim the same as the \emph{Base Case (Inner Induction)} in the proof of Corollary~\ref{C:linear-case}, where we proved the linear case. 
\\

\noindent \emph{Inductive Step (Inner Induction)}:
			We will show for all systems $[u_1,\ldots,u_n]$ with order matrix $A$ and minimal prime ideals $P \supset [u_1,\ldots,u_n]$ with $\trdeg_K^{\partial}(\kappa(P))<\infty$, there exists a system $[v_1,\ldots,v_n]$ with order matrix $B\prec A$ and minimal prime $\widetilde{P}\supset [v_1,\ldots,v_n]$ with $\trdeg_K^{\partial}(\kappa(\widetilde{P}))<\infty$ such that 
			$$P \supset \widetilde{P} \supset [v_1,\ldots,v_n], \quad J(v_1,\ldots,v_n) \leq J(u_1,\ldots,u_n).$$
Once we show this,  then by the inductive hypothesis the Jacobi Bound Conjecture will hold for the system $[v_1,\ldots,v_n]$ with the prime ideal $\widetilde{P}$, and observing that
			$$\dim V(P)\leq \dim V(\widetilde{P}) \leq J(v_1,\ldots,v_n) \leq J(u_1,\ldots,u_n)$$
  proves that JBC also holds for $P$
\\  
			
\noindent \emph{Proof of Main Claim of Inner Induction:}	We now show the main claim of the inner inductive step. 
	Adapting Ritt's technique we will modify our order matrix to be in one of two standard forms where we can then reduce the first column via some Ritt division. One of our contributions is the observation that Ritt's reductions in these steps still hold provided that separants aren't vanishing on the component in question (Proposition~\ref{P:ritt-first} and Proposition~\ref{P:ritt-second}). 
\\
	
\noindent \emph{Proof of Main Claim for Ritt's First Form (non-degenerate)}: Suppose that the order matrix $A$ has $\tdet(A) = \sum_{i=1}^n a_{i,i}$. If $a_{1,1}$ is not the unique maximum of the first column and $\partial u_1/\partial \ell_{x_1} \not \in P$ (where $\ell_{x_1} = x_1^{(a_{1,1})}$), is non-vanishing, then we apply Ritt's first reduction (Proposition \ref{P:ritt-first}) and get am equivalent system $[v_1,\ldots,v_n]$ with order matrix $B\prec A$ and minimal prime ideal $P=\widetilde{P} \supset [v_1,\ldots,v_n]$.
\\

\noindent \emph{Proof of Main Claim for Ritt's Second Form (non-degenerate)}: If $a_{1,1}$ is the unique maximum, then we can rearrange so that the second largest element appears in the $(1,1)$-entry with $a_{1,1}$ now moving to the $(n,1)$-position. 
		If $\partial u_1/\partial \ell_x\notin P$ we can apply Ritt's second reduction (Proposition~\ref{P:ritt-second}) and get an equivalent system $[v_1,v_2,\ldots,v_n]$ order matrix $B\prec A$ with $P=\widetilde{P}\supset [v_1,v_2,\ldots,v_n]$. 
\\

\noindent \emph{Proof of Main Claim for Degenerate Situation (Overview)}:	Let $\Sigma_1 = V(P)$ and let $\eta_1$ be its generic point. 
		If we are in neither of the previous cases we are in a degenerate situation (Definition~\ref{D:degenerate})  and 
		 $$s_1=\partial u_1/\partial \ell_x \in P.$$
		Equivalently $s_1$ vanishes on $\eta_1$.
		Let $\Gamma \to \AA^1_{\infty}=\Spec(K\lbrace y \rbrace)$ be the Ritt pencil associated to this degenerate situation so that $\Gamma = \Spec(K\lbrace x_1,\ldots,x_n\rbrace/[t_1+ys_1,u_2,\ldots,u_n]$.
		
		By Proposition~\ref{P:basic-containments} we have 
		$$ \Sigma_1 \subset \BS(\Gamma) \subset \Sigma. $$
		Let $\Gamma_1$ be an irreducible component of $\Gamma$ containing $\Sigma_1$. 
		We will let $Q \subset K\lbrace x_1,\ldots,x_n,y\rbrace$ be the prime differential ideal such that $\Gamma_1 = V(Q)$. 
		
		Similarly, since $\Sigma_1 \subset \BS(\Gamma) \subset \Gamma_{\mu}$ for every $\mu$ we can let $\Gamma_{\mu,1}$ be an irreducible component of $\Gamma_{\mu}$ containing $\Gamma_1$. 
		We will let $\widetilde{P} \subset K\lbrace x_1,\ldots,x_n\rbrace$ be the prime differential ideal such that $\Gamma_{\mu,1} = V(\widetilde{P})$ and we may suppose that $\Gamma_{\mu,1} \subset \Gamma_1$ without loss of generality. 
		
		We can make the same reasoning algebraically: $P \owns u_1,s_1$ and hence $P \owns t_1$. 
		Since $\widetilde{P}$ is a minimal prime over $[t_1+\mu s_1,u_2,\ldots,u_n]$ we see that $[t_1+\mu s_1,u_2,\ldots,u_n] \subset P$ and hence can take $\widetilde{P} \subset P$. (We take $Q \subset P K\lbrace x_1,\ldots,x_n,y \rbrace$ by similar reasoning.)
\\
		
\noindent \emph{Proof of Main Claim for Degenerate Situation (Fixed Case)}:
		Suppose that $\Gamma_{1}$ is a fixed component. 
			By theorem~\ref{T:fixed}, any component of $\Gamma_1 \in \Irr(\Gamma)$ which is fixed is equal to $\Sigma_1$. 
			This means that $\Gamma_{\mu,1}$ is fixed and equal to $\Sigma_1$ with $\widetilde{P}=P\supset [t_1+\mu s_1,u_2,\ldots,u_n]$ with order matrix $B\prec A$.
			\\
			
\noindent \emph{Proof of Main Claim for Degenerate Situation (Moving Case)}: 
Suppose that $\Gamma_1$ is a moving component.
			In Theorem~\ref{T:moving-dimension} we proved that $\dim^{\partial}(\Gamma_1)=1$, assuming the Dimension Conjecture. 
			In Theorem~\ref{T:moving}, (assuming Theorem~\ref{T:moving-dimension}) we showed that there exists some $\mu$ such that the any moving component of  $\Gamma_{\mu}$ contained in $\Gamma_1 \subset \Gamma \subset \AA^n_{\infty}\times \AA^1_{\infty}$ has finite absolute dimension with Jacobi number no greater than the Jacobi number of our original system.
			This is the part of the proof that gives a true deformation.
			There is some component $\Gamma_1$ with $\Gamma_{\mu,1} \supset \Sigma_1$, where $\dim \Gamma_{\mu,1}<\infty$, and equations given by $[t_1+\mu s_1,u_2,\ldots,u_n]$ with order matrix $B\prec A$.
\end{proof}

\providecommand{\bysame}{\leavevmode\hbox to3em{\hrulefill}\thinspace}
\providecommand{\MR}{\relax\ifhmode\unskip\space\fi MR }
\providecommand{\MRhref}[2]{%
  \href{http://www.ams.org/mathscinet-getitem?mr=#1}{#2}
}
\providecommand{\href}[2]{#2}

\end{document}